\newtheorem{theorem}{Theorem}
\newtheorem{proposition}[theorem]{Proposition}
\newtheorem{lemma}[theorem]{Lemma}
\newtheorem{corollary}[theorem]{Corollary}
\theoremstyle{definition}
\newtheorem{definition}[theorem]{Definition}
\newtheorem{remark}[theorem]{Remark}
\numberwithin{equation}{section}
\numberwithin{theorem}{section}
\newcommand{\BC}{\ensuremath{\mathbb {C}}\xspace}
\newcommand{\BF}{\ensuremath{\mathbb {F}}\xspace}
\newcommand{{\BG}}{\ensuremath{\mathbb {G}}\xspace}
\newcommand{{\BK}}{\ensuremath{\mathbb {K}}\xspace}
\newcommand{\BQ}{\ensuremath{\mathbb {Q}}\xspace}
\newcommand{\BR}{\ensuremath{\mathbb {R}}\xspace}
\newcommand{\BW}{\ensuremath{\mathbb {W}}\xspace}
\newcommand{\CA}{\ensuremath{\mathcal {A}}\xspace}
\newcommand{\CB}{\ensuremath{\mathcal {B}}\xspace}
\newcommand{\CG}{\ensuremath{\mathcal {G}}\xspace}
\newcommand{\CN}{\ensuremath{\mathcal {N}}\xspace}
\newcommand{\CO}{\ensuremath{\mathcal {O}}\xspace}
\newcommand{\CR}{\ensuremath{\mathcal {R}}\xspace}
\def\sc{{\rm sc}}
\def\der{{\rm der}}
\def\sc{{\rm sc}}
\def\brk{{\breve k}}
\def\COk{{\CO_{\brk}}}
\def\ov{\overline}
\def\i{^{-1}}
\title{Mackey}
\author{Zhihang Yu}
\address{The Second Affiliated Hospital of Chongqing Medical Univesity, Chongqing 400010, China}
\address{The National Center for Applied Mathematics in Chongqing, Chongqing 401331, China}
\email{yuzhihang@amss.ac.cn}
\begin{document}
\title{Generic Mackey Formula for Parahoric Lusztig Functors}
\maketitle
\begin{abstract}
    Parahoric Lusztig induction gives a broad class of virtual smooth representations of parahoric subgroups in a $p$-adic group,
    serving as a natural generalization of classical Lusztig induction to the $p$-adic setting. This construction has useful applications in the representation theory of $p$-adic groups. In this paper, we prove the Mackey formula for parahoric Lusztig induction in the generic case, which generalizes a classic result of Lusztig in 1976. As an application, we describe the irreducible decomposition of the parahoric Deligne–Lusztig representation in the case of elliptic torus.
\end{abstract}
\section{Introduction}
\subsection{Background and Motivations}
Let $G$ be a reductive group over $\overline{\mathbb{F}}_q$ with Frobenius $F$. The Deligne-Lusztig construction \cite{DL} produces virtual representations $R_{T,B}^G(\theta)$ of $G^F$ via $\ell$-adic cohomology, where $T$ is an $\mathbb{F}_q$-rational maximal torus and $\theta\in \widehat{T^{F}}$. These (virtual) representations, now known as Deligne-Lusztig representations, form the cornerstone of the  representation theory of finite reductive groups --- the celebrated Deligne-Lusztig theory.

The Deligne-Lusztig variety construction naturally extends to the $p$-adic setting, which yields parahoric Deligne-Lusztig varieties (deep level Deligne-Lusztig varieties). Their cohomology similarly gives rise to (depth $r$) virtual representations $R_{T,B,r}^{G}(\theta)$ for $G_r^F$, called parahoric Deligne-Lusztig representations (deep level Deligne-lusztig representations). Lusztig \cite{Lu79} first initiated their study in 1979. In the subsequent decades, it has been a long-standing program to study these deep level analogs. Parahoric Deligne-Lusztig representations can be used to provide a geometric realization of irreducible supercuspidal representations of $p$-adic groups, and have found numerous significant applications in the local Langlands correspondence. The interested reader may refer to the important works of \cite{BoyarchenkoW_16, Chan_siDL, CI_loopGLn, ChenS_17, ChenS_23, CO, ChanOi_25b, Nie, IvanovNie_24, IvanovNie_25} among many others in this flourishing field.

Let $G$ be a connected reductive algebraic group over $\overline{\mathbb{F}}_q$ with Frobenius endomorphism $F$, and $G^F$ the corresponding finite group of Lie type. We consider the Lusztig induction functors and the resulting virtual representations $R_{L,P}^G(\theta)$\cite{Lu76}, where $L$ is an $F$-stable Levi subgroup, $P$ a parabolic subgroup with Levi factor $L$, and $\theta$ a representation of $L^F$. 

A fundamental problem is to compute the inner products $$\big\langle R^G_{L,P}(\theta), R^G_{L',P'}(\theta')\big\rangle_{G^F}$$The conjectural Mackey formula takes the form:
\begin{equation}\label{Mackey}
^{\ast}R^G_{L,P}\circ R^G_{L',P'} (\theta)= \sum_{w\in L^F\backslash S(L,L')^F/L'^F} R^L_{L\cap\,^wL'}\circ\,^\ast R^{^wL'}_{L\cap\,^wL'}(^w\theta)
\end{equation}
where $S(L,L')=\{x\in G\mid L\cap\,^xL'\text{ contains a maximal torus}\}$ and the functor $^{\ast}R^G_{L,P}$ is defined as both the left and right adjoint to $R^G_{L,P}$. The strongest current results toward (\ref{Mackey}) come from Bonnafé-Michel and Taylor \cite{Comp,Taylor}. In particular, Lusztig \cite{Lu76} proved that when $\theta, \theta'$ satisfy certain regular conditions, the following equality holds:
\begin{equation}\label{Lu76}
\big\langle R^G_{L,P}(\theta), R^G_{L,P}(\theta')\big\rangle_{G^F} = \big\langle \theta, \theta'\big\rangle_{L^F}
\end{equation}

In the deep level (parahoric) setting the naive analogue of \eqref{Mackey} generally fails to hold. Nevertheless, the equality \eqref{Lu76} continues to hold when \(\theta\) is generic:
this was shown by Lusztig \cite{Lu04}, by Stasinski\cite{Sta09}, and by Chan--Ivanov \cite{CI21} when \(L\) and \(L'\) are tori,
and later extended by Chan \cite{Ch24} to cases where one of them is a Levi subgroup.
The case where both \(L\) and \(L'\) are arbitrary Levi subgroups remains considerably more difficult,
since some arguments valid for the torus case no longer apply in this more general setting.

In this paper, 
we generalize Lusztig's equality \ref{Lu76} to deep levels (see Theorem~\ref{introthm})  through a detailed analysis of the relevant cohomology.

\subsection{Main Result}
Let $k$ be a non-archimedean local field with residue field $\mathbb{F}_q$ of cardinality $q$ and  characteristic $p\neq 2$. Let $\brk$ denote the completion of a maximal unramified extension of $k$, and let $F$ be the Frobenius automorphism of $\brk/k$.

Let $G$ be a $k$-rational reductive group, and let $\mathbf{x}$ be a point in its Bruhat-Tits building over $k$. Let $T \subseteq G$ be a $k$-rational and $\brk$-split maximal torus such that $\mathbf{x}$ lies in its apartment over $\brk$. Let $V$ be the unipotent radical of a $\brk$-rational parabolic subgroup with Levi factor $L$ containing $T$. 

For any integer $r \geq 0$, we associate a group $L_r\subset G_r$ and an $\overline{\mathbb{F}}_q$-variety $X^G_{L,P,r}$
called a \emph{parahoric Lusztig variety}. The group $G_r^F \times L_r^F$ acts on $X^G_{L,P, r}$ via left and right multiplication, inducing an action on the $\ell$-adic cohomology  $H_c^i(X^G_{L,P,r}, \overline{\mathbb{Q}}_\ell)$, where $\ell \neq p$ is a fixed prime. 

Let $\psi \colon L_r^F \to \overline{\mathbb{Q}}_\ell^\times$ be a class function, we define the  \emph{parahoric Lusztig induction} as follows
\[ R_{L,P,r}^G(\psi) := \sum_{i} (-1)^i H_c^i(X^G_{L,P,r}, \overline{\mathbb{Q}}_\ell)\otimes_{L^F_r}\psi \]

This induction was first studied by Chan~\cite{Ch24}. 
It is a natural generalization of the classical Lusztig induction~\cite{Lu76}. 
When \(r=0\), this construction recovers the classical Lusztig induction.

We now state the main theorem.
\begin{theorem}\label{introthm}(Theorem~\ref{thm})
If $p$ is not a bad prime for $G$ and $p$ does not divide $|\pi_1(G_\der)|$. Then
$$\big\langle R^{G}_{L, P,r}(\psi'), R^{G}_{L, P,r}(\psi)\big\rangle_{G^F_r} = \big\langle \psi', \psi\big\rangle_{L^F_r}$$
where $\psi, \psi'$ are two $(L, G)$-generic \footnote{It is a nontriviality condition on the restriction of $\psi$ to $\mathrm{ker}(T^F_r\to T^F_{r-1})$. See Definition~\ref{generic}} representations of $L^F_r$, both of them are constituents of $R^{L}_{T, B\cap L,r}(\theta)$ and $\theta$ is an $(L,G)$-generic character of $T^F_r$.
\end{theorem}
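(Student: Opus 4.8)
The plan is to follow the Deligne--Lusztig template for computing inner products of induced representations \cite{DL,Lu76}, transported to the truncated group $G_r$, and to isolate the one geometric input that becomes available only when $\psi,\psi'$ are generic. First I would rewrite the pairing cohomologically. Using the adjunction between $R^{G}_{L,P,r}$ and $^{\ast}R^{G}_{L,P,r}$ (valid for these virtual functors because $^{\ast}R$ is realized by the cohomology of the transposed variety) together with a Künneth computation --- legitimate since $G^F_r$ acts freely on $X^G_{L,P,r}$ --- one gets
$$\big\langle R^{G}_{L, P,r}(\psi'), R^{G}_{L, P,r}(\psi)\big\rangle_{G^F_r}=\big\langle \psi',\ {}^{\ast}R^{G}_{L,P,r}R^{G}_{L,P,r}(\psi)\big\rangle_{L^F_r}=\sum_n(-1)^n\big\langle \psi',\ H^n_c(\mathbf Y)\otimes_{L^F_r}\psi\big\rangle_{L^F_r},$$
where $\mathbf Y:=X^G_{L,P,r}\times_{G^F_r}X^G_{L,P,r}$ carries its residual $L^F_r\times L^F_r$-action. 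So it suffices to understand the $(\psi'\boxtimes\psi^{\vee})$-isotypic part of $H^{\bullet}_c(\mathbf Y)$.

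Second, I would stratify $\mathbf Y$. As in the classical case one presents $\mathbf Y$ in terms of $G_r$, $V_r$ and $F(V_r)$, and then decomposes $G_r$ by the relative position of a pair of parabolics. Here one uses that $G_r$ is built from $G=G_0$ by a chain of extensions with vector-group kernels on which the root datum acts in the expected way --- this is where the hypotheses $p\neq 2$ and $p\nmid|\pi_1(G_\der)|$ first enter, guaranteeing the clean description of $\ker(T^F_r\to T^F_{r-1})$ and of the higher root subgroups. The Bruhat stratification of $G_r$ is thus governed by relative positions $w\in W_L\backslash W/W_L$ together with the pro-unipotent filtration, and it induces a filtration of $\mathbf Y$ by $(L^F_r\times L^F_r)$-stable locally closed pieces $\mathbf Y_w$. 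By additivity of the compactly supported Euler characteristic in the Grothendieck group of virtual $(L^F_r\times L^F_r)$-modules, $[H^{\bullet}_c(\mathbf Y)]=\sum_w[H^{\bullet}_c(\mathbf Y_w)]$, so the computation reduces to the contribution of each stratum.

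Third --- and this is the crux --- I would show that for every $w$ outside the identity double coset $W_L$, the $(\psi'\boxtimes\psi^{\vee})$-isotypic part of $H^{\bullet}_c(\mathbf Y_w)$ vanishes. The mechanism is the one behind Lusztig's 1976 argument and its higher analogues \cite{Lu04,CI21,Ch24}: one exhibits a nontrivial subgroup $\mathfrak h_w\subseteq\ker(T^F_r\to T^F_{r-1})$ acting on $\mathbf Y_w$ so that the induced action on cohomology is through $\psi|_{\mathfrak h_w}$ (equivalently $\psi'|_{\mathfrak h_w}$), while the action extends to a free action of a vector group on a space fibered over $\mathbf Y_w$; since by Definition~\ref{generic} the $(L,G)$-genericity of $\theta$ is exactly a non-triviality condition on $\theta|_{\ker(T^F_r\to T^F_{r-1})}$, engineered so that the stabilizer in $W$ of this restriction lies in $W_L$, the character $\psi|_{\mathfrak h_w}$ is nontrivial for $w\notin W_L$, and the Deligne--Lusztig vanishing lemma for actions of affine groups (\cite{DL}) forces the isotypic component to die. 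This covers uniformly both the case where $L\cap\,^wL$ is a proper subgroup of $L$ and the case where $w$ normalizes $L$ but moves $\theta$. In the torus case $\mathfrak h_w$ is essentially all of $\ker(T^F_r\to T^F_{r-1})$ and the relevant directions are visible at once; for $L$ a genuine Levi the subgroup $\mathfrak h_w$ must be extracted by a finer analysis of how $\mathbf Y_w$ fibers over the analogous variety for $L\cap\,^wL$, tracking which depth-$r$ root directions survive in the fibration and checking that the bilinear pairings between opposite root spaces governing the $\mathfrak h_w$-action remain nondegenerate (again using $p\neq 2$). I expect this step to be the main obstacle, and it is precisely the technical heart the paper resolves.

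Finally, the surviving stratum is the identity double coset $w\in W_L$, i.e.\ the piece $\mathbf Y_1$, which up to the cohomological shift and twist is the graph of the identity --- equivalently, it realizes $R^L_{L,r}\circ{}^{\ast}R^L_{L,r}=\mathrm{id}_{L^F_r}$ --- so its contribution is exactly $\langle\psi',\psi\rangle_{L^F_r}$. Summing the strata yields $\big\langle R^{G}_{L, P,r}(\psi'), R^{G}_{L, P,r}(\psi)\big\rangle_{G^F_r}=\langle\psi',\psi\rangle_{L^F_r}$, which is Theorem~\ref{thm}.
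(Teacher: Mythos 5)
Your overall architecture matches the paper's: reduce the pairing to the $(\psi'\boxtimes\psi^{\vee})$-isotypic part of $H^{\bullet}_c$ of the Steinberg-type variety $\Sigma^{L}=X^{G}_{L,P,r}\times_{G^{F}_r}X^{G}_{L,P,r}$, stratify by relative position using a higher Bruhat decomposition of $G_r$, show the non-identity strata contribute nothing, and read off $\langle\psi',\psi\rangle_{L^{F}_r}$ from the identity stratum. Up to that point, and for the final identification $\Sigma''^{L}_1 \rightsquigarrow L^{F}_r$, you are in agreement with the paper.

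However, there is a genuine gap in the step you yourself flag as the obstacle, and the way you gesture at filling it is not the way the paper does it --- nor, as far as I can tell, a way that works. You propose to find, for each $w$ outside $W_L$, a subgroup $\mathfrak h_w\subseteq\ker(T^F_r\to T^F_{r-1})$ acting on $\mathbf Y_w$ through $\psi$, and to kill the stratum by the Deligne--Lusztig vanishing lemma using genericity of $\psi$ alone. That is how the vanishing works for the pro-unipotent directions (the paper's Proposition~\ref{vansLL}, which treats the part of the stratum where the $K_w$-coordinate is nontrivial, i.e.\ the deviation from $G_0$), but for the finite Weyl direction, i.e.\ for the piece $\Sigma''^{L}_w$ lying over the ordinary Bruhat cell of $G_0$ with $w\notin W_L$, no such $\mathfrak h_w$ inside the depth-$r$ filtration subgroup need exist: the relevant torus acting on the remaining stratum lives in $L_0$, not in the depth-$r$ kernel, and $\psi$-genericity says nothing about it. This is exactly the reason the theorem is stated with the extra hypothesis that $\psi,\psi'$ are constituents of $R^{L}_{T,B\cap L,r}(\theta)$, a hypothesis your proof never uses. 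The paper handles this by a bridge to the torus case: Proposition~\ref{composeTLL} identifies $X^{L}_{T,B\cap L,r}\times_{T^{F}_r}\Sigma^{L}_w$ with a disjoint union of torus-Levi Steinberg varieties $\Sigma^{T}_{w'}$, and then the $\bar\theta\otimes(-)\otimes\psi$ isotypic part is killed by Chan's torus-Levi vanishing (Proposition~\ref{vansTL}) together with Proposition~\ref{vansw}, which uses the Kaletha--Yu characterization of $(L,G)$-genericity to force $\dot w'\in L_0$. In other words, the hard cells are killed not by a free unipotent action visible on $\Sigma^{L}_w$ itself but by composing with the variety $X^{L}_{T,B\cap L,r}$ and transferring the problem to where the torus-level genericity of $\theta$ can bite. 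Without this device (or something equivalent) your third step does not go through, and that is precisely the Levi--Levi difficulty the introduction says ``presents significant challenges even in the classical setting.'' You correctly anticipate where the difficulty is, but the proof you sketch for it is not a proof.
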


\begin{theorem}\label{introEllthm}(Theorem~\ref{Ellthm})
Let $p$ be as in Theorem~\ref{introthm}. Suppose $T$ is elliptic. Then:
$$\big\langle R^{G}_{L, P,r}(\psi'), R^{G}_{L, P,r}(\psi)\big\rangle_{G^F_r} = \big\langle \psi', \psi\big\rangle_{L^F_r}$$
where $\psi, \psi'$ are two constituents of $R^{L}_{T, B\cap L,r}(\theta)$ and $\theta$ is an $(L,G)$-generic character of $T^F_r$
\end{theorem}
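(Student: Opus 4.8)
The plan is to derive Theorem~\ref{introEllthm} from Theorem~\ref{introthm} by showing that ellipticity of $T$ makes the genericity hypothesis of the latter automatic. Precisely, I claim that if $\theta$ is $(L,G)$-generic and $T$ is elliptic, then every irreducible constituent of $R^L_{T,B\cap L,r}(\theta)$ is $(L,G)$-generic; granting this, the constituents $\psi,\psi'$ in the statement satisfy the hypotheses of Theorem~\ref{introthm} (under the same condition on $p$), which yields the asserted identity at once. Combined with the transitivity $R^G_{L,P,r}\circ R^L_{T,B\cap L,r}=R^G_{T,B,r}$, the resulting isometry on the span of the constituents of $R^L_{T,B\cap L,r}(\theta)$ moreover rewrites as an irreducible decomposition of the higher Deligne--Lusztig representation $R^G_{T,B,r}(\theta)$ for elliptic $T$, which is the advertised application.

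So the real task is to propagate $(L,G)$-genericity from $\theta$ to each constituent $\psi$ of $R^L_{T,B\cap L,r}(\theta)$. By Definition~\ref{generic}, $(L,G)$-genericity of $\psi$ is a non-degeneracy condition on the characters of the deepest layer $T^{F,+}_r:=\ker(T^F_r\to T^F_{r-1})$ that occur in the restriction of $\psi$ to $T^{F,+}_r$, expressed through their pairings with the coroots $\alpha^\vee$ for $\alpha\in\Phi(G)\smallsetminus\Phi(L)$. For $r\ge 1$, $T^{F,+}_r$ sits inside the abelian layer $L^{F,+}_r:=\ker(L^F_r\to L^F_{r-1})\cong\mathfrak l^F=(\mathfrak t\oplus\bigoplus_{\alpha\in\Phi(L)}\mathfrak l_\alpha)^F$, and I would compute, via a Lefschetz fixed-point argument on the higher Deligne--Lusztig variety, that the characters of $L^{F,+}_r$ occurring in $R^L_{T,B\cap L,r}(\theta)$ all lie on the single adjoint orbit of the character inflated from $\theta|_{T^{F,+}_r}$ along the projection $L^{F,+}_r\to\mathfrak t^F$. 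Since an adjoint orbit in $\mathfrak l^F$ meets $\mathfrak t^F$ in exactly one $W(L,T)$-orbit (Chevalley restriction, using the $p$-hypotheses to identify characters with elements of the Lie algebra), the characters of $T^{F,+}_r$ occurring in the restriction of $\psi$ are all of the form $w\cdot(\theta|_{T^{F,+}_r})$ with $w\in W(L,T)$. Finally, $(L,G)$-genericity is $W(L,T)$-invariant: $W(L,T)$ preserves $\Phi(L)\subset\Phi(G)$, hence permutes $\Phi(G)\smallsetminus\Phi(L)$ together with the associated coroots, so each $w\cdot(\theta|_{T^{F,+}_r})$ is $(L,G)$-generic, and therefore $\psi$ is $(L,G)$-generic. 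For $r=0$ one argues instead within classical Deligne--Lusztig theory: ellipticity makes $\pm R^L_{T,B\cap L,0}(\theta)$ a sum of cuspidal constituents, $(L,G)$-genericity of $\theta$ forces $W(G,T)^F_\theta\subseteq W(L,T)^F$, and the isometry is then Lusztig's \eqref{Lu76} via transitivity.

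The main obstacle is the layer-$r$ computation of the previous paragraph, and this is precisely where ellipticity enters. For a non-elliptic maximal torus the higher Deligne--Lusztig variety carries positive-dimensional split directions, and its cohomology can acquire constituents whose deepest-layer restriction is no longer confined to the single adjoint orbit attached to $\theta$ --- such constituents may in fact also arise by higher induction from proper sub-Levi subgroups --- so genericity does not propagate. When $T$ is elliptic in $G$ it is elliptic in $L$ as well, and one may invoke the cleanness and irreducibility results for higher Deligne--Lusztig representations of elliptic maximal tori with generic characters (as in \cite{CI21}, \cite{Ch24}, or via a direct cohomological vanishing argument) to determine $R^L_{T,B\cap L,r}(\theta)$ up to sign and to force its layer-$r$ restriction onto the claimed adjoint orbit. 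Making this analysis precise and uniform in $r$ --- in particular checking that the characters excluded by $(L,G)$-genericity of $\theta$ cannot reappear among the $N_L(T)^F$-translates --- is the crux of the argument.
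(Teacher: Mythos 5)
Your reduction to Theorem~\ref{introthm} is exactly the paper's strategy: the whole content is the claim that ellipticity of $T$ forces every constituent of $R^L_{T,B\cap L,r}(\theta)$ to be $(L,G)$-generic, which is the paper's Corollary~\ref{ellgeneric}. You correctly identify that this is the crux, and correctly sense that ellipticity is essential. But your proposed proof of the crux is left genuinely incomplete, and that is where the gap lies. The Lefschetz fixed-point computation pinning down the characters of $\ker(L^F_r\to L^F_{r-1})$ that occur in the cohomology is not a routine matter --- you would have to control all of $H^*_c(X^L_{T,B\cap L,r})$ as an $L^F_r\times T^F_r$-module on the deepest layer, which is close to re-deriving the very structural results you then cite vaguely ("cleanness and irreducibility results\ldots or a direct cohomological vanishing argument''). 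The subsequent Chevalley-restriction step is in fact unnecessary overhead: as you would discover if you pushed the computation through, the deepest-layer character is not merely confined to a single $W(L,T)$-orbit, it is a single character (it extends to a character of $L^F_r$), so no $W$-invariance discussion is needed.

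The paper's proof of Corollary~\ref{ellgeneric} is a short algebraic argument that sidesteps all of this. Since $\theta$ is $(L,G)$-generic, Lemma~\ref{lemma1} (Kaletha's Lemma 3.6.9) produces a character $\phi$ of $L^F_r$, trivial on $L^F_{\sc,r}$, with $\phi|_{\mathfrak t^F}=\theta|_{\mathfrak t^F}$. Then $\theta':=\theta\otimes(\phi|_{T^F_r})^{-1}$ has depth $r'<r$, and one uses Proposition~\ref{Re} (compatibility of $R$ with tensoring by a character trivial on the simply connected part) together with Theorem~\ref{Ell} (the elliptic inflation property from \cite[\S 5]{Ch24}) to get
\[
R^L_{T,B\cap L,r}(\theta)=R^L_{T,B\cap L,r}(\theta')\otimes\phi=\mathrm{Inf}^{L^F_r}_{L^F_{r'}}R^L_{T,B\cap L,r'}(\theta')\otimes\phi.
\]
Every constituent of the right-hand side therefore restricts to the deepest layer $\mathfrak l^F$ as a multiple of $\phi$, hence is $(L,G)$-generic by definition. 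This is precisely where ellipticity enters --- not through a delicate geometric analysis of the variety, but through the single clean input Theorem~\ref{Ell}. If you want to salvage your own approach, the cleanest path is to recognize that the "cohomological vanishing argument'' you gesture at \emph{is} Theorem~\ref{Ell}, and to replace the Lefschetz/Chevalley machinery with the $\phi$-twisting trick above, which makes the deepest-layer behavior manifest rather than something to be extracted from a trace formula.
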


\begin{remark}
In Theorem~\ref{introEllthm}, the hypotheses can be weakened to require that $\psi$ (resp.\ $\psi'$)  appears as a consitituent in $R^{G}_{T,B\cap L,r}(\theta)$ (resp.\ $R^{G}_{T',B'\cap L,r}(\theta')$). Here $\theta$ is $(L,G)$-generic, and $(T,\theta)$ is geometrically conjugate to $(T',\theta')$ in the sense of \cite{DL}. And our argument in this paper still holds. This generalizes a result in \cite{Lu76} to deep level cases.
\end{remark}

\begin{remark}A prime is bad for $G$ if it is bad for some irreducible component of its absolute root system. We list the bad primes for irreducible root system:
\[
\begin{array}{c|c|c}
\text{\(B_n,\, C_n,\, D_n\)} & \text{\(E_6,\, E_7,\, F_4,\, G_2\)} & \text{\(E_8\)} \\
\hline
2 & 2,\,3 & 2,\,3,\,5
\end{array}
\]
\end{remark}

\subsection{Application: Irreducible decomposition of Parahoric Lusztig representations}
we invoke a result by Howe \cite{Howe} and Kaletha \cite{Kal} on Howe factorizations of smooth characters $\theta: T^F \to \ov \BQ_\ell^\times$ of depth $r$, which establishes that if $p$ is not a bad prime for $G$ then there is a datum \[(G^i, \phi_i, r_i)_{0 \le i \le d},\] where $T \subseteq G^0 \subsetneq \cdots \subsetneq G^d = G$ are $k$-rational Levi subgroups of $G$ and each $\phi_i: (G^i)^F \to \ov \BQ_\ell^\times$ is an $(L,G)-$generic character of depth $r_i$, such that \[\phi_{-1} := \theta \prod_{i=0}^d \phi^{-1}_i |_{T^F}\] is a character (of $T^F$) of depth $0$.
 
\begin{theorem}\label{Decomp}(Theorem~\ref{decomp})
 Let $p$ be as in Theorem~\ref{introthm}, suppose $T$ is elliptic. Then 
    $$R^G_{T,B,r}(\theta)=R^G_{G^0,P^0,r}(R^{G^0}_{T,B\cap G^0,0}(\phi_{-1})\otimes_{(G^0_r)^F}\theta^{\geq 0})=\sum_{\rho} m_{\rho}R^G_{G^0,P^0,r}(\rho\otimes_{(G^0_r)^F}\theta^{\geq 0})$$
    where $\theta^{\geq 0}=\prod_{i=0}^d\phi^{}_i |_{(G^0_r)^F}$ and $R_{T, B\cap G^0, 0}^{G^0}(\phi_{-1})$ is a classical Deligne-Lusztig representation for the reductive quotient of $(G^0_r)^F$, $\rho$ ranges over its irreducible summands with multiplicitiy $m_\rho$.
    
    Moreover, the summands $R^G_{G^0,P^0,r}(\rho\otimes_{(G^0_r)^F}\theta^{\geq 0})$ are pairwise non-isomorphic irreducible representations of $G^F_r$.
\end{theorem}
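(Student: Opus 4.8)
The plan is to reduce the whole statement to the generic Mackey formula of Theorem~\ref{introEllthm}; the two displayed equalities are then formal consequences of the standard functoriality of higher Lusztig induction. First I would prove the equalities. Transitivity of higher Lusztig induction along $T\subseteq G^0\subseteq G$ (with $B\subseteq P^0$) gives $R^G_{T,B,r}=R^G_{G^0,P^0,r}\circ R^{G^0}_{T,B\cap G^0,r}$. Next, since $\theta=\phi_{-1}\cdot(\theta^{>0}|_{T^F_r})$ with $\theta^{>0}=\prod_{i=0}^d\phi_i|_{G^{0,F}_r}$ a genuine character of $G^{0,F}_r$, the compatibility of $R^{G^0}_{T,B\cap G^0,r}(-)$ with twisting by a character of $G^{0,F}_r$ (a projection formula for the left $G^{0,F}_r$-action on $X^{G^0}_{T,B\cap G^0,r}$) yields
\[
R^{G^0}_{T,B\cap G^0,r}(\theta)=R^{G^0}_{T,B\cap G^0,r}(\phi_{-1})\otimes_{G^{0,F}_r}\theta^{>0}.
\]
Finally, since $\phi_{-1}$ has depth $0$, its isotypic summand in $H^{*}_c(X^{G^0}_{T,B\cap G^0,r})$ coincides, up to a twist by an affine-space bundle, with the cohomology of the classical Deligne--Lusztig variety of the reductive quotient of $G^{0,F}_r$; hence $R^{G^0}_{T,B\cap G^0,r}(\phi_{-1})$ is the inflation of the classical Deligne--Lusztig representation $R^{G^0}_{T,B\cap G^0,0}(\phi_{-1})$. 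Combining the three identities gives the first equality, and writing $R^{G^0}_{T,B\cap G^0,0}(\phi_{-1})=\sum_\rho m_\rho\rho$ as a $\BZ$-linear combination of irreducibles of the reductive quotient and invoking additivity of $R^G_{G^0,P^0,r}(-)$ and of $(-)\otimes_{G^{0,F}_r}\theta^{>0}$ gives the second.

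For the ``moreover'' I would apply Theorem~\ref{introEllthm} with $L=G^0$ (legitimate since $T$ is elliptic). By the first equality each $\rho\otimes_{G^{0,F}_r}\theta^{>0}$ is a constituent of $R^{G^0}_{T,B\cap G^0,r}(\theta)$, and the Howe factorization is set up so that $\theta$ is $(G^0,G)$-generic in the sense of Definition~\ref{generic} (this is precisely the force of the genericity of the $\phi_i$, in particular $\phi_0$). The inner product formula then gives, for irreducibles $\rho,\rho'$ occurring,
\[
\big\langle R^{G}_{G^0,P^0,r}(\rho\otimes_{G^{0,F}_r}\theta^{>0}),\,R^{G}_{G^0,P^0,r}(\rho'\otimes_{G^{0,F}_r}\theta^{>0})\big\rangle_{G^F_r}=\big\langle \rho\otimes\theta^{>0},\,\rho'\otimes\theta^{>0}\big\rangle_{G^{0,F}_r}=\langle\rho,\rho'\rangle=\delta_{\rho,\rho'},
\]
the middle equality because $\theta^{>0}$ is a linear character and inflation preserves inner products. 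Hence each $R^{G}_{G^0,P^0,r}(\rho\otimes_{G^{0,F}_r}\theta^{>0})$ has norm $1$, so is $\pm$ an irreducible representation, and distinct $\rho$ yield orthogonal, hence non-isomorphic, ones. To promote ``$\pm$ irreducible'' to ``irreducible'' I would use the cohomological concentration available in the elliptic situation: the pertinent isotypic cohomology of $X^G_{G^0,P^0,r}$ is concentrated in a single degree, and a sign/dimension count shows the resulting virtual representation is effective, so it is a genuine irreducible representation of $G^F_r$.

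The hard part will be the two inputs used as black boxes: the verification that the character produced by the Howe factorization satisfies exactly the genericity hypothesis of Theorem~\ref{introEllthm}, and the concentration/effectivity statement used to fix the sign. If only a weaker genericity is available at the $G^0$-level, I would instead induct on the length $d$ of the Howe factorization: the base case $G^0=G$ is the classical Deligne--Lusztig decomposition for the reductive quotient, and the inductive step factors $R^G_{G^0,P^0,r}=R^G_{G^{d-1},P^{d-1},r}\circ R^{G^{d-1}}_{G^0,P^0\cap G^{d-1},r}$, uses the inductive hypothesis inside $G^{d-1}$ to identify $R^{G^{d-1}}_{G^0,P^0\cap G^{d-1},r}(\rho\otimes_{G^{0,F}_r}\theta^{>0})$ with a single irreducible of $G^{d-1,F}_r$ occurring in $R^{G^{d-1}}_{T,B\cap G^{d-1},r}(\theta)$, and then applies Theorem~\ref{introthm} (or Theorem~\ref{introEllthm}) to the pair $(G^{d-1},G)$, for which the top generic twist of the Howe factorization supplies the required genericity. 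Apart from these points the argument is purely formal given the main theorems; the substantive content lives entirely in the Mackey formula.
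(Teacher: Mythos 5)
Your proof of the two displayed equalities is sound and uses the same ingredients as the paper (transitivity, Proposition~\ref{Re}, and Theorem~\ref{Ell} with $s=0$), merely collapsing the paper's iterated $\Psi_i$ construction into one pass. The issue is in the ``moreover'' part.

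Your primary plan --- apply Theorem~\ref{introEllthm} once with $L=G^0$ at depth $r$ --- does not work, because $\theta$ is \emph{not} $(G^0,G)$-generic at depth $r$. By Definition~\ref{generic}(5) this would require $\theta|_{\mathfrak t^F}$ (the depth-$r$ piece) to be the restriction of an $(G^0,G)$-generic character, i.e.\ to correspond to an $X\in\mathfrak z^*_{G^0}$ satisfying GE1. But in the Howe factorization the depth-$r$ piece of $\theta$ is $\phi_d|_{\mathfrak t^F}$, and $\phi_d$ is a character of $G(k)$ trivial on $G_\sc(k)$, so the corresponding element lies in $\mathfrak z^*_G$ and pairs to zero with every coroot $H_\alpha$; GE1 fails for every proper $G^0$. (Even when $\phi_d$ is trivial, so that $\theta$ has depth $r_{d-1}$, the top piece $\phi_{d-1}$ is only $(G^{d-1},G^d)$-generic, hence $(G^0,G)$-generic only if $d=1$.) This is precisely the obstruction you anticipated; the answer is that the weaker genericity is the \emph{only} one available, so the direct application has to be replaced by the inductive one.

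Your fallback --- inducting on $d$ by factoring $R^G_{G^0,P^0,r}=R^G_{G^{d-1},P^{d-1},r}\circ R^{G^{d-1}}_{G^0,P^0\cap G^{d-1},r}$ --- is the paper's approach (packaged there as the composites $\Psi_d\circ\cdots\circ\Psi_0$), but as written it still has the same gap at the top step: you cannot apply Theorem~\ref{introEllthm} to the pair $(G^{d-1},G)$ with $\theta$ at depth $r$, for the same reason as above. What is needed, and what the $\Psi_i$ functors carry, is an extra twist-and-deflate at each level: at the step $(G^{i},G^{i+1})$ one first strips off $\phi_{i+1},\dots,\phi_d$ with Proposition~\ref{Re}, then uses Theorem~\ref{Ell} to descend to depth $r_i$, and only then is the accumulated character $\theta_i=\prod_{j\le i}\phi_j|_{T^F_{r_i}}$ $(G^i,G^{i+1})$-generic of depth $r_i$, so that the Mackey formula applies. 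Since twisting by a linear character and inflation along $G^F_{r_{i+1}}\twoheadrightarrow G^F_{r_i}$ both preserve inner products and irreducibility, the inner-product identity $\langle\Psi(\rho),\Psi(\rho')\rangle_{G^F_r}=\langle\rho,\rho'\rangle_{G^{0,F}_{r_0}}$ follows by composing these steps, which is how the paper concludes. Your observation about the sign (norm one only gives $\pm$irreducibility) is a genuine subtlety which the paper's argument also elides; the concentration/degree-count you sketch is the right way to address it.
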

\begin{remark}
    
    In \cite[Theorem 7.5]{Nie} the author also obtains the irreucible decomposition for elliptic parahoric Deligne-Lusztig representations. His approach is quite different from ours, which is based on an alternative interpretation of parahoric Deligne–Lusztig representations . Our work provides a new proof within the original framework of parahoric Deligne-Lusztig representations. 
    
    Furthermore, our approaches also show that Theorem~\ref{Decomp} remains valid even when $(T,\theta)$ is "split-generic" in the sense of \cite[Definition 6.1]{Ch24}. 
\end{remark}
\subsection{Structure of the paper}
The paper is organized as follows. In \S\ref{2}, we present some essential properties of parahoric Lusztig induction. In \S\ref{3}, we introduce the key variety $\Sigma^M$, decompose it into strata, and analyze the cohomology of each stratum. Combining these results, we establish our main result Theorem~\ref{thm}. In \S\ref{4}, we decompose elliptic parahoric Deligne-Lusztig representation $R^G_{T,B,r}(\theta)$ in to irreducible representations and finishes the proof of Theorem~\ref{Decomp}.

\subsection*{Acknowledgement}
We would like to thank Sian Nie for helpful discussions and encouragement. We are also grateful to Pengcheng Li for fruitful discussions. We also wish to acknowledge our context reliance on key results from \cite{Ch24}, which play an esssential role in our computation.

\subsection*{Conventions and notation}
Let $k$ be a non-archimedean field with finite residue field $\BF_q$ of cardinality $q$ and  characteristic $p \neq 2$. Let $\brk$ be the completion of a maximal unramified extension of $k$. Denote by $\CO_k$ and $\COk$ the integer rings of $k$ and $\brk$ respectively. Fix a uniformizer $\varpi \in \CO_k$. Let $F$ be the Frobenius automorphism of $\brk$ over $k$.

Let $G$ be a connected reductive group splitting over $\brk$. We write $Z(G)$ for the center of $G$, $G_\der$ for the derived subgroup $G$, and $G_\sc$ for the simply connected covering of $G_\der$.
Let $\mathscr{B}(G,k)$ denote the (enlarged) Bruhat-Tits building of a reductive group $G$ over $k$. The Bruhat-Tits theory associates to each point $\mathbf x \in \mathscr{B}(G,k)$:

\begin{itemize}
    \item A connected parahoric $\mathcal{O}_k$ model $\mathcal{G}_{\mathbf x}$ of G
    \item The Moy-Prasad filtration subgroups $\mathcal{G}_{\mathbf x}^r$ for $r \in \widetilde{\mathbb{R}}_{\geq 0}$, where 
    \[
    \widetilde{\mathbb{R}} = \mathbb{R} \sqcup \{r+ \mid r \in \mathbb{R}\}
    \]
    is equipped with the ordering $s < s+ < r$ for any real numbers $s < r$
\end{itemize}

This induces corresponding filtrations on the Lie algebra:
\begin{itemize}
    \item $\mathrm{Lie}(G)^r_\mathbf{x}$ (stable under the adjoint action of $\mathcal{G}_\mathbf{x}^0$)
    \item $\mathrm{Lie}^*(G)^r_\mathbf{x}$ (stable under the coadjoint action of $\mathcal{G}_\mathbf{x}^0$)
\end{itemize}

For $s \le r \in \widetilde\BR_{\ge 0}$ we denote by $G_r^s$ the $\BF_q$-rational smooth affine group scheme, which represents the perfection of the functor \[R \mapsto \CG_\mathbf{x}^s(\BW(R)) / \CG_\mathbf{x}^{r+}(\BW(R)),\] where $R$ is a $\BF_q$-algebras, and $\BW(R)$ is the Witt ring of $R$ if ${\rm char}~k = 0$ and $\BW(R) = R[[\varpi]]$ otherwise. 

Let $H \subseteq G$ be a closed $\brk$-rational subgroup. We denote by $H_r^s \subseteq G_r^s$ the closed subgroup defined in \cite[\S 2.6]{CI21}. If $H$ is $k$-rational, the (geometric) Frobenius $F$ acts naturally on $H_r$. By abuse of notation, we will write $H = H(\brk)$, $G_{\mathbf x}^s = \CG_{\mathbf x}^s(\COk) \subseteq G$ and $H_r = H_r(\ov\BF_q)$. In particular, $G^F = G(k)$ and $G_r^F = G_r(\BF_q) = G_{\mathbf x}^F / (G_\mathbf{x}^{r+})^F$.

Let $T \subseteq G$ be a $k$-rational maximal torus splitting over $\brk$. We denote by $\CA(T, \brk)$ the apartment of $T$ inside the Bruhat-Tits building $\CB(G, \brk)$ of $G$ over $\brk$. Write $\Phi(G, T)$ for the root system of $T$ in $G$ over $\brk$.

All representations of groups in this paper are taken with coefficients in $\ov\BQ_\ell$ (or equivalently $\mathbb{C}$, see Proposition~\ref{equiv}), where $\ell \neq p$ is a distinct prime number. Let $H$ be an abstract group. For two subgroups $H_1, H_2 \subseteq H$, let $[H_1, H_2]$ denote the subgroup generated by the commutators $[H_1, H_2] := h_1 h_2 h_1^{-1} h_2^{-1}$ for all $h_1 \in H_1$ and $h_2 \in H_2$. For $h,k \in H$, we write ${}^h k = h k h^{-1}$. If $H$ is finite, we write $\CR(H)$ as the Grothendick group of $H$-module. Let $H'$ be a quotient group of $H$, the inflation functor is denoted by $\mathrm{Inf}^{H}_{H'}$. For a right \(H\)-module \(V\) and a left \(H\)-module \(W\), we write 
\(V \otimes_H W\) for the tensor product 
$
 V \otimes_{\overline{\mathbb{Q}}_\ell[H]} W.$

\section{Parahoric Lusztig Induction}\label{2}
Let $G$ be a connected $k$-rational reductive group over $\brk$ and $T\subset G$ a split torus. Throughout out the paper, we make the following assumption \[\tag{*} \text{$p$ is not a bad prime and $p$ does not divide $|\pi_1(G_\der)|$.}\] Moreover, we fix a point ${\mathbf x} \in \CA(T, \brk)$.
\subsection{Definitions}
\begin{definition}
Let $L \subseteq G$ be a $k$-rational Levi subgroup containing $T$. Let $P = L V \subseteq G$ be a parabolic subgroup with $V$ the unipotent radical and $L$ the Levi factor. Let $r \in \BR_{\ge 0}$. The associated parahoric Lusztig variety is defined by \[ X^G_{L, P, r}  = \{x \in G_r; x\i F(x) \in F V_r\}.\]
Note that there is a natural action of $G_r^F \times L_r^F$ on $X_{L, P, r}$ given by $$(g, l): x \mapsto g x l.$$
\end{definition}

\begin{definition}
We define the parahoric Luszitg induction functor
$$R_{L, P, r}^G:\CR(L^F_r)\to \CR(G^F_r)$$
by
$$R_{L, P, r}^G(\psi)= H_c^*(X^G_{L, P, r}, \ov \BQ_\ell)\otimes_{L^F_r}\psi,\quad \psi\in \CR(L^F_r)$$
\end{definition}

\subsection{Properties}
We show some known properties of parahoric Luszitg induction functors.

\begin{proposition}\label{trans}\cite[\S 3.1]{Ch24}
Let $ Q\subset  P$ be two parabolic subgroup of $ G$, and let $ M\subset  L$ be $k$-rational Levi subgroup of $ Q,  P$. We have an ismorphism
    $$X^{G}_{L, P, r}\times_{L^{F}_r}X^{L}_{M, Q\cap L,r}\cong X^{G}_{M, Q,r}$$
    $$(v, u)\mapsto vu$$
\end{proposition}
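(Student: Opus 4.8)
The proof I would give is a direct verification that the map $(v,u)\mapsto vu$ is a well-defined isomorphism of $\overline{\mathbb F}_q$-varieties, carried out at the level of $\overline{\mathbb F}_q$-points and compatible with the $\mathbb F_q$-structures so that it intertwines the relevant Frobenii and group actions. First I would unwind the definitions: $X^G_{M,Q,r}=\{x\in G_r : x^{-1}F(x)\in F U_{Q,r}\}$ where $U_Q$ is the unipotent radical of $Q$; $X^G_{L,P,r}=\{v\in G_r: v^{-1}F(v)\in F U_{P,r}\}$; and $X^L_{M,Q\cap L,r}=\{u\in L_r: u^{-1}F(u)\in F U_{Q\cap L,r}\}$. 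The balanced product $X^G_{L,P,r}\times_{L^F_r}X^L_{M,Q\cap L,r}$ is the quotient of the product by the $L^F_r$-action $(v,u)\cdot\ell=(v\ell,\ell^{-1}u)$, so the product $vu\in G_r$ is $L^F_r$-invariant and the map is at least well-defined as a map of sets out of the quotient.

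The key algebraic input is the factorization $U_Q = U_P \cdot (U_Q\cap L)$ (as $\overline{k}$-varieties, with unique decomposition), which holds because $Q\subseteq P$ forces $U_P\subseteq U_Q$ and $U_Q\cap L$ is the unipotent radical of the parabolic $Q\cap L$ of $L$; passing to Moy–Prasad quotients this gives $U_{Q,r}=U_{P,r}\cdot U_{Q\cap L,r}$ with uniqueness of the factorization (this uses assumption $(*)$ to ensure the relevant group schemes are smooth with the expected product structure, as in \cite{CI21}). Granting this, for $v\in X^G_{L,P,r}$ and $u\in X^L_{M,Q\cap L,r}$ one computes
\[
(vu)^{-1}F(vu)=u^{-1}\bigl(v^{-1}F(v)\bigr)F(u)=u^{-1}\cdot F n_P\cdot F(u)
\]
for some $n_P\in U_{P,r}$, and since $u\in L_r$ normalizes $U_{P,r}$ (as $L$ normalizes $U_P$) this rewrites as $\bigl({}^{u^{-1}}(Fn_P)\bigr)\cdot u^{-1}F(u)\in F U_{P,r}\cdot F U_{Q\cap L,r}=F U_{Q,r}$; hence $vu\in X^G_{M,Q,r}$ and the map lands in the right place.

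For the inverse, given $x\in X^G_{M,Q,r}$, I would produce $v\in X^G_{L,P,r}$ and $u\in X^L_{M,Q\cap L,r}$ with $x=vu$: write $x=vu$ via the Levi decomposition-type coordinates making $v^{-1}F(v)\in FU_{P,r}$, which amounts to solving a Lang-type equation inside the quotient — concretely, using $U_{Q,r}=U_{P,r}U_{Q\cap L,r}$ one peels off the $U_{P,r}$-part to define $v$ and checks the leftover lies in $X^L_{M,Q\cap L,r}$; uniqueness of the factorization shows this is inverse to $(v,u)\mapsto vu$ and that the pair is well-defined up to the $L^F_r$-action, i.e. gives a well-defined point of the balanced product. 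Finally I would note both maps are morphisms of varieties (they are built from multiplication and the fixed decomposition morphisms, all algebraic) and are $G^F_r$-equivariant on the left and $M^F_r$-equivariant on the right, compatibly with $F$. The main obstacle is the middle step: checking that the $U_{P,r}$-component can be split off $x^{-1}F(x)$ in a way that depends algebraically on $x$ and produces a genuine point of $X^L_{M,Q\cap L,r}$ — this is where the uniqueness of the factorization $U_{Q,r}=U_{P,r}U_{Q\cap L,r}$ and the normalization properties of $L_r$ acting on $U_{P,r}$ are essential, and where one must be careful that everything is compatible with the perfection and the Witt-vector functor of points; the rest is formal. Since this is exactly \cite[\S 3.1]{Ch24}, I would also simply cite that reference for the details and only sketch the computation above.
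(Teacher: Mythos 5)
The paper itself gives no proof of Proposition~\ref{trans}; it simply cites \cite[\S 3.1]{Ch24}, and your proposal also defers there, so on that level you match. Your forward computation is correct: for $v\in X^G_{L,P,r}$ and $u\in X^L_{M,Q\cap L,r}$ one has $(vu)^{-1}F(vu)={}^{u^{-1}}\!\bigl(v^{-1}F(v)\bigr)\cdot u^{-1}F(u)\in F(V_r)\cdot F\bigl((U_Q\cap L)_r\bigr)=F(U_{Q,r})$, using that the $k$-rational Levi $L$ normalizes $V$ (hence $L_r$ normalizes $F(V_r)$) and the semidirect-product factorization $U_Q=V\cdot(U_Q\cap L)$.

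One small imprecision: in the inverse direction you cannot literally ``peel off the $U_{P,r}$-part of $x^{-1}F(x)$ to define $v$.'' What the factorization gives you is the decomposition $x^{-1}F(x)=\alpha\beta$ with $\alpha\in F(V_r)$, $\beta\in F\bigl((U_Q\cap L)_r\bigr)$; this produces $\alpha,\beta$, not $v,u$. The correct next step is to apply Lang's theorem in the connected group $L_r$ to solve $u^{-1}F(u)=\beta$ for $u\in L_r$ (so $u\in X^L_{M,Q\cap L,r}$ by construction), and then set $v:=xu^{-1}$; the same conjugation computation as above shows $v^{-1}F(v)={}^{u}\alpha\in F(V_r)$, so $v\in X^G_{L,P,r}$. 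The choice of $u$ is unique up to $L^F_r$, which is exactly the ambiguity absorbed by the balanced product, and bijectivity together with perfection yields the isomorphism. With that clarification, your sketch is sound and aligns with the cited reference.
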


\begin{corollary}(Transitivity)
    Let $ Q\subset  P$ be two parabolic subgroup of $ G$, Let $ M\subset  L$ be $k$-rational Levi subgroup of $ Q,  P$. Then
    $$R^{G}_{L, V,r}\circ R^{L}_{M, L\cap Q,r}=R^{G}_{M, Q,r}$$
\end{corollary}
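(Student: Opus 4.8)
The plan is to deduce the stated identity of functors directly from the isomorphism of varieties in Proposition~\ref{trans} by passing to $\ell$-adic cohomology with compact support. The first step is to observe that $L_r^F$ acts freely on $X^G_{L,P,r}$: the variety $X^G_{L,P,r}$ is a locally closed subvariety of the (perfection of the) affine group $G_r$, stable under right multiplication by $L_r^F\subseteq G_r^F$, and right multiplication by a finite subgroup of a group on itself is always free. Hence the contracted product $X^G_{L,P,r}\times_{L_r^F}X^L_{M,Q\cap L,r}$ occurring in Proposition~\ref{trans} is a geometric quotient of a variety by a free action of a finite group, and $X^G_{M,Q,r}$ is identified with this quotient compatibly with the $G_r^F\times M_r^F$-actions (left multiplication by $G_r^F$ on the first factor, right multiplication by $M_r^F$ on the second).

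Next I would invoke the Künneth formula for a contracted product by a free finite group action: if a finite group $H$ acts freely on the right of a variety $Y$ and (commuting with all other actions) on the left of a variety $X$, then there is a natural isomorphism
\[
H_c^*(X\times_H Y,\ov\BQ_\ell)\;\cong\; H_c^*(X,\ov\BQ_\ell)\otimes_{\ov\BQ_\ell H}H_c^*(Y,\ov\BQ_\ell),
\]
equivariant for any further commuting actions on $X$ and $Y$. Since we work over a field of characteristic zero, $\ov\BQ_\ell H$ is semisimple, so this tensor product is exact, there are no higher $\mathrm{Tor}$ corrections, and the isomorphism holds degree by degree; in particular it descends to an identity in the Grothendieck group $\CR(G_r^F\times M_r^F)$. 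Applying this with $X=X^G_{L,P,r}$, $Y=X^L_{M,Q\cap L,r}$, $H=L_r^F$, and combining with Proposition~\ref{trans}, we obtain
\[
H_c^*(X^G_{M,Q,r},\ov\BQ_\ell)\;\cong\;H_c^*(X^G_{L,P,r},\ov\BQ_\ell)\otimes_{L_r^F}H_c^*(X^L_{M,Q\cap L,r},\ov\BQ_\ell)
\]
as virtual $G_r^F\times M_r^F$-modules.

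Finally, for any $\psi\in\CR(M_r^F)$ I would tensor both sides with $\psi$ over $M_r^F$ and use associativity of the tensor product (again automatic by semisimplicity of $\ov\BQ_\ell M_r^F$), unwinding the definitions of the induction functors to get
\[
R^G_{M,Q,r}(\psi)=H_c^*(X^G_{M,Q,r})\otimes_{M_r^F}\psi\cong H_c^*(X^G_{L,P,r})\otimes_{L_r^F}\bigl(H_c^*(X^L_{M,Q\cap L,r})\otimes_{M_r^F}\psi\bigr)=R^G_{L,V,r}\bigl(R^L_{M,Q\cap L,r}(\psi)\bigr).
\]
As $\psi$ is arbitrary this is the claimed equality of functors. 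The only non-formal ingredient is the Künneth formula for the contracted product with compact supports together with its equivariance; this is classical (it underlies the transitivity of ordinary Lusztig induction, cf.\ Digne--Michel) and in the parahoric setting is precisely the content of \cite[\S 3.1]{Ch24}, so I do not expect a real obstacle. The one point requiring care is verifying that all group actions in sight genuinely commute, so that the Künneth isomorphism is $G_r^F\times M_r^F$-equivariant, but this is immediate from the description of the actions by left and right multiplication.
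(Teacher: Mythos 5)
Your proof is correct and follows the route the paper intends: the corollary is stated as an immediate consequence of Proposition~\ref{trans}, and you have filled in the standard deduction — free right $L_r^F$-action on $X^G_{L,P,r}$, the Künneth formula for the contracted product over a finite group in characteristic-zero coefficients, and associativity of $\otimes$ — exactly as is done for ordinary Lusztig induction.
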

 
\begin{lemma}
   Let $n$ be an integer such that $F^n(V_r)=V_r$. $X^G_{L, P, r}$ is separated, and (the perfection of a) smooth scheme of finite type over $\BF_{q^n}$.
\end{lemma}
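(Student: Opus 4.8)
The plan is to exhibit $X^G_{L,P,r}$ as the inverse image, under a Lang-type map, of a closed subgroup, and then to transcribe the classical ($r=0$) Deligne--Lusztig argument to the present Moy--Prasad setting. Write $\CL\colon G_r\to G_r$ for the morphism $x\mapsto x\i F(x)$; by definition $X^G_{L,P,r}=\CL\i(FV_r)$, where $FV_r:=F(V_r)$ is the Frobenius twist of the unipotent radical $V_r$ (the subgroup written $FU_r$ in the definition), and it is closed in $G_r$ as the image of a closed subgroup under the finite morphism $F$. Recall that $G_r$ is, by construction, the perfection of a smooth affine group scheme of finite type over $\BF_q$, and that $V_r$ (hence $FV_r$) is the perfection of a smooth closed subgroup scheme. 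First I would fix finite-type models $\mathbf{G}_r$ of $G_r$ and $\mathbf{C}\subseteq\mathbf{G}_r$ of $FV_r$ on which $F$ is realised by (a twist of) the $q$-power Frobenius, and set $\mathbf{X}:=\CL\i(\mathbf{C})\subseteq\mathbf{G}_r$. Separatedness and finite type then come for free: $\mathbf{X}$ is a closed subscheme of the affine finite-type scheme $\mathbf{G}_r$, hence separated and of finite type, and both properties pass to the perfection $X^G_{L,P,r}$.

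For the smoothness assertion I would compute the differential of $\CL$ on $\mathbf{G}_r$. Since $F$ is a power of the absolute Frobenius, $dF=0$, so in the translation identifications $T_x\mathbf{G}_r\cong\mathrm{Lie}(\mathbf{G}_r)\cong T_{\CL(x)}\mathbf{G}_r$ the map $d\CL_x$ is simply $-\mathrm{id}$, an isomorphism at every point. Hence $\CL$ is étale (in fact a $\mathbf{G}_r^F$-torsor onto $\mathbf{G}_r$, by Lang's theorem, since $\mathbf{G}_r$ is connected), and so is its base change $\mathbf{X}\to\mathbf{C}$. As $\mathbf{C}$ is smooth of dimension $\dim V_r$, the scheme $\mathbf{X}$ is smooth of the same dimension, and passing to perfections shows that $X^G_{L,P,r}$ is the perfection of a smooth scheme of finite type.

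To descend the structure to $\BF_{q^n}$ I would invoke the hypothesis $F^n(V_r)=V_r$. The model $\mathbf{G}_r$ and the Lang map $\CL$ are defined over $\BF_q$, hence over $\BF_{q^n}$. From $F^n(V_r)=V_r$ we obtain $F^n(FV_r)=F\bigl(F^n(V_r)\bigr)=FV_r$, so the closed subgroup $FV_r$ is stable under the $q^n$-power Frobenius and therefore descends to a closed subscheme of $\mathbf{G}_r$ over $\BF_{q^n}$; replacing $\mathbf{C}$ by this model, the preimage $\mathbf{X}=\CL\i(\mathbf{C})$ is defined over $\BF_{q^n}$ because $\CL$ is an $\BF_{q^n}$-morphism, and the same holds after perfection. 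This gives the $\BF_{q^n}$-rational structure in all the assertions.

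The only genuinely delicate point is the bookkeeping involved in passing between $G_r$ and its finite-type model: one must know that the abstract Frobenius $F$ of $\brk/k$ is realised on $\mathbf{G}_r$ as a twist of the geometric Frobenius, so that the identity $dF=0$ makes sense and the descent along $F^n$ is legitimate. This is built into the constructions of \cite{Lu04} and \cite{CI21}, so I would cite it rather than reprove it; everything else is a direct transcription of the reductive case.
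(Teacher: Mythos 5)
Your proposal is correct and follows the same route as the paper, which simply cites \cite[\S 3.1]{CI21} and observes that $X^G_{L,P,r}$ is the preimage of $F(V_r)$ under the Lang map (a finite \'etale map) and that $V_r$ is the perfection of an affine space. You flesh out the details the paper delegates to the citation — passing to a finite-type model before computing $d\CL$, the descent to $\BF_{q^n}$ via $F^n(V_r)=V_r$ — but the underlying idea is identical.
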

\begin{proof}
    we follow the argument in \cite[\S 3.1]{CI21}. $X^G_{L, P, r}$ is the pullback under Lang's map $G^F_r\to G^F_r$,$g\mapsto g\i F(g)$ which is a finite etale map, and $V_r$ is isomorphic to (the perfection of) an affine space.
\end{proof}
\begin{proposition}\label{equiv}
    Let $\psi$ be a complex representation of $L^F_r$ and $\psi_{\ell, \iota}$ denote the  $\ov \BQ^\times_\ell$-valued representation obtained by composing $\psi$ with $\iota:\mathbb{C}^{\times}\cong \overline{\mathbb{Q}}_\ell^\times$. The representation $\iota R^{G}_{L, P,r}(\psi_{\ell, \iota})$ of $G^F_r$ is independent of the choice of $\ell$ and the isomorphism $\iota$.
\end{proposition}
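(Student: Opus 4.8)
The plan is to split the statement into a routine group-theoretic reduction and one genuine input from $\ell$-adic cohomology. Write $X := X^G_{L,P,r}$, and for $(g,l)\in G^F_r\times L^F_r$ set
\[
T(g,l)\;:=\;\sum_i(-1)^i\operatorname{Tr}\!\bigl((g,l)\mid H^i_c(X,\ov\BQ_\ell)\bigr),
\]
where $(g,l)$ acts as in the definition of $X^G_{L,P,r}$. Since $\ov\BQ_\ell[L^F_r]$ is semisimple, the standard character formula for $\otimes_{L^F_r}$ gives, for $\psi\in\CR(L^F_r)$ and $g\in G^F_r$,
\[
\chi_{R^G_{L,P,r}(\psi)}(g)\;=\;\frac{1}{|L^F_r|}\sum_{l\in L^F_r}T(g,l^{-1})\,\chi_\psi(l).
\]
Applying this to $\psi=\psi_{\ell,\iota}$, whose character is $\iota\circ\chi_\psi$, and transporting the result back along $\iota$ yields
\[
\iota^{-1}\chi_{R^G_{L,P,r}(\psi_{\ell,\iota})}(g)\;=\;\frac{1}{|L^F_r|}\sum_{l\in L^F_r}\iota^{-1}\!\bigl(T(g,l^{-1})\bigr)\,\chi_\psi(l).
\]
Since a virtual representation is determined by its character, it therefore suffices to prove that each $T(g,l)$ is a rational number independent of $\ell$ and $\iota$: then the right-hand side above depends only on the complex datum $\psi$, not on the choices.

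For the rationality and $\ell$-independence of $T(g,l)$ I would argue as in Deligne--Lusztig. By the preceding Lemma, $X$ admits a model $X_0$ over $\BF_{q^n}$ for any $n$ with $F^n(V_r)=V_r$; as $g$ and $l$ are $F$-fixed, $(g,l)$ is an automorphism of $X_0$ defined over $\BF_{q^n}$ and commuting with the geometric Frobenius $F^n$. Its action on each $H^i_c(X,\ov\BQ_\ell)$ has finite order, so the Grothendieck--Lefschetz trace formula applied to the endomorphism $(g,l)\circ F^{nm}$, for every $m\ge 1$, identifies $\sum_i(-1)^i\operatorname{Tr}\bigl((g,l)F^{nm}\mid H^i_c\bigr)$ with the number of $\ov\BF_q$-points of $X$ fixed by it — a nonnegative integer, visibly independent of $\ell$. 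Decomposing $H^*_c$ into the common generalized eigenspaces of the commuting operators $(g,l)$ (finite order, hence roots of unity as eigenvalues) and $F^n$ (Weil numbers as eigenvalues) and running the standard argument of \cite[Prop.~3.3]{DL} (see also the higher-level adaptations in \cite{CI21} and \cite{Ch24}) then shows that $T(g,l)=\sum_i(-1)^i\operatorname{Tr}((g,l)\mid H^i_c)$ is a rational integer, independent of $\ell$; independence of $\iota$ is then automatic, a rational number being fixed by $\iota^{-1}$ (equivalently, $H^i_c(X,\ov\BQ_\ell)$ already has a $\BQ_\ell$-rational structure as a $\BQ_\ell[G^F_r\times L^F_r]$-module).

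The one substantial point is precisely this rationality/$\ell$-independence of $T(g,l)$. For the classical functor ($r=0$) it is literally \cite[Prop.~3.3]{DL}; in the present setting one must check that the argument goes through for the higher Deligne--Lusztig variety $X$ and — more importantly — for \emph{arbitrary} elements of $G^F_r\times L^F_r$, including $p$-singular ones, in contrast to the torus case where the relevant finite groups have order prime to $p$. The resolution is already built into the step above: one never evaluates traces of $(g,l)$ in isolation but always of $(g,l)F^{nm}$, whose fixed-point count is an honest integer, and only afterwards passes to the limit/specialization; this is exactly what tames the wild part. Everything else — the semisimplicity of $\ov\BQ_\ell[L^F_r]$ and the character identity for $\otimes_{L^F_r}$ — is routine.
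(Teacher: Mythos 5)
Your proposal is correct and takes essentially the same route as the paper: reduce to the character formula for $\otimes_{L^F_r}$, and then invoke the fact (ultimately \cite[Prop.~3.3]{DL}, via the Lefschetz fixed-point formula applied to $(g,l)F^{nm}$) that the alternating trace of $(g,l)$ on $H^*_c(X^G_{L,P,r},\ov\BQ_\ell)$ is a rational integer independent of $\ell$. The paper simply cites this rationality/$\ell$-independence directly, while you unfold the underlying argument; the extra care you take about $p$-singular elements is a reasonable remark but not an issue, since the Deligne--Lusztig argument already treats arbitrary finite-order automorphisms commuting with Frobenius.
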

\begin{proof}
    For any $g\in G^F$, we have $$ R_{L,P, r}^{G}(\psi_{\ell, \iota})(g)=|L_r^{F}|^{-1}\sum_{t\in T_r^{F}}Tr((g, l)|H^{*}_c(X^{G}_{L, P,r})\psi_{\ell, \iota}(l^{-1})$$the trace $Tr((g, l)|H^{*}_c(X^{G}_{L, P,r})$ is an integer independent of $\ell$ \cite[Proposition 3.3]{DL}.
    it follows that $\iota R^{G}_{L, P,r}(\psi_{\ell, \iota})(g)=|L_r^{F}|^{-1}\sum_{t\in T_r^{F}}Tr((g, l)|H^{*}_c(X^{G}_{L, P,r})\psi(l^{-1})$ which is independent of $\ell$ and $\iota$.
\end{proof}

Throughout of the paper, we identify $\ov \BQ^\times_\ell$ (vitual) representation with complex (vitual) representation. For a character $\psi$, $\overline{\psi}$ denotes its complex conjugation.

\begin{proposition}\label{bar}
  let $\psi \in \mathcal{R}(L_r^F)$. The parahoric Lusztig induction satisfies: $$ R_{L, P, r}^{G}(\overline \psi)=\overline{
  R_{L, P, r}^{G}(\psi) }$$
\end{proposition}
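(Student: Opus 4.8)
The plan is to verify the identity at the level of virtual characters, since a virtual representation of the finite group $G_r^F$ is determined by its character. First I would record the character formula for higher Lusztig induction already used in the proof of Proposition~\ref{equiv}: for $g \in G_r^F$,
\[
R_{L,P,r}^G(\psi)(g) = |L_r^F|^{-1} \sum_{l \in L_r^F} \mathrm{Tr}\big((g,l) \mid H_c^*(X^G_{L,P,r})\big)\, \psi(l^{-1}),
\]
where $\mathrm{Tr}((g,l)\mid H_c^*) = \sum_i (-1)^i \mathrm{Tr}((g,l)\mid H_c^i(X^G_{L,P,r},\ov\BQ_\ell))$.

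The key input is that this virtual trace is a \emph{rational integer}: the element $(g,l)$ has finite order, so it acts on the variety $X^G_{L,P,r}$ and the alternating sum of traces computes the number of its fixed points by the Grothendieck--Lefschetz trace formula --- equivalently, this is \cite[Proposition 3.3]{DL}, as already invoked in Proposition~\ref{equiv}. In particular these Lefschetz numbers are fixed by complex conjugation. Hence, using the convention $\ov\psi(l) = \ov{\psi(l)}$, I compute for $g \in G_r^F$:
\[
\ov{R_{L,P,r}^G(\psi)}(g) = \ov{R_{L,P,r}^G(\psi)(g)} = |L_r^F|^{-1} \sum_{l \in L_r^F} \mathrm{Tr}\big((g,l) \mid H_c^*(X^G_{L,P,r})\big)\, \ov{\psi(l^{-1})} = R_{L,P,r}^G(\ov\psi)(g),
\]
where the middle equality uses reality of the traces and the last one uses $\ov{\psi(l^{-1})} = \ov\psi(l^{-1})$. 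This establishes the equality of virtual characters, hence of virtual representations.

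There is no substantial obstacle; the only point needing care is the integrality (hence reality) of the Lefschetz numbers $\mathrm{Tr}((g,l)\mid H_c^*(X^G_{L,P,r}))$, which is exactly the fact already exploited in Proposition~\ref{equiv}. One could alternatively argue structurally, via Poincar\'e duality on $X^G_{L,P,r}$ together with the identification of $\ov{H_c^i(X,\ov\BQ_\ell)}$ with $H_c^i(X,\ov\BQ_\ell)$ as virtual modules, but the character computation above is cleaner and entirely self-contained given the earlier results.
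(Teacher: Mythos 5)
Your argument is correct and is essentially the same as the paper's: both write out the character formula for $R_{L,P,r}^G$, invoke the rational integrality of the Lefschetz numbers $\mathrm{Tr}((g,l)\mid H_c^*(X^G_{L,P,r}))$ from \cite[Proposition 3.3]{DL} (as already used for Proposition~\ref{equiv}) to pull complex conjugation through, and conclude at the level of virtual characters. Your write-up is in fact a bit cleaner than the paper's, which contains a couple of typos in the displayed computation (the summation index and the placement of the conjugation bar).
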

\begin{proof}
    For any $g\in G^F_r$
   \begin{align*}
       R_{L, P, r}^{G}(\overline \psi)(g)&=|L_r^{F}|^{-1}\sum_{t\in T_r^{F}}\overline{Tr((g,l)|H^{*}_c(X^{G}_{L, P,r})}\overline\psi(l^{-1})\\
    &=|L_r^{F}|^{-1}\sum_{t\in T_r^{F}}Tr((g,l)|H^{*}_c(X^{G}_{L, P,r})\overline\psi(l^{-1})\\
    &=\overline{R^G_{L,P,r}(\psi)}
    \end{align*} 
\end{proof}

Let $\pi: G_\sc \to G_\der$ be the covering map. For $T\subset L\subset P=LV\subset G$, consider the following groups: $ T_\der:= G_\der \cap T$, $ L_\der:= G_\der\cap L$, $ P_\der:= G_\der \cap  P$, and $ V_{\der}= G_{\der}\cap V= V$. We also set $ T_{\sc}=\pi^{-1}( T_{\der})$, $ L_{\sc}=\pi^{-1}( L_\der)$, $ P_\sc=\pi^{-1}( P_{\der})$, and $\pi_r:G_{\sc,r}\to G_{\der,r}$ to be the map induced by $\pi$.

\begin{lemma}\label{pisc}
    There is a natural bijection: $$ T^F_r/\pi_r((T_{\sc,r})^F)\cong L^F_r/\pi_r((L_{\sc,r})^F)\cong G^F_r/\pi_r((G_{\sc,r})^F)$$
\end{lemma}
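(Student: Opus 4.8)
The plan is to prove that the two maps
$T^F_r/\pi_r((T_{\sc,r})^F)\to L^F_r/\pi_r((L_{\sc,r})^F)$ and $L^F_r/\pi_r((L_{\sc,r})^F)\to G^F_r/\pi_r((G_{\sc,r})^F)$
induced by the inclusions $T_r\subseteq L_r\subseteq G_r$ are bijections --- these are well defined since $T_{\sc,r}\subseteq L_{\sc,r}\subseteq G_{\sc,r}$ --- whence the composite $T^F_r/\pi_r((T_{\sc,r})^F)\to G^F_r/\pi_r((G_{\sc,r})^F)$ is a bijection as well. The argument is the level-$r$ incarnation of the classical statement at $r=0$: the group-theoretic skeleton is the same, and the work lies in supplying its inputs at level $r$. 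Set $Z:=\ker\pi\subseteq Z(G_\sc)$; by the standing hypothesis $(*)$, $Z$ is finite \'etale of order $|\pi_1(G_\der)|$, prime to $p$, and since $T_\sc=\pi\i(T_\der)$ and $L_\sc=\pi\i(L_\der)$ one has $Z\subseteq T_\sc\subseteq L_\sc$ with $Z=\ker(\pi|_{T_\sc})=\ker(\pi|_{L_\sc})$. First I would record the following facts, all consequences of $(*)$ together with the constructions of \cite[\S 2.6]{CI21} and \cite{Ch24}: (i) $\pi_r\colon H_{\sc,r}\to H_{\der,r}$ is surjective with kernel the finite \'etale group $Z_r$, and $\pi_r\i(T_{\der,r})=T_{\sc,r}$, $\pi_r\i(L_{\der,r})=L_{\sc,r}$, for $H\in\{T,L,G\}$ (this is where $(*)$ enters, making $\pi$ a separable isogeny); (ii) $T_{\sc,r}$, $G_{\sc,r}$, $T_{\der,r}$ are connected, being Moy--Prasad quotients of connected $\brk$-split groups, so $H^1(F,-)$ vanishes on them by Lang's theorem; (iii) the level-$r$ analogues $G_r=T_r\cdot G_{\der,r}$, $L_r=T_r\cdot L_{\der,r}$ of the decompositions $G=TG_\der$, $L=TL_\der$ hold, and $T_r\cap G_{\der,r}=T_{\der,r}$; (iv) $H\mapsto H_r$ is compatible with the finite intersections used below, e.g.\ $T_{\sc,r}\cap(G_{\sc,r})^F=(T_{\sc,r})^F$ and $L_{\sc,r}\cap(G_{\sc,r})^F=(L_{\sc,r})^F$.

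For injectivity I would argue as follows. If $t\in T^F_r$ can be written $t=\pi_r(s)$ with $s\in(G_{\sc,r})^F$, then $t\in\pi_r(G_{\sc,r})=G_{\der,r}$, so $t\in T_r\cap G_{\der,r}=T_{\der,r}$ by (iii); then $s\in\pi_r\i(T_{\der,r})=T_{\sc,r}$ by (i), hence $s\in(T_{\sc,r})^F$ by (iv), so $t\in\pi_r((T_{\sc,r})^F)$. This shows $T^F_r/\pi_r((T_{\sc,r})^F)\hookrightarrow G^F_r/\pi_r((G_{\sc,r})^F)$, and the identical argument with $L$ in place of $T$ gives $L^F_r/\pi_r((L_{\sc,r})^F)\hookrightarrow G^F_r/\pi_r((G_{\sc,r})^F)$.

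For surjectivity, given $g\in G^F_r$ I would first use (iii) to write $g=tu$ with $t\in T_r$, $u\in G_{\der,r}$; the $F$-equivariance of $g$ forces $t\i F(t)=uF(u)\i\in T_r\cap G_{\der,r}=T_{\der,r}$, which is connected, so by Lang it equals $\sigma\i F(\sigma)$ for some $\sigma\in T_{\der,r}$, and after replacing $t$ by $t\sigma\i$ and $u$ by $\sigma u$ I may assume $t\in T^F_r$ and $u\in(G_{\der,r})^F=\pi_r(G_{\sc,r})^F$. Writing $u=\pi_r(s)$ with $s\in G_{\sc,r}$, the element $z:=s\i F(s)$ lies in $Z_r=\ker\pi_r$; since $Z_r\subseteq T_{\sc,r}$ and $H^1(F,T_{\sc,r})=0$ by (ii), the class of $z$ dies in $H^1(F,T_{\sc,r})$, i.e.\ $z=s_0\i F(s_0)$ for some $s_0\in T_{\sc,r}$. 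As $Z_r$ is central and $\pi_r(z)=1$, a short check gives $\pi_r(s_0)\in(T_{\der,r})^F$ and $ss_0\i\in(G_{\sc,r})^F$, so $u=\pi_r(ss_0\i)\,\pi_r(s_0)$ and $g=\bigl(t\,\pi_r(s_0)\bigr)\cdot\pi_r(ss_0\i)$ with $t\,\pi_r(s_0)\in T^F_r$ and $\pi_r(ss_0\i)\in\pi_r((G_{\sc,r})^F)$. Hence the class of $g$ in $G^F_r/\pi_r((G_{\sc,r})^F)$ lies in the image of $T^F_r/\pi_r((T_{\sc,r})^F)$, a fortiori of $L^F_r/\pi_r((L_{\sc,r})^F)$; together with the injectivity above this shows both maps are bijections.

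The main obstacle is not the group theory above --- which is the textbook $r=0$ argument --- but the verification of the structural facts (i)--(iv) at level $r$: that hypothesis $(*)$ makes $\pi_r$ a surjection with finite \'etale kernel, that the pertinent Moy--Prasad jet groups are connected so that Lang's theorem applies, and that $H\mapsto H_r$ commutes with the relevant intersections and with the short exact sequence $1\to Z_r\to G_{\sc,r}\xrightarrow{\pi_r}G_{\der,r}\to 1$. These are properties of the constructions recalled in \cite[\S 2.6]{CI21} and used throughout \cite{Ch24}, which I would cite directly rather than reprove.
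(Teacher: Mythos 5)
Your proof is correct and rests on the same ultimate input as the paper's (Lang's theorem for connected level-$r$ groups), but it packages the surjectivity differently, so it is worth contrasting. The paper's proof is shorter and more structural: it observes that $\varphi\colon T_r\times G_{\sc,r}\to G_r$, $(t,g)\mapsto t\pi(g)$, is a $T_{\sc,r}$-torsor (with action $t'\cdot(t,g)=(t\pi(t'),t'^{-1}g)$), and invokes Lang's theorem once, on the connected structure group $T_{\sc,r}$, to get surjectivity on $F$-fixed points. You instead unpack this by hand in two stages: first split $g=tu$ along $G_r=T_r\cdot G_{\der,r}$ and use Lang on $T_{\der,r}$ to arrange $t\in T^F_r$, $u\in(G_{\der,r})^F$; then lift $u$ to $s\in G_{\sc,r}$ and kill the $Z_r$-valued cocycle $z=s^{-1}F(s)$ by Lang on $T_{\sc,r}$. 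The two-step argument buys you explicitness (you also spell out the injectivity, which the paper calls ``clear'', and isolate the structural facts (i)--(iv) that both proofs silently rely on), at the cost of an extra application of Lang; the torsor formulation buys brevity. One tiny slip in your last rearrangement: from $u=\pi_r(ss_0^{-1})\pi_r(s_0)$ you cannot directly conclude $g=(t\pi_r(s_0))\cdot\pi_r(ss_0^{-1})$, since $\pi_r(s_0)$ and $\pi_r(ss_0^{-1})$ need not commute; the correct regrouping is $g=(t\pi_r(s_0))\cdot\pi_r(s_0^{-1}s)$, and $s_0^{-1}s\in(G_{\sc,r})^F$ by the same centrality-of-$Z_r$ cancellation you already used, so the conclusion is unchanged.
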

\begin{proof}
    It suffices to prove $ T^F_r/\pi_r((T_{\sc,r})^F)\cong G^F_r/\pi_r((G_{\sc,r})^F)$.
    Injectivity is clear; we need to check surjectivity of the map $(T_r\times G_{\sc,r})^F\to G^F_r$ induced by
    $$\varphi:T_r\times G_{\sc,r}\to G_r$$
    $$(t, g)\mapsto t\pi_r(g)$$
    via $\varphi$, $T_r\times G_{\sc,r}$ is a $T_{\sc,r}$-torsor over $G_r$, where $T_{\sc,r}$ acts $\varphi:T_r\times G_{\sc,r}$ via $t'\cdot (t, g)=(t\pi_r(t'), t'^{-1}g)$. And one applies Lang's theorem to the fiber of $\varphi$ which is a connected group $T_{\sc,r}$, we obtain the surjectivity.
\end{proof}

\begin{remark}
   $\pi_r((G_{\sc,r})^F)$ does not necessarily equal $G_{der,r}^F$, because the kernel of the covering map is discrete, and there may be multiple Frobenius conjugacy classes.
\end{remark}

\begin{lemma}
    We have
    $$\bigsqcup_{\tau\in L^F_r/\pi_r((L_{\sc,r})^F)}\pi_r(X^{G_{\sc}}_{L_{\sc}, P_{\sc},r}) \dot \tau = X^{G}_{L, P, r}$$
    
    where $\dot\tau\in L^F_r$ is a fixed representative of $\tau\in L^F_r/\pi_r((L_{\sc,r})^F)$.
\end{lemma}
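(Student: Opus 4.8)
The plan is to prove the displayed equality as a \emph{partition} statement: the sets $\pi_r(X^{G_\sc}_{L_\sc,P_\sc,r})\dot\tau$ are pairwise disjoint and exhaust $X^G_{L,P,r}$. First note that $\pi_r(X^{G_\sc}_{L_\sc,P_\sc,r})\dot\tau$ is well defined (independent of the representative $\dot\tau$ of $\tau$): $X^{G_\sc}_{L_\sc,P_\sc,r}$ is stable under right multiplication by $(L_{\sc,r})^F$, hence $\pi_r(X^{G_\sc}_{L_\sc,P_\sc,r})$ is stable under right multiplication by $\pi_r\big((L_{\sc,r})^F\big)$. For the inclusion ``$\supseteq$'': since $V\subseteq G_\der$ and $p\nmid|\pi_1(G_\der)|$, the kernel of $\pi$ is étale and central and meets the unipotent group $V$ trivially, so $\pi$, and hence $\pi_r$, restricts to an isomorphism on unipotent radicals; in particular $\pi_r(V_{\sc,r})=V_r$ and $\pi_r(F(V_{\sc,r}))=F(V_r)$. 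Thus for $y\in X^{G_\sc}_{L_\sc,P_\sc,r}$ one has $\pi_r(y)\i F(\pi_r(y))=\pi_r\big(y\i F(y)\big)\in F(V_r)$, so $\pi_r(y)\in X^G_{L,P,r}$; multiplying on the right by $\dot\tau\in L^F_r$ stays in $X^G_{L,P,r}$ by the $L^F_r$-action.

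For ``$\subseteq$'', take $x\in X^G_{L,P,r}$. Write $x\i F(x)=\pi_r(v_0)$ for the unique $v_0\in F(V_{\sc,r})$ and, by Lang's theorem applied to the connected group $G_{\sc,r}$, choose $y_0\in G_{\sc,r}$ with $y_0\i F(y_0)=v_0$; then $y_0\in X^{G_\sc}_{L_\sc,P_\sc,r}$ and $x$ and $\pi_r(y_0)$ have the same image under $z\mapsto z\i F(z)$, so $g:=x\,\pi_r(y_0)\i\in G^F_r$. By Lemma~\ref{pisc}, $G^F_r=L^F_r\cdot\pi_r\big((G_{\sc,r})^F\big)$, so $g=\dot\tau\,\pi_r(h)$ with $\dot\tau\in L^F_r$, $h\in(G_{\sc,r})^F$, and hence
\[
x=\dot\tau\,\pi_r(hy_0)=\pi_r\big({}^{\dot\tau}(hy_0)\big)\,\dot\tau .
\]
Now $hy_0\in X^{G_\sc}_{L_\sc,P_\sc,r}$ by the left $(G_{\sc,r})^F$-action, and conjugation by $\dot\tau\in L^F_r$ preserves $X^{G_\sc}_{L_\sc,P_\sc,r}$ because $L$, being $k$-rational and so $F$-stable, is a Levi factor of $F(P)$ and therefore normalizes $F(V)$, whence $\dot\tau$ normalizes $F(V_{\sc,r})$. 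So $x\in\pi_r(X^{G_\sc}_{L_\sc,P_\sc,r})\dot\tau$.

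For disjointness, suppose $\pi_r(y_1)\dot\tau_1=\pi_r(y_2)\dot\tau_2$ with $y_i\in X^{G_\sc}_{L_\sc,P_\sc,r}$ and $\dot\tau_i\in L^F_r$, and set $w:=y_2\i y_1$. Then $\pi_r(w)=\dot\tau_2\dot\tau_1\i\in L^F_r$; since also $w\in G_{\sc,r}$ this forces $w\in L_{\sc,r}=\pi_r\i(L_{\der,r})$ and $w\i F(w)\in\ker\pi_r=:\mu_r$, a finite central subgroup of order prime to $p$. Using $y_1=y_2w$,
\[
F(V_{\sc,r})\ni y_1\i F(y_1)=w\i\big(y_2\i F(y_2)\big)w\cdot w\i F(w),
\]
and the first factor lies in $F(V_{\sc,r})$ since $L_\sc$ is likewise $F$-stable, hence a Levi factor of $F(P_\sc)$, so that $w\in L_{\sc,r}$ normalizes $F(V_{\sc,r})$. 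Because $\mu_r$ is central in $G_{\sc,r}$ and meets the unipotent group $F(V_{\sc,r})$ trivially, the product $F(V_{\sc,r})\mu_r$ is direct, so comparing $\mu_r$-components forces $w\i F(w)=1$, i.e.\ $w\in(L_{\sc,r})^F$; hence $\dot\tau_2\dot\tau_1\i\in\pi_r\big((L_{\sc,r})^F\big)$ and $\tau_1=\tau_2$.

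I expect disjointness to be the delicate point. Naively one fears an $H^1(F,\mu_r)$-type obstruction to deciding when an element of $X^G_{L,P,r}\cap\pi_r(G_{\sc,r})$ lifts back into $X^{G_\sc}_{L_\sc,P_\sc,r}$; the key observation is that requiring $y_1$ \emph{and} $y_2$ to both lie in the Deligne--Lusztig variety pins the relevant $\mu_r$-component and forces it to vanish, so this obstruction never materialises. The remaining input is routine bookkeeping with the level-$r$ group schemes --- that $\mu_r$ is étale and central, that $L$, $L_\sc$, $F(V)$, $F(V_\sc)$ are $F$-stable, and that $G_{\sc,r}$ (hence its quotient $\pi_r(G_{\sc,r})$) is connected so Lang's theorem applies --- which can be read off from the definitions in \cite{CI21}.
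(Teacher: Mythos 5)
Your argument follows essentially the same blueprint as the paper's: for surjectivity you apply Lang's theorem in $G_{\sc,r}$ to lift the Lang-image of $x$, then invoke Lemma~\ref{pisc} to write the resulting $G^F_r$-element as $\dot\tau\,\pi_r(h)$; for disjointness you reduce to showing that $w:=y_2^{-1}y_1\in L_{\sc,r}$ is $F$-fixed. You may find it reassuring that your disjointness argument is actually \emph{more} careful than the paper's: the paper asserts $\pi_r(g_1)^{-1}F(\pi_r(g_1))=\pi_r(g_2)^{-1}F(\pi_r(g_2))$ directly, which does not follow from $\pi_r(g_1)\dot\tau=\pi_r(g_2)$ alone (it would require $\dot\tau$ to centralize the Lang image), whereas your computation $y_1^{-1}F(y_1)=w^{-1}\bigl(y_2^{-1}F(y_2)\bigr)w\cdot w^{-1}F(w)$ together with the direct-product structure $F(V_{\sc,r})\cdot\ker\pi_r$ cleanly pins down the $\ker\pi_r$-component and forces $w^{-1}F(w)=1$. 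Your ``$\supseteq$'' inclusion, which the paper leaves implicit, is also a welcome addition.

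The one place where your writeup is looser than the paper's is the manipulation $\dot\tau\,\pi_r(hy_0)=\pi_r\bigl({}^{\dot\tau}(hy_0)\bigr)\dot\tau$: as written, ${}^{\dot\tau}(hy_0)$ presupposes an action of $\dot\tau\in L^F_r$ on $G_{\sc,r}$. Such an action does exist (conjugation by $G$ on $G_\der$ lifts canonically to $G_\sc$ and hence to the level-$r$ schemes, equivariantly for $\pi_r$), but you should say so; alternatively you can follow the paper and split $\dot\tau$ as a central element times an element of $L_{\der,r}$ that lifts to $L_{\sc,r}$, then observe that the central factor is irrelevant for conjugation. Either way, the substance is correct and the claim ``conjugation by $\dot\tau$ preserves $X^{G_\sc}_{L_\sc,P_\sc,r}$'' is exactly what the paper establishes in its first step ($\dot\tau\,\pi_r(X^{G_\sc}_{L_\sc,P_\sc,r})=\pi_r(X^{G_\sc}_{L_\sc,P_\sc,r})\,\dot\tau$), so you are not smuggling in any genuinely new ingredient.
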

\begin{proof}
    We first prove the equality 
    $$\dot \tau \pi_r(X^{G_{\sc}}_{L_{\sc}, P_{\sc},r}) = \pi_r(X^{G_{\sc}}_{L_{\sc}, P_{\sc},r})\dot \tau.$$
    Since $L_r = L_{\der,r}Z(G)_r$, we may assume $\dot \tau \in L_{\der,r}$. Let $\tau_\sc$ be a preimage of $\dot \tau$ under $\pi$. As $\tau_\sc \in L_{\sc,r}$ normalizes $V_r$, we have
    $$\tau_\sc X^{G_{\sc}}_{L_{\sc}, P_{\sc}, r} = X^{G_{\sc}}_{L_{\sc}, P_{\sc}, r} = X^{G_{\sc}}_{L_{\sc}, P_{\sc}, r}\tau_\sc.$$
    Taking images under $\pi_r$ yields the desired equality.
    
    Now let $x \in X^{G}_{L, P, r}$ and write $x^{-1}F(x) = v \in F(V_r)$. By Lang's theorem, there exists $u \in F(V_r)$ such that $u^{-1}F(u) = v$. Thus there exists $g \in G^{F}_r$ satisfying $gu = x$. According to \ref{pisc}, we can find $\tau \in L^F_r/\pi_r((L_{\sc,r})^F)$ such that $g = \dot \tau g'$ with $g' \in \pi_r((G_{\sc,r})^F)$. Therefore,
    $$x = gu = \dot \tau g' u \in \dot \tau \pi_r(X^{G_{\sc}}_{L_{\sc}, P_{\sc}, r}),$$
    which gives the decomposition
    $$X^{G}_{L, P, r} = \bigcup_{\tau\in L^F_r/\pi_r((L_{\sc,r})^F)}\dot \tau\pi_r(X^{G_{\sc}}_{L_{\sc}, P_{\sc}, r}) = \bigcup_{\tau\in L^F_r/\pi_r((L_{\sc,r})^F)} \pi_r(X^{G_{\sc}}_{L_{\sc}, P_{\sc},r})\dot \tau.$$

    To show this is a disjoint union, if there exist $\dot \tau \in L_r^F$ and $g_1, g_2 \in X^{G_{\sc}}_{L_{\sc}, P_{\sc}, r }$ such that $\pi_r(g_1)\dot \tau = \pi_r(g_2)$. We need to prove $\dot \tau \in \pi_r((L_{\sc,r})^F)$. Note that $\pi$ is identity on $V_r$, so from 
    $$\pi_r(g_1)^{-1}F(\pi_r(g_1)) = \pi_r(g_2)^{-1}F(\pi_r(g_2)) \in F(V_r),$$ 
    we obtain 
    $$g_1^{-1}F(g_1) = g_2^{-1}F(g_2) \in F(V_r).$$ 
    Hence there exists $g \in (G_{\sc,r})^{F}$ such that $g_1g = g_2$. Therefore,
    $$\pi(g) = \dot \tau \in \pi_r((G_{\sc,r})^F \cap L_{\sc,r}) = \pi_r((L_{\sc,r})^F).$$
\end{proof}
\begin{proposition}\label{Re}
    Let $\theta$ be a character of $G^{F}_r$ that is trivial on $G^{F}_{\sc,r}$ and let $\psi\in \CR(L^F_r)$. Then
    $$R^{G}_{L, P, r}(\psi)\otimes_{G^F_r} \theta = R^{G}_{L, P, r}(\psi\otimes \theta|_{L^{F}_r}).$$
\end{proposition}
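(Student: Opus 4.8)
The plan is to reduce the identity $R^G_{L,P,r}(\psi)\otimes_{G^F_r}\theta = R^G_{L,P,r}(\psi\otimes\theta|_{L^F_r})$ to a statement about how the cohomology $H^*_c(X^G_{L,P,r},\ov\BQ_\ell)$ as a $G^F_r\times L^F_r$-module interacts with twisting by the one-dimensional character $\theta$. Unwinding the definitions, both sides are virtual $G^F_r$-representations obtained from the bimodule $H^*_c(X^G_{L,P,r})$; the left side tensors the output by $\theta$, the right side tensors the input $\psi$ by $\theta|_{L^F_r}$. So it suffices to exhibit, for each $i$, a $G^F_r\times L^F_r$-equivariant isomorphism
\[
H^i_c(X^G_{L,P,r},\ov\BQ_\ell)\otimes\theta \;\cong\; H^i_c(X^G_{L,P,r},\ov\BQ_\ell)\otimes\theta|_{L^F_r},
\]
where on the left $G^F_r$ acts diagonally (and $L^F_r$ as before) and on the right $L^F_r$ acts diagonally (and $G^F_r$ as before). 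Equivalently, writing $M = H^i_c(X^G_{L,P,r})$, I want the character $g\mapsto\theta(g)$ of $G^F_r$ and the character $l\mapsto\theta|_{L^F_r}(l)$ of $L^F_r$ to act the same way on $M$, i.e. the action of $(g,l)$ on $M$ depends only on $\theta(g)\theta(l)^{-1}$ being trivial — more precisely, that $\theta(g\ell^{-1})$ agrees on the "support" of the bimodule.

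The key input is the decomposition lemma just proved: $X^G_{L,P,r} = \bigsqcup_{\tau\in L^F_r/\pi_r((L_{\sc,r})^F)}\pi_r(X^{G_{\sc}}_{L_{\sc},P_{\sc},r})\dot\tau$, together with Lemma~\ref{pisc} which identifies $T^F_r/\pi_r((T_{\sc,r})^F)\cong L^F_r/\pi_r((L_{\sc,r})^F)\cong G^F_r/\pi_r((G_{\sc,r})^F)$. Since $\theta$ is trivial on $G^F_{\sc,r} = \pi_r((G_{\sc,r})^F)$ (I should be slightly careful: "trivial on $G^F_{\sc,r}$" should be read as trivial on $\pi_r((G_{\sc,r})^F)$, which is the image relevant to the action on $X^G_{L,P,r}$; in any case $\theta$ factors through $G^F_r/\pi_r((G_{\sc,r})^F)$), the character $\theta$ is constant on each piece $\pi_r(X^{G_{\sc}}_{L_{\sc},P_{\sc},r})\dot\tau$ in the sense that for $g\in G^F_r$, $l\in L^F_r$, and $x$ in this piece, if $gxl$ lands in the piece indexed by $\tau'$ then $\theta(g)$ and $\theta(l)$ are determined modulo the simply-connected images, and the diagonal $G$- versus diagonal $L$-twist differ only by a factor coming from $\tau$ and $\tau'$ — which cancels because left multiplication by $g$ shifts $\tau$ by the class of $g$ and right multiplication by $l$ shifts $\tau$ by the class of $l$, and under the bijection of Lemma~\ref{pisc} these classes are "the same". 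Concretely, I would show that the involution/relabeling $x\mapsto$ (same point) on $H^*_c$, combined with the scalar $\theta(\dot\tau)$ on the $\tau$-component, intertwines the two module structures; this is essentially the computation that for $x\in\pi_r(X^{G_{\sc}})\dot\tau$ one has $\theta(g) = \theta(l)\cdot(\text{ratio of }\tau\text{-labels})$ whenever $gxl$ stays in a fixed component, because $g,l$ and the component shift are matched under Lemma~\ref{pisc}.

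Carrying this out: first I would fix the identification of $\theta$ with a character of $G^F_r/\pi_r((G_{\sc,r})^F)$ and, via Lemma~\ref{pisc}, with a character of $L^F_r/\pi_r((L_{\sc,r})^F)$ and of $T^F_r/\pi_r((T_{\sc,r})^F)$, checking these are compatible (the restriction $\theta|_{L^F_r}$ corresponds to the same character under the bijection). Second, I would use the disjoint decomposition to write $H^*_c(X^G_{L,P,r}) = \bigoplus_\tau H^*_c(\pi_r(X^{G_{\sc}}_{L_{\sc},P_{\sc},r})\dot\tau)$ as vector spaces, note $G^F_r$ permutes these summands according to its action on $L^F_r/\pi_r((L_{\sc,r})^F)$ via left multiplication and $L^F_r$ via right multiplication, and track how $(g,l)$ moves a summand. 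Third, I would define the map $H^*_c(X^G_{L,P,r})\otimes\theta\to H^*_c(X^G_{L,P,r})\otimes\theta|_{L^F_r}$ by acting as $\theta(\dot\tau)^{\pm1}$ (the correct sign/power to be pinned down) on the $\tau$-summand and the identity on the cohomology, and verify equivariance by the bookkeeping above. The main obstacle I anticipate is precisely this bookkeeping: making sure the cocycle coming from the choice of representatives $\dot\tau$ is trivialized correctly — i.e. that $\theta$ evaluated on the "discrepancy" $\dot\tau g' u$-type elements appearing in the proof of the decomposition lemma behaves multiplicatively and consistently — and handling the subtlety that $\pi_r((G_{\sc,r})^F)$ may be strictly smaller than $(G_{\der,r})^F$, so one must work with the correct subgroup throughout and not confuse it with the derived group. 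Everything else (Künneth-type additivity over the disjoint union, independence of choices) is routine once the indexing is set up.
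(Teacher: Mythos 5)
Your proposal is correct in spirit and rests on exactly the same two ingredients the paper uses — the decomposition $X^G_{L,P,r}=\bigsqcup_{\tau} \pi_r(X^{G_\sc}_{L_\sc,P_\sc,r})\dot\tau$ and the fact that $\theta$ is trivial on $\pi_r((G_{\sc,r})^F)$ and hence factors through the common quotient from Lemma~\ref{pisc} — but you choose a genuinely different level at which to run the argument. The paper works entirely with the character (trace) formula for $R^G_{L,P,r}$: for fixed $g$ one splits the sum over $l\in L^F_r$ according to whether the action of $(g,l)$ stabilizes some component of the disjoint union; when it does not, the trace vanishes because $(g,l)$ permutes the summands of $H^*_c$ with no fixed component, and when it does, one has $\theta(g)=\theta(l^{-1})$ so the extra factor $\theta(l^{-1})$ in the sum pulls out as $\theta(g)$. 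You instead propose to build an explicit $(G^F_r\times L^F_r)$-equivariant isomorphism between the two differently twisted bimodules $H^i_c\otimes\theta$ and $H^i_c\otimes\theta|_{L^F_r}$, using the scalar $\theta(\dot\tau)^{\pm1}$ on the $\tau$-summand. This would prove a stronger, module-level statement, and I verified that the bookkeeping does close up: with the map $m\mapsto\theta(\dot\tau)^{-1}m$ on the $\tau$-summand, both the right $L^F_r$-equivariance (because $\theta(\dot{\tau l})=\theta(\dot\tau)\theta(l)$) and the left $G^F_r$-equivariance (because left multiplication by $g$ shifts the index by the class of $g$ under Lemma~\ref{pisc}, so $\theta(g)\theta(\dot{\tau_g\tau})^{-1}=\theta(\dot\tau)^{-1}$) hold.

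Two points you should pin down to make this airtight. First, the reduction from the original identity to the bimodule isomorphism requires care about which side of the tensor the twist sits on and whether it is $\theta$ or $\theta^{-1}$ that appears after passing to $L^F_r$-coinvariants; with the standard conventions this works, but it is exactly the kind of sign error that is easy to make, and the trace-formula route the paper takes sidesteps it entirely. Second, the direct-sum decomposition $H^i_c(X^G_{L,P,r})=\bigoplus_\tau H^i_c(\pi_r(X^{G_\sc})\dot\tau)$ you rely on needs the pieces to be open and closed (or at least that the associated spectral sequence degenerates); since the decomposition is a finite disjoint union of $G^F_r$-translates of a single piece, this is fine, but it is an extra observation the character-level proof does not need. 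In short: same key lemma, same insight about $\theta$, but you opt for a more structural (and slightly more laborious) route where the paper stays on the level of characters; both are valid, and the trace computation is the more economical of the two.
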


\begin{proof}
    For $g \in G_r^F$, we have
    $$R^{G}_{L, P, r}(\psi\otimes \theta|_{L^{F}_r})(g) = |L^{F}_r|^{-1}\sum_{l\in L_r^{F}}\mathrm{Tr}((g, l)|H^{*}_c(X^{G}_{L, P, r}))\psi(l^{-1})\theta(l^{-1}).$$
    
    By \ref{pisc}, there exists $\tau \in L^F_r/\pi_r((L_{\sc,r})^F)$ such that $g = \dot{\tau} g'$ with $g' \in \pi_r((G_{\sc,r})^F)$. For $l \in L^F_r$, if $\dot{\tau} l \notin \pi_r((L_{\sc,r})^F)$, then the action of $(g,l)$ on
    $$\bigsqcup_{\tau\in L^F_r/\pi_r((L_{\sc,r})^F)}\pi_r(X^{G_\sc}_{L_\sc, P_\sc, r}) \dot{\tau} = X^{G}_{L, P, r}$$
    maps each component to a different one. Hence
    $$\mathrm{Tr}((g, l)|H^{*}_c(X^{G}_{L, P,r})) = 0.$$
    
    If $\dot{\tau} l \in \pi_r((L_{\sc,r})^F)$, since $\theta$ is trivial on $G^{F}_{\sc,r}$, we have $\theta(g) = \theta(\dot{\tau}) = \theta(l^{-1})$. Thus we compute:
  \begin{align*}
        R^{G}_{L, P,r}(\psi\otimes \theta|_{L^{F}_r})(g) &= |L_r^{F}|^{-1}\sum_{l\in L_r^{F}}\mathrm{Tr}((g, l)|H^{*}_c(X^{G}_{L, P,r}))\psi(l^{-1})\theta(l^{-1}) \\
        &= R^{G}_{L, P,r}(\phi)(g)\theta(g) \\
        &= (R^{G}_{L, P,r}(\phi)\otimes \theta)(g)
    \end{align*}
\end{proof}

\begin{theorem}\cite[\S 5]{Ch24}\label{Ell}
    Let $B\subset G$ be a Borel subgroup containing $T$. Assume $T \subset G$ is elliptic over $k$ and fix $s < r$. For any character $\theta:T^{F}_s\to \overline{\mathbb{Q}}_\ell^\times$, the following equality holds:
    $$R^{G}_{T, B,r}(\mathrm{Inf}^{T_r}_{T_s}(\theta)) = \mathrm{Inf}^{G_r}_{G_s}\left(R^{G}_{T, B, s}(\theta)\right)$$
\end{theorem}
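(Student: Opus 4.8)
The plan is to compare the level‑$r$ and level‑$s$ varieties through the level projection. Let $U$ denote the unipotent radical of $B$, so that $X^{G}_{T,B,r}=\{x\in G_{r}\;:\;x\i F(x)\in F(U_{r})\}$, and let $p\colon X^{G}_{T,B,r}\to X^{G}_{T,B,s}$ be the morphism induced by $G_{r}\twoheadrightarrow G_{s}$ and $T_{r}\twoheadrightarrow T_{s}$; it is equivariant for $G^{F}_{r}\times T^{F}_{r}$ and $G^{F}_{s}\times T^{F}_{s}$ through the quotient maps, whose kernels we write $K_{G}=\ker(G^{F}_{r}\to G^{F}_{s})$ and $K_{T}=\ker(T^{F}_{r}\to T^{F}_{s})$. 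Since $\mathrm{Inf}^{T_{r}}_{T_{s}}(\theta)$ factors through $T^{F}_{r}\twoheadrightarrow T^{F}_{s}$, the Lefschetz trace formula (as in the trace expression used for Proposition~\ref{equiv}) yields
\[
R^{G}_{T,B,r}(\mathrm{Inf}^{T_{r}}_{T_{s}}\theta)(g)=|T^{F}_{r}|\i\!\!\!\!\sum_{\substack{x\in X^{G}_{T,B,r}\\ x\i gx\in T^{F}_{r}}}\!\!\!\!\theta\big(\ov{x\i gx}\big),\qquad R^{G}_{T,B,s}(\theta)(\ov g)=|T^{F}_{s}|\i\!\!\!\!\sum_{\substack{\ov x\in X^{G}_{T,B,s}\\ \ov x\i\ov g\ov x\in T^{F}_{s}}}\!\!\!\!\theta\big(\ov x\i\ov g\ov x\big) .
\]
Because the summand attached to $x$ depends only on $p(x)$ and $|T^{F}_{r}|=|K_{T}|\cdot|T^{F}_{s}|$, the theorem reduces to the combinatorial claim $(\dagger)$: for every $g\in G^{F}_{r}$ and every $\ov x\in X^{G}_{T,B,s}$ with $\ov x\i\ov g\ov x\in T^{F}_{s}$, the set $\{x\in X^{G}_{T,B,r}\;:\;p(x)=\ov x,\ x\i gx\in T^{F}_{r}\}$ has exactly $|K_{T}|$ elements (it is obviously empty when $\ov x\i\ov g\ov x\notin T^{F}_{s}$).

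To prove $(\dagger)$ I use that $K_{G}$ is a connected unipotent group, so Lang's theorem applies to it with any Steinberg endomorphism; this gives the surjectivity needed above and a lift $x_{1}\in X^{G}_{T,B,r}$ with $p(x_{1})=\ov x$ and $\tau:=x_{1}\i gx_{1}\in T^{F}_{r}$. Put $v_{0}:=x_{1}\i F(x_{1})\in F(U_{r})$. From $F(g)=g$ one gets $F(\tau)=v_{0}\i\tau v_{0}$, so (as $F(\tau)=\tau$) $v_{0}$ commutes with $\tau$; hence $\widetilde F:=\mathrm{Ad}(v_{0})\circ F$, a Steinberg endomorphism of $K_{G}$, fixes $\tau$ and commutes with the inner automorphism $F_{0}:=\mathrm{Ad}(\tau\i)$. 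Writing a fibre element as $x_{1}m$ with $m\in K_{G}$, the two defining conditions unwind to (i) $m\i\widetilde F(m)\in F(U_{r})\cap K_{G}$ and (ii) $m\i F_{0}(m)\in K_{T}$, where $F(U_{r})\cap K_{G}=\ker(F(U_{r})\to F(U_{s}))$ is a vector group and $K_{T}=\big(\ker(T_{r}\to T_{s})\big)^{F}$. A direct check (using $F_{0}(t)=t$ and that $t$ normalizes $F(U_{r})$ for $t\in K_{T}$) shows the solution set of (i)--(ii) is a union of right $K_{T}$‑cosets and contains $K_{T}$ (take $m=1$), so $(\dagger)$ is equivalent to the statement that this solution set equals $K_{T}$. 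This I would settle by descending the Moy--Prasad filtration of $K_{G}$ and solving (i)--(ii) one graded piece at a time: on $\mathrm{gr}^{j}\,\mathrm{Lie}(G)_{\tx}$ the conditions become the linear problem of counting $v$ with $v-\widetilde F(v)$ in a fixed subspace (the positive‑root part) and $v-F_{0}(v)\in\big(\mathrm{gr}^{j}\,\mathrm{Lie}(T)_{\tx}\big)^{F}$.

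The main obstacle is the interaction, on each graded piece, between the toral and root directions. Decomposing $\mathrm{gr}^{j}\,\mathrm{Lie}(G)_{\tx}=\mathrm{gr}^{j}\,\mathrm{Lie}(T)_{\tx}\oplus\bigoplus_{\alpha}\mathfrak g_{j,\alpha}$, the operator $F_{0}=\mathrm{Ad}(\tau\i)$ is scalar on each root line and trivial on the toral part, so (ii) constrains only the root components at roots moved by $\tau$ and leaves the toral component free; but $\widetilde F$ mixes the toral and root directions via $\mathrm{Ad}(v_{0})$, so the toral part must be pinned down by (i). This is precisely where ellipticity of $T$ is used: since $F$ has no nonzero fixed vector on $X_{*}(T)/X_{*}(Z(G))\otimes\BR$, the $F$‑module $\mathrm{gr}^{j}\,\mathrm{Lie}(T)_{\tx}$ (for $s<j\le r$) has $F$‑fixed subspace contained in the central part, so $1-\widetilde F$ is bijective on the relevant non‑central toral directions — exactly what forces the solution count of (i)--(ii) to be $|K_{T}|$ and no larger. (For non‑elliptic $T$ a split subtorus of $T/Z(G)$ survives, the fibre of $p$ acquires extra solutions — geometrically a $\mathbb{G}_{m}$‑factor appears in place of an affine space — and the identity fails, which is why Theorem~\ref{Ell} is stated under the elliptic hypothesis.) Granting $(\dagger)$, inserting the count $|K_{T}|$ into the displayed formula gives $R^{G}_{T,B,r}(\mathrm{Inf}^{T_{r}}_{T_{s}}\theta)(g)=R^{G}_{T,B,s}(\theta)(\ov g)=\mathrm{Inf}^{G_{r}}_{G_{s}}\big(R^{G}_{T,B,s}(\theta)\big)(g)$ for all $g\in G^{F}_{r}$, which is the claim.
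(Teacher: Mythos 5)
The paper merely cites this result from \cite[\S 5]{Ch24} and does not reproduce a proof, so I assess your proposal on its own terms. Your geometric starting point — compare $X^G_{T,B,r}$ with $X^G_{T,B,s}$ through the level projection $p$, lift a point by Lang's theorem, and study the twisted Frobenius $\widetilde F=\mathrm{Ad}(v_0)\circ F$ on $K_G$ — is the right one. However, the reduction to the counting claim $(\dagger)$ is not justified: you replace $\mathrm{Tr}\big((g,t),H_c^*(X^G_{T,B,r})\big)$ by the number of $(g,t)$-fixed points. The Deligne--Lusztig fixed-point theorem (\cite[Thm.\ 3.2]{DL}) gives $\sum_i(-1)^i\mathrm{Tr}(su,H_c^i(X))=\sum_i(-1)^i\mathrm{Tr}(u,H_c^i(X^s))$ for a commuting factorization into a $p'$-part $s$ and a $p$-part $u$, and this collapses to a fixed-point count only when the $p$-part acts trivially. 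For $r>0$ the kernel of $G^F_r\to G^F_0$ is a $p$-group, so a typical $(g,t)\in G^F_r\times T^F_r$ has nontrivial $p$-part and the substitution is illegitimate. This is not a removable technicality: the alternating trace on $H^*_c$ is invariant under (compatible) affine-space fibrations while raw fixed-point counts are not, and an affine fibration $X^G_{T,B,r}/K_T\to X^G_{T,B,s}$ is exactly the mechanism the theorem hinges on. The argument must be carried out cohomologically, by exhibiting this map as an (iterated) affine-space fibration compatible with the $G^F_r\times T^F_r$-action, rather than by counting points in fibers.

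There is a second, independent gap where you invoke ellipticity. You assert that since $F$ has no nonzero fixed vector on $X_*(T)/X_*(Z(G))\otimes\BR$, the $F$-fixed subspace of $\mathrm{gr}^j\mathrm{Lie}(T)_\tx$ lies in the central part. This is false: $\mathrm{gr}^j\mathrm{Lie}(T)_\tx\cong X_*(T)\otimes_{\BZ}\ov\BF_q$ carries the operator $F|_{X_*(T)}\otimes\mathrm{Frob}_q$, whose fixed points always form a full $\BF_q$-form of the $\ov\BF_q$-vector space, ellipticity or not. For the nonsplit rank-one torus in $\mathrm{PGL}_2$, $X_*(T)\otimes\BR$ has no $F$-fixed line and $Z(\mathrm{PGL}_2)$ is trivial, yet $(\mathrm{gr}^j\mathrm{Lie}(T)_\tx)^F$ has $q$ elements. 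You are conflating the $F$-action on the cocharacter lattice with the $F$-action on the graded Lie algebra, which differ by a Frobenius twist. Ellipticity therefore cannot pin down the toral directions by the vanishing you claim, and the one-graded-piece-at-a-time linear algebra sketched for $(\dagger)$ — which is in any case announced as a plan rather than carried out — does not yet supply a valid mechanism.
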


\section{Generic Mackey Formula for Parahoric Lusztig Induction}\label{3}
In this section, we fix a Borel subgroup $T \subset B$ whose unipotent radical is denoted by $U$. Let $L$ be an $F$-rational Levi subgroup standard with respect to $B$ containing $T$. Denote by $P = LB$ the parabolic subgroup with Levi decomposition $P = LV$ .

We write
\begin{align*}
&\mathfrak{t} := T^r_r =  \mathrm{Lie}(T)^{r}_\mathbf{x}/\mathrm{Lie}(T)^{r+}_\mathbf{x} &\mathfrak{t}^* :=  \mathrm{Lie}^*(T)^{-r}_\mathbf{x}/\mathrm{Lie}^*(T)^{(-r)+}_\mathbf{x} \\
&\mathfrak{l} := L^r_r =  \mathrm{Lie}(L)^{r}_\mathbf{x}/\mathrm{Lie}(L)^{r+}_\mathbf{x} &\mathfrak{l}^* :=  \mathrm{Lie}^*(L)^{-r}_\mathbf{x}/\mathrm{Lie}^*(L)^{(-r)+}_\mathbf{x} \\
&\mathfrak{z} := Z(L)^r_r =  \mathrm{Lie}(Z(L))^r_\mathbf{x}/\mathrm{Lie}(Z(L))^{r+}_\mathbf{x}  
&\mathfrak{z}^* :=  \mathrm{Lie}^*(Z(L))^{-r}_\mathbf{x}/\mathrm{Lie}^*(Z(L))^{(-r)+}_\mathbf{x} \\
&\mathfrak{t}_{\mathrm{\der}}:=\mathrm{Lie}(L_{\der}\cap T)^r_\mathbf{x}/\mathrm{Lie}(L_{\der}\cap T)^{r+}_\mathbf{x}\\
&\mathfrak{t}^*_{\mathrm{der}}:=\mathrm{Lie}^\ast(L_{\der}\cap T)^{-r}_\mathbf{x}/\mathrm{Lie}^\ast(L_{\der}\cap T)^{(-r)+}_\mathbf{x}
\end{align*}
we have the decompositions $\mathfrak{t} \simeq \mathfrak{z} \times \mathfrak{t}_{\der}$ and $\mathfrak{t}^* \simeq \mathfrak{z}^* \times \mathfrak{t}^*_{\der}$
\subsection{Generic Characters and Howe factorization}
\begin{definition}\label{generic}($(L,G)$-generic character)
\begin{enumerate}
    \item An element $X\in (\mathfrak{z}^\ast)^F$ is called $(L,G)$-generic of depth $r$ if it satisfies conditions GE1 in \cite[\S 8]{Yu}.
    \item Fix an additive character $\phi:k\to \BC^\times$, a character $\psi$ of $L(k)$ is called $(L,G)$-generic (of depth $r$) if $\psi$ is trivial on $L^{r+}_\mathbf{x}(k)$ and its restriction to $\mathfrak{l}^F$ is given by $\phi \circ X$ for some ($(L,G)$-generic element $X$ of depth $r$.
    \end{enumerate}
\end{definition}

\begin{lemma}\label{genlemma}
    \cite[Lemma 3.6.8]{Kal} Let $\psi:L(k)\to \mathbb{C}^\times$ be a character of depth $r$, trivial on $L_\sc(k)$. Then $\psi$ is $(L,G)$-generic if and only if for all $\alpha \in \Phi(G,T) \smallsetminus \Phi(L,T)$, we have $\psi|_{N_{E/k}(\alpha^\vee(E_r^\times))} \neq \rm{triv}$, where $E$ is a splitting field of $T$. 
\end{lemma}
\begin{remark}
    Since $p$ is not a bad prime and $p$ does not divide $|\pi_1(G_\der)|$, An element $X\in (\mathfrak{z}^\ast)^F$ satisfies GE1 automatically satisfies condition GE2 in \cite[\S 8]{Yu} by \cite[Lemma 8.1]{Yu}.
\end{remark}

We follow the terminology used in \cite[\S 4.1]{Ch24}.
\begin{definition}\label{generic2}
\begin{enumerate}

    \item We say a representation $\rho$ of $L_r^F$ is $(L,G)$-generic if $\rho|_{\mathfrak{t}^F}$ is the restriction of a sum of $(L,G)$-generic characters of depth $r$.
    \item We call a character $\theta$ of $T^F_r$ is $(L,G)$-generic if $\theta|_{\mathfrak{t}^F}$ is the restriction of an $(L,G)$-generic character trivial on $L_{\sc}(k)$
\end{enumerate}
\end{definition}

\begin{definition}\label{Howe}[Howe factorization\cite[\S 3]{Kal}]
  Let \( G^{-1} = T \) and $\theta$ be a character of \( T(k)\). A Howe factorization of \( (\theta, T) \) is a sequence of $k$-rational Levi subgroups  
  \[
    G^0 \subset G^1 \subset \dots \subset G^d = G,
  \]  
  along with characters \( \phi_i \colon G^i(k) \to \mathbb{C}^\times \) for \( i = -1, 0, \dots, d \), satisfying the following properties:  
  \begin{enumerate}
    \item \( \theta = \prod_{i=-1}^d \phi_i|_{T(k)} \).  
    \item For \( 0 \leq i \leq d \), the character \( \phi_i \) is trivial on \( G^i_{\sc}(k) \).  
    \item For \( 0 \leq i < d \), the character \( \phi_i \) has depth \( r_i \) and is \((G^i, G^{i+1})\)-generic.  
      For \( i = d \), $\phi_d$ is trivial if \( r_d = r_{d-1} \) and has depth $r_d$ otherwise. 
      For \( i = -1 \), $\phi_{-1}$ is trivial if \( G^0 = T \) and  
      otherwise satisfies \( \phi_{-1}|_{T(k)_{0+}} = 1 \)  
  \end{enumerate}
\end{definition}

\begin{theorem}\cite[Proposition 3.6.7]{Kal}
    Any character $\theta$ of $T(k)$ admits a Howe factorization $(\phi_{-1},\dots\phi_d)$.
\end{theorem}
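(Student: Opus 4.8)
The plan is to follow the recursive construction of \cite[\S 3.6]{Kal} (compare \cite[\S 8]{Yu}): build the tower $G^d = G \supseteq G^{d-1} \supseteq \cdots \supseteq G^0 \supseteq T$ from the top downward by repeatedly peeling off the leading Moy--Prasad layer of $\theta$, controlling the recursion by a bookkeeping induction on the pair (semisimple rank of $G$, number of \emph{jumps} of $\theta$) -- the jumps being the finitely many $s \in \widetilde{\mathbb{R}}_{\geq 0}$ at which $\theta$ is nontrivial on $T(k)_s/T(k)_{s+}$. Throughout one exploits that $T$ is tame (it splits over $\brk$) and that, under the standing hypothesis $(*)$, the Moy--Prasad isomorphism identifies each $T(k)_s/T(k)_{s+}$ with a filtration quotient of $\mathrm{Lie}(T)$ and, dually, identifies its characters with elements of $(\mathfrak{t}^*)^F$ at the relevant level.

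Concretely, suppose $\theta$ has positive depth $r$, and let $X \in (\mathfrak{t}^*)^F$ at level $r$ be the element corresponding to the restriction of $\theta$ to $T(k)_r/T(k)_{r+}$. Put $G' := C_G(X)^\circ$, a connected $k$-rational reductive subgroup containing $T$ with $\Phi(G',T) = \{\alpha \in \Phi(G,T) : \langle X, \alpha^\vee\rangle = 0\}$. If $G' \subsetneq G$, I would set $G^{d-1} := G'$, $\phi_d := \mathrm{triv}$, $r_d := r_{d-1} := r$, choose a character $\phi_{d-1}\colon G'(k) \to \mathbb{C}^\times$ of depth $r$, trivial on $G'_\sc(k)$, whose degree-$r$ layer is $X$, and recurse on $(\theta\phi_{d-1}^{-1}|_{T(k)}, T)$ inside $G'$ -- whose semisimple rank is strictly smaller. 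If instead $G' = G$, then $X$ is $G$-central, so (by Pontryagin duality, using that characters of closed subgroups of the locally compact abelian group $(G/G_\der)(k)$ extend) there is a character $\phi_d\colon G(k) \to \mathbb{C}^\times$ of depth $\leq r$, trivial on $G_\der(k) \supseteq G_\sc(k)$, whose restriction to $T(k)$ has the same degree-$r$ layer as $\theta$; replacing $\theta$ by $\theta\phi_d^{-1}$ kills the top jump, and I recurse inside the same $G$ with strictly fewer jumps. Once $\theta$ is reduced to depth $0$, I peel off $\phi_{-1}$ with $\phi_{-1}|_{T(k)_{0+}} = 1$ carrying the part of $\theta$ that becomes non-generic in the reductive quotient, let $G^0$ be the subgroup cut out by the residual depth-$0$ datum in that quotient, and take $\phi_0$ to be the remaining depth-$0$, $(G^0,G^1)$-generic character; if $G = T$ the recursion stops immediately with $\phi_0 = \theta$, $\phi_{-1} = \mathrm{triv}$. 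Because each step strictly decreases (semisimple rank, number of jumps) in the lexicographic order, the process terminates.

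Finally I would verify properties (1)--(3) of Definition~\ref{Howe}: (1) holds since at each step $\theta$ was divided by exactly the factor just extracted; (2) holds because each $\phi_i$ with $i \geq 0$ was produced trivial on $G^i_\sc(k)$; and (3), the depth-and-genericity bookkeeping, follows from the case analysis above together with the criterion in Definition~\ref{generic}(3). The one non-formal step -- and the place I expect the main difficulty -- is the assertion that the leading layer $X$, paired with $G' = C_G(X)^\circ$, is $(G',G)$-generic in the sense of GE1: one needs $\phi_{d-1}$ to be nontrivial on $N_{E/k}(\alpha^\vee(E_r^\times))$ for every $\alpha \in \Phi(G,T) \smallsetminus \Phi(G',T)$, i.e.\ that $\langle X, \alpha^\vee\rangle$ have valuation exactly $-r$ (not merely be nonzero) and that a mild separability condition hold along these coroots. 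This is precisely where the hypothesis $(*)$ (that $p \neq 2$ and $p \nmid |\pi_1(G_\der)|$) together with the tameness of $T$ enter -- for small $p$ the centralizer $C_G(X)^\circ$ can fail to be transverse to the relevant coroots and GE1 breaks -- and for the complete argument I would appeal to \cite[\S 3.6]{Kal}.
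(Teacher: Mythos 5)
The paper does not give its own proof of this statement: it is imported wholesale as \cite[Proposition~3.6.7]{Kal}, and the only ``trace'' of the argument left in the paper is the statement of Lemma~\ref{lemma1} (= \cite[Lemma~3.6.9]{Kal}) immediately afterward, labelled as the key ingredient. So the relevant comparison is between your sketch and Kaletha's actual proof, with Lemma~\ref{lemma1} as the yardstick the paper deems essential.

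Your sketch does capture the recursive skeleton of Kaletha's argument --- peel off the leading Moy--Prasad layer, pass to the Levi where the leading term is ``central'', and induct on a lexicographic invariant such as $(\mathrm{rank}, \#\text{jumps})$ --- and your closing paragraph correctly locates where the hypothesis $(*)$ actually bites (GE1 in small characteristic). But the proposal has two soft spots that are exactly what Lemma~\ref{lemma1} is designed to handle, and you never invoke it. First, in the case $G'\subsetneq G$ you simply ``choose a character $\phi_{d-1}\colon G'(k)\to\mathbb{C}^\times$ of depth $r$, trivial on $G'_\sc(k)$, whose degree-$r$ layer is $X$''; the existence of such a character is not free --- it is precisely Lemma~\ref{lemma1} applied with $L=G'$, using that $\theta\circ N_{E/k}\circ\alpha^\vee|_{E_r^\times}=1$ for all $\alpha\in\Phi(G',T)$ by the way $G'$ was defined. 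Second, in the case $G'=G$ your appeal to Pontryagin duality on $(G/G_\der)(k)$ overstates the conclusion: Lemma~\ref{lemma1} produces a character trivial on the image of $G_\sc(k)$, not on $G_\der(k)$, and the two need not coincide; moreover controlling the depth and the restriction to $\mathfrak{t}^F$ is not a formal consequence of extending characters of closed subgroups. Third, a bookkeeping gap: you recurse on $\theta\phi_{d-1}^{-1}|_{T(k)}$ inside $G'$ and produce a Howe factorization there, but you do not explain how to splice the resulting sequence $(\phi_{-1}',\dots,\phi_{d'}')$ with the already-extracted $\phi_{d-1},\phi_d$ --- in particular, the top term $\phi_{d'}'$ lives on $G'(k)$ and must be absorbed into $\phi_{d-1}$, and one must check that the product still has depth $r$, is trivial on $G'_\sc(k)$, and is $(G',G)$-generic. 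Finally, the GE1 worry at the end is largely circular once you use the correct definition of $G'$: the fact that $\phi_{d-1}$ is nontrivial on $N_{E/k}(\alpha^\vee(E_r^\times))$ for $\alpha\in\Phi(G,T)\smallsetminus\Phi(G',T)$ is built into the definition of $\Phi(G',T)$, and the translation into Yu's GE1 is exactly \cite[Lemma~3.6.8]{Kal} (Definition~\ref{generic}(3) in this paper). So: right shape, but the content of Lemma~\ref{lemma1} should be front and center rather than replaced by soft duality arguments, and the recursive splice needs to be stated.
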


The proof of the above theorem relies mainly on the following lemma.
\begin{lemma}\cite[Lemma 3.6.9]{Kal}\label{lemma1}
    Let $\theta$ be a character of $T(k)$ of depth $r$. If for all $\alpha\in \Phi(L,T)$ we have $\theta\circ N_{E,k}\circ {\alpha^{\vee}}|_{E^\times_r}=1$, then there exists a character $\phi:L(k)\to \mathbb{C}^\times$ of depth $r$ that is trivial on $L_{\sc}(k)$ and satisfies $\phi|_{\mathfrak{t}^F}=\theta|_{\mathfrak{t}^F}$.
\end{lemma}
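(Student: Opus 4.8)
The plan is to push the entire question down to the depth-$r$ Moy--Prasad layer of $T(k)$ and then to realise $\phi$ as the inflation of a character of the torus $L/L_\der$. First I would identify $\mathfrak{t}^F$ with the layer $T^r_\tx(k)/T^{r+}_\tx(k)$ of $T(k)$, so that $\theta|_{\mathfrak{t}^F}$ and the sought $\phi|_{\mathfrak{t}^F}$ are honest characters of this finite abelian group. By the standing hypothesis $(*)$ the torus $T$ is tame and the isogeny $Z(L)\times T_\der\to T$ has kernel of order prime to $p$, so it induces an isomorphism on depth-$r$ layers; this is precisely what produces the decomposition $\mathfrak{t}^F\simeq\mathfrak{z}^F\times\mathfrak{t}^F_\der$ recorded above. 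It therefore suffices to prove: (a) $\theta$ is trivial on the $\mathfrak{t}^F_\der$-factor; and (b) the remaining character on $\mathfrak{z}^F$ extends to a character of $L(k)$ that is trivial on $L_\der(k)$, hence a fortiori on $L_\sc(k)$.

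For (a), I would show that the subgroups $N_{E/k}\bigl(\alpha^\vee(E^\times_r)\bigr)$, for $\alpha\in\Phi(L,T)$, together generate the full depth-$r$ layer of $T_\der(k)$, i.e.\ all of $\mathfrak{t}^F_\der$. The coroots $\alpha^\vee$, $\alpha\in\Phi(L,T)$, span a subgroup of finite index in $X_*(T_\der)$ whose index is prime to $p$ by $(*)$, and the norm maps $N_{E/k}$ are surjective on the relevant positive-depth layers (valid since $E/k$ is tame); pushing the coroot directions through these norms thus reaches everything in $\mathfrak{t}^F_\der$, because $\mathfrak{t}^F_\der$ is a finite $p$-group and so the prime-to-$p$ index of the coroot lattice is invisible at this layer. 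Hence the hypothesis $\theta\circ N_{E/k}\circ\alpha^\vee|_{E^\times_r}=1$ for every $\alpha\in\Phi(L,T)$ forces $\theta|_{\mathfrak{t}^F_\der}=1$. Write $\chi$ for the resulting character of $\mathfrak{z}^F$, which we identify with the depth-$r$ layer of $Z(L)(k)$; then $\theta|_{\mathfrak{t}^F}$ is the pullback of $\chi$ along the projection $\mathfrak{t}^F\twoheadrightarrow\mathfrak{z}^F$.

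For (b), observe that the composite $Z(L)\hookrightarrow L\twoheadrightarrow L/L_\der$ is an isogeny of tame $k$-tori with kernel of order prime to $p$, hence induces an isomorphism of $\mathfrak{z}^F$ onto the depth-$r$ layer of $(L/L_\der)(k)$. I would transport $\chi$ across this isomorphism, inflate it to the open subgroup of depth-$\ge r$ elements of $(L/L_\der)(k)$, and then extend it to a character $\tilde\chi$ of all of $(L/L_\der)(k)$ --- possible because $\mathbb{C}^\times$ is divisible, so characters of open subgroups of a locally profinite abelian group always extend. Setting $\phi:=\tilde\chi\circ q$, where $q\colon L(k)\to(L/L_\der)(k)$ is the natural projection, the character $\phi$ is trivial on $\ker q=L_\der(k)\supseteq L_\sc(k)$; restricting $q$ to $T(k)$ and passing to depth-$r$ layers kills the $\mathfrak{t}^F_\der$-summand and restricts on $\mathfrak{z}^F$ to the isogeny above, so $\phi|_{\mathfrak{t}^F}=\theta|_{\mathfrak{t}^F}$. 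Finally $\phi$ is trivial on $L^{r+}_\tx(k)$ because $q$ maps it into the depth-$(r+)$ layer where $\tilde\chi$ vanishes, and $\phi$ is nontrivial on $\mathfrak{t}^F$ because $\theta$ has depth exactly $r$ while $\theta|_{\mathfrak{t}^F_\der}=1$ forces $\chi\ne1$; hence $\phi$ has depth exactly $r$.

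I expect (a) to be the real obstacle: proving that the norm-images of the coroots of $L$ exhaust the layer $\mathfrak{t}^F_\der$ is exactly where the arithmetic of the tame torus $T$, the interaction of the norm map with the Moy--Prasad filtration, and the hypothesis $p\nmid|\pi_1(G_\der)|$ all genuinely come into play. The initial reduction and step (b) are then formal, relying only on the compatibility of Moy--Prasad filtrations with prime-to-$p$ isogenies of tori and on the injectivity of $\mathbb{C}^\times$ as an abelian group.
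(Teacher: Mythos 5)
The paper does not give its own proof of this statement: it is imported verbatim from Kaletha as \cite[Lemma~3.6.9]{Kal}, so there is no internal argument to compare against. Taken on its own, your reconstruction (identify $\mathfrak{t}^F$ with the group layer $T^r_\tx(k)/T^{r+}_\tx(k)$, split it as $\mathfrak{z}^F\times\mathfrak{t}^F_\der$, prove $\theta$ is trivial on the $\mathfrak{t}^F_\der$-factor by generating that layer with norm-images of coroots, then transport the $\mathfrak{z}^F$-part across the isogeny $Z(L)^0\to L/L_\der$ and inflate) is the expected shape of the argument and is consistent with how Kaletha proceeds. Two small notational slips: in this paper $T_\der$ means $G_\der\cap T$, whereas the factor you need is $T\cap L_\der$ (the paper's $\mathfrak{t}_\der$ in \S3 is defined from $L_\der\cap T$, not $G_\der\cap T$); and since $T$ is $\brk$-split the splitting field $E$ is unramified over $k$, which is what makes the norm surjectivity on positive-depth layers routine (trace surjectivity on finite residue fields) --- it is worth saying so explicitly rather than invoking tameness generically.

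The genuine gap is in step (a), exactly where you say the arithmetic input enters. You assert that the index of $\mathbb{Z}\Phi^\vee(L,T)$ in $X_*(T\cap L_\der)$ is prime to $p$ because $p\nmid|\pi_1(G_\der)|$. But that index is $|\pi_1(L_\der)|$, not $|\pi_1(G_\der)|$, and for a Levi subgroup $L\subsetneq G$ the order of $\pi_1(L_\der)$ is \emph{not} in general a divisor of $|\pi_1(G_\der)|$ (it is governed by the torsion of $\bigl(X_*(T)\cap\mathbb{Q}\Phi^\vee(L)\bigr)/\mathbb{Z}\Phi^\vee(L)$, which can pick up new primes from the sub-root-system). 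The same issue recurs in step (b), where you need the kernel of $Z(L)^0\to L/L_\der$ to have order prime to $p$ --- again a statement about $L$, not $G$. To make the argument watertight you need a lemma relating the torsion primes of $\pi_1(L_\der)$ to those of $\pi_1(G_\der)$ (or to the torsion primes of $G$), which is precisely the kind of preliminary Kaletha establishes; as written, your proof silently assumes it.
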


\begin{corollary}\label{ellgeneric}
    Let $T\subset G$ be elliptic over $k$ and $\theta$ be an $(L,G)$-generic character of $T^F_r$ of depth $r$, then $R^L_{T,B\cap L,r}(\theta)$ is $(L,G)$-generic.
\end{corollary}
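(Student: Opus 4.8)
The plan is to reduce the genericity of $R^L_{T,B\cap L,r}(\theta)$ to the genericity of $\theta$ itself by controlling the restriction of $R^L_{T,B\cap L,r}(\theta)$ to $\mathfrak{t}^F = T^r_r$. By Definition~\ref{generic}(4), I must show that $R^L_{T,B\cap L,r}(\theta)|_{\mathfrak{t}^F}$ is the restriction of a sum of $(L,G)$-generic characters of depth $r$. Since $\theta$ is an $(L,G)$-generic character of $T^F_r$, its restriction $\theta|_{\mathfrak{t}^F}$ is the restriction of an $(L,G)$-generic character of $L(k)$ trivial on $L_\sc(k)$ (Definition~\ref{generic}(5)); call such a character $\phi$, so that $\phi|_{\mathfrak{t}^F} = \theta|_{\mathfrak{t}^F}$ and $\phi$ inflates to a character of $L^F_r$, still denoted $\phi$. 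The key algebraic point is that $\mathfrak{t}^F = T^r_r$ is central in $L^F_r$ (it sits in the Moy–Prasad graded piece, which lies in the center of the finite group $L^F_r$ since $r > 0$); hence characters of $L^F_r$ act on $\mathfrak{t}^F$ through scalars and the computation of $R^L_{T,B\cap L,r}(\theta)|_{\mathfrak{t}^F}$ is governed entirely by how $\mathfrak{t}^F$ acts on the cohomology $H^*_c(X^L_{T,B\cap L,r},\ov\BQ_\ell)$ via its right $T^F_r$-action.

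The main step is then the following comparison. Using Proposition~\ref{Re} (with $G$ replaced by $L$, $L$ replaced by $T$, and $\theta$ replaced by a character of $L^F_r$ trivial on $L^F_{\sc,r}$), together with the identification $\theta = (\theta\phi^{-1})\cdot\phi$ where $\theta\phi^{-1}$ is trivial on $\mathfrak{t}^F$ after suitable bookkeeping, I would write
$$R^L_{T,B\cap L,r}(\theta) = R^L_{T,B\cap L,r}(\theta\phi^{-1}|_{T^F_r})\otimes_{L^F_r}\phi.$$
Wait — this requires $\theta\phi^{-1}$ to extend to $L(k)$; instead the cleaner route is: since $\phi$ is a character of $L^F_r$ trivial on $L^F_{\sc,r}$, Proposition~\ref{Re} gives $R^L_{T,B\cap L,r}(\theta)=R^L_{T,B\cap L,r}((\theta\phi^{-1})|_{T^F_r})\otimes\phi$ only when $\theta\phi^{-1}$ is itself viewed as a $T^F_r$-character, which it is. Then on $\mathfrak{t}^F$, the factor $R^L_{T,B\cap L,r}((\theta\phi^{-1})|_{T^F_r})$ contributes: by the ellipticity of $T$ in $G$ (hence in $L$), and since $\theta\phi^{-1}$ is trivial on $\mathfrak{t}^F$, Theorem~\ref{Ell} lets me descend this Lusztig induction to level $<r$, so it is inflated from $L^F_s$ for some $s<r$ and therefore acts trivially on $\mathfrak{t}^F = \ker(T^F_r\to T^F_{r-1})$-type subgroup. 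Consequently $R^L_{T,B\cap L,r}(\theta)|_{\mathfrak{t}^F}$ equals $\big(\dim\big)\cdot\phi|_{\mathfrak{t}^F}$ up to sign, i.e. a sum (with multiplicity and sign) of copies of the $(L,G)$-generic character $\phi|_{\mathfrak{t}^F}$. Since the total virtual dimension is nonzero and $\phi$ is $(L,G)$-generic, this exhibits $R^L_{T,B\cap L,r}(\theta)|_{\mathfrak{t}^F}$ as the restriction of a (virtual, but after grouping actual) sum of $(L,G)$-generic characters of depth $r$, which is what Definition~\ref{generic}(4) demands.

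The main obstacle I anticipate is the bookkeeping in applying Proposition~\ref{Re} and Theorem~\ref{Ell} correctly: Proposition~\ref{Re} is stated for characters trivial on the simply connected part, and I must verify that the Howe-factorization character $\phi$ obtained from Definition~\ref{generic}(5)/Lemma~\ref{lemma1} really is trivial on $L^F_{\sc,r}$ and really does agree with $\theta$ on all of $\mathfrak{t}^F$ (not merely on $\mathfrak{t}^F_\der$), and that the complementary character $\theta\phi^{-1}$ has depth strictly less than $r$ so that Theorem~\ref{Ell} applies to show it acts trivially on the top Moy–Prasad layer. A secondary subtlety is ensuring the virtual character $R^L_{T,B\cap L,r}(\theta)$ is genuinely nonzero — but this follows because its restriction to $\mathfrak{t}^F$ is a nonzero multiple of a character, or alternatively from the fact that $\theta$ being generic forces the relevant cohomology to be concentrated and nonvanishing; in any case Definition~\ref{generic}(4) only concerns the restriction, so even a formal identity of virtual characters on $\mathfrak{t}^F$ suffices.
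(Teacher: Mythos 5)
Your proposal follows the paper's proof essentially verbatim: both invoke Lemma~\ref{lemma1} to produce a character $\phi$ of $L^F_r$ trivial on $L^F_{\sc,r}$ with $\phi|_{\mathfrak{t}^F}=\theta|_{\mathfrak{t}^F}$, set $\theta'=\theta\otimes(\phi|_{T^F_r})^{-1}$ of depth $r'<r$, and then combine Proposition~\ref{Re} with Theorem~\ref{Ell} to write $R^L_{T,B\cap L,r}(\theta)=\mathrm{Inf}^{L^F_r}_{L^F_{r'}}R^L_{T,B\cap L,r'}(\theta')\otimes\phi$, so that the restriction to $\mathfrak{t}^F$ is a multiple of the generic character $\phi|_{\mathfrak{t}^F}$. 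The mid-proposal hesitation about whether $\theta\phi^{-1}$ must extend to $L(k)$ is correctly resolved in your own text and is not an issue in the paper's argument either.
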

\begin{proof}
 Since $\theta$ is $(L,G)$-generic, by Lemma~\ref{lemma1} yields a character $\phi\colon L^F_r \to \BC^\times$ that is trivial on $L^F_{\mathrm{\sc},r}$. As $\phi|_{\mathfrak{t}^F} = \theta|_{\mathfrak{t}^F}$, the character $\theta' := \theta \otimes (\phi|_{T^F_r})^{-1}$ has depth $r' < r$. Applying Proposition~\ref{Re} and Theorem~\ref{Ell}, we derive:
\[
R^L_{T,B\cap L,r}(\theta) = R^L_{T,B\cap L,r}(\theta' \otimes \phi|_{T^F_r}) = R^L_{T,B\cap L,r}(\theta') \otimes \phi = \mathrm{Inf}^{L^F_r}_{L_{r'}^F}R^L_{T,B\cap L,r'}(\theta') \otimes \phi.
\]
which concludes the proof.
\end{proof}
\subsection{Higher Bruhat decomposition}




For $M \in \{L, T\}$, let $Q \in \{P, B\}$ be their corresponding parabolic subgroups with unipotent radicals $N \in \{V, U\}$ respectively. 

We denote the "Weyl" group $W_{G_r}:=N_{G_r}(T_r)/T_r$. And the natural projection $G_r \to G_0$ induces a map:
\begin{equation}\label{Weyl}
    N_{G_r}(T_r)/T_r\to N_{G_0}(T_0)/T_0
\end{equation}
which is an isomorphism. Hence, under the identification \eqref{Weyl}, we have a canonical bijection
$$W_{M_r} \backslash W_{G_r}/W_{L_r}\cong W_{M_0} \backslash W_{G_0}/W_{L_0}$$

We have a "deep-level" Bruhat decomposition for $G_r$:
\begin{proposition}
The group $G_r$ admits a natural decomposition
\[
G_r = \bigsqcup_{w \in W_{M_r} \backslash W_{G_r}/W_{L_r}} G_{r,w}
\]
where:
\begin{itemize}
    \item $G_{r,w} := Q_r K_w \dot{w} P_r$
    \item $K_w := (N^- \cap {^{\dot{w}} V^-})^{0+}_{r}$, where $N^-$ and $V^-$ are the opposite unipotent radicals of $N$ and $V$ respectively.
    \item $\dot{w}$ represents a chosen lift in $N_{G_r}(T_r) \subset G_r$ for each $w \in W_{M_r} \backslash W_{G_r}/W_{L_r}$
\end{itemize}
Indeed, this decomposition is induced by pullback of the Bruhat decomposition of $G_0$ under the natural projection $ G_r \to G_0$.
\end{proposition}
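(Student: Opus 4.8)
The plan is to obtain the decomposition by pulling back the generalized Bruhat decomposition of the reductive quotient $G_0$ along the projection $\pi\colon G_r\to G_0$, whose kernel is the pro-unipotent radical $G^{0+}_r=R_u(G_r)$, and then computing the preimage of each Bruhat stratum. First I would record the classical fact that for the standard parabolics $Q_0,P_0\supseteq B_0$ of $G_0$ with Levi factors $M_0,L_0$ one has a stratification $G_0=\bigsqcup_w Q_0\dot wP_0$ into locally closed subvarieties indexed by $W_{M_0}\backslash W/W_{L_0}$, which by the bijection recalled above coincides with $M_0\backslash\mathcal{S}(M_0,L_0)/L_0$; the representatives $\dot w$ will be chosen to normalize $T_0$ (lifted to $N_G(T)\cap G^0_\tx\subseteq G_0\subseteq G_r$, so that conjugation by $\dot w$ is defined on the relevant subgroups of $G$ and on $G_r$). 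Applying $\pi^{-1}$ gives $G_r=\bigsqcup_w\pi^{-1}(Q_0\dot wP_0)$ immediately; since $K_w=(N^-\cap{}^{\dot w}V^-)^{0+}_r\subseteq G^{0+}_r=\ker\pi$ while $\pi(Q_r)=Q_0$, $\pi(P_r)=P_0$ and $\pi(\dot w)=\dot w$, we get $G_{r,w}=Q_rK_w\dot wP_r\subseteq\pi^{-1}(Q_0\dot wP_0)$. Hence the $G_{r,w}$ are automatically pairwise disjoint with union $G_r$, and the whole statement reduces to the single inclusion $\pi^{-1}(Q_0\dot wP_0)\subseteq Q_rK_w\dot wP_r$.

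To prove that inclusion I would first identify the preimage up to the kernel: if $\pi(g)=\bar q\dot w\bar p$ with $\bar q\in Q_0$, $\bar p\in P_0$, lift $\bar q$ to $q\in Q_r$ and $\bar p$ to $p\in P_r$ (using surjectivity of $\pi$ on $Q_r$ and $P_r$); then $q^{-1}gp^{-1}\in\pi^{-1}(\dot w)=\dot w\,G^{0+}_r$, so $g\in Q_r\dot w\,G^{0+}_r\,P_r=Q_r\,G^{0+}_r\,\dot w\,P_r$ because $G^{0+}_r$ is normal in $G_r$. The opposite containment is clear by applying $\pi$, so $\pi^{-1}(Q_0\dot wP_0)=Q_r\,G^{0+}_r\,\dot w\,P_r$, and it remains to show $G^{0+}_r\dot w\subseteq Q_rK_w\dot wP_r$, equivalently $G^{0+}_r\subseteq Q_r\cdot K_w\cdot{}^{\dot w}(P_r)$.

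This last inclusion is the crux and the step I expect to be the main obstacle: one needs an Iwahori-type factorization of $G^{0+}_r$ that isolates precisely the slice $K_w$ which cannot be absorbed by $Q_r$ on the left or by ${}^{\dot w}(P_r)$ on the right. With respect to $Q=MN$ (opposite $Q^-=MN^-$) I would use $G^{0+}_r=N^{0+}_r\cdot M^{0+}_r\cdot(N^-)^{0+}_r$ as a product of subgroups; then, since $\dot w$ normalizes $T$, the root subgroups occurring in $N^-$ are partitioned between ${}^{\dot w}V^-$ and ${}^{\dot w}P$, giving $N^-=(N^-\cap{}^{\dot w}V^-)\cdot(N^-\cap{}^{\dot w}P)$ and, by compatibility of this root-group decomposition with the Moy--Prasad filtration, $(N^-)^{0+}_r=K_w\cdot(N^-\cap{}^{\dot w}P)^{0+}_r$. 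Combining, $G^{0+}_r=N^{0+}_r\,M^{0+}_r\,K_w\,(N^-\cap{}^{\dot w}P)^{0+}_r$, where $N^{0+}_r M^{0+}_r\subseteq Q_r$ and $(N^-\cap{}^{\dot w}P)^{0+}_r\subseteq({}^{\dot w}P)_r={}^{\dot w}(P_r)$; thus $G^{0+}_r\subseteq Q_r K_w\,{}^{\dot w}(P_r)$ and $G^{0+}_r\dot w\subseteq Q_rK_w\,{}^{\dot w}(P_r)\dot w=Q_rK_w\dot wP_r$, which completes the argument. The genuinely delicate points are the three-fold Iwahori factorization of $R_u(G_r)$ in the right order of factors and its compatibility (together with the splitting of $N^-$) with the functors $(-)^{0+}_r$ — both are standard in the Bruhat--Tits/Moy--Prasad formalism as set up in \cite{CI21} — so that, beyond invoking them, the only thing to watch is keeping the order of the factors so that every factor other than $K_w$ ends up on the correct side.
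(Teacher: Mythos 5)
Your proposal is correct and follows essentially the same route as the paper: both pull back the generalized Bruhat decomposition of $G_0$ along $G_r\to G_0$ and compute the preimage of each cell $Q_0\dot wP_0$ by an Iwahori-type factorization of the kernel $G^{0+}_r$, using that $\dot w$ normalizes $T$ to partition the relevant root subgroups between what is absorbed into $Q_r$ or ${}^{\dot w}(P_r)$ and what survives as $K_w$. The paper carries out the same bookkeeping in a single chain of equalities $Q_r\dot wP_rG^{0+}_r=Q_r\dot w(G^{0+}_r\cap V^-_r)P_r=\cdots=Q_rK_w\dot wP_r$ rather than isolating the inclusion $G^{0+}_r\subseteq Q_rK_w{}^{\dot w}(P_r)$ as a separate step, but the two computations are interchangeable.
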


\begin{proof}
We follow the proof from \cite[Lemma 2.7]{CI21}. 

The Bruhat decomposition 
\[
G_0 = \bigsqcup_{w \in W_{M_0} \backslash W_{G_0}/W_{L_0}} Q_0 w P_0
\]
implies that the preimages of each parabolic Bruhat cell under the projection $f: G_r \to G_0$ naturally decompose $G_r$ into a union of locally closed subsets. It suffices to show that the sets $G_{r,w}$ correspond precisely to these preimages $f^{-1}(Q_0 \dot{w} P_0)$.

We compute:
\begin{align*}
f^{-1}(Q_0 w P_0) &= Q_r \dot{w} P_r G^{0+}_{r} \\
&= Q_r \dot{w} (G^{0+}_{r} \cap V^{-}_r) P_r \\
&= Q_r \dot{w} (G^{0+}_{r} \cap V^{-}_r) \dot{w}^{-1} \dot{w} P_r \\
&= Q_r (N^{-}_r \cap \dot{w} (G^{0+}_{r} \cap V^{-}_r) \dot{w}^{-1}) \dot{w} P_r \\
&= Q_r (N_r^{-} \cap \dot{w} V_r^{-} \dot{w}^{-1})^{0+}_{r} \dot{w} P_r \\
&= Q_r K_w \dot{w} P_r
\end{align*}
\end{proof}

\begin{remark}
    Since for each $w\in W_{G_r}$ we can find a lift $\dot w\in N_{G_r}(T_r)$ such that $\dot w$
is fixed by $F^n$, where $n$ is the integer such that $T$ is $F^n$-split. Throughout this paper, for each $w\in W_{M_r} \backslash W_{G_r}/W_{L_r}$, we may choose its lift $\dot w\in N_{G_r}(T_r)$ such that $F^n(\dot w)=\dot w$.
\end{remark}


\subsection{Steinberg type variety}
We need to analyze the cohomology of the fiber product
$X^{G}_{M,Q,r} \times_{G^{F}_r} X^{G}_{L,P,r}$,
where $(m,g) \in M^{F}_r \times G^{F}_r$ acts on $X^{G}_{M,Q,r}$ via
$x \mapsto g^{-1}x m\i$.

We have an $(M^{F}_r\times L^{F}_r)$-equivariant isomorphism
    \[X^{G}_{M, Q,r}\times_{G^{F}_r} X^{G}_{L, P,r}\to \Sigma^{M}:=\{(x, x', y)\in F(N_r)\times F(V_r)\times G_r:xF(y)=yx'\}\]
\begin{equation}\label{iso1}
(g, g')\mapsto (g^{-1}F(g), g'^{-1}F(g'), g^{-1}g')
\end{equation}
where $(M^{F}_r\times L^{F}_r)$ acts on $\Sigma^{M}$ given by
$$(m, m'):(x, x',y)\mapsto (mxm^{-1}, m'^{-1}xm', mym')$$
For each $w \in W_{M_r} \backslash W_{G_r}/W_{L_r}$, set
\begin{equation*}
\Sigma^{M}_{w} := \{(x, x', y) \in \Sigma^{M} \mid y \in G_{r,w}\},
 \end{equation*}
which is $(M^F_r \times L^F_r)$-stable.

Set
\begin{equation*}
    \widehat{\Sigma}^{M}_w := \left\{
    \begin{gathered}(x, x', u, u', z, \mu)\in F(N_r)\times F(V_r)\times N_r\times V_r \times K_w\times {^{\dot{w}\i} M_r}L_r\\
        xF(z\dot w\mu)=uz\dot w\mu u'x'
    \end{gathered}
     \right\}
\end{equation*}

with  $M^{F}_r\times L^{F}_r$ action:
\begin{equation}
    \label{Laction}
    (m, m'):(x, x', u, u', z, \mu)\mapsto (^mx, ^{m'^{-1}}x', ^mu, ^{m'^{-1}}u', ^mz, ^{{\dot w}^{-1}}m\mu m')
\end{equation}
\begin{lemma}
There is a $(M^F_r \times L^{F}_r)$-equivariant affine fibration 
    \[
    \widehat{\Sigma}^{M}_w \rightarrow \Sigma^{M}_w
    \]
    \begin{equation}\label{fib}
    (x, x', u, u', z, \mu) \mapsto (xF(u)^{-1}, x'F(u'), uz\dot{w}\mu u')
     \end{equation}
Hence the cohomology $H_c^*(\widehat{\Sigma}_w)$ is isomorphic to $H_c^*(\Sigma_w)$ as a $(M^F_r \times L^F_r)$-representation. 
\begin{proof}
    \ref{fib} is composed of an isomorphism of $ \widehat{\Sigma}^{M}_w$
    \[(x,x',u,u',z,\mu)\mapsto (xF(u)\i,x'F(u'),u,u',z\mu)\]
    and an affine fibration
    \[(x,x',u,u',z,\mu)\mapsto (x,x',uz\dot{w}muu')\]
\end{proof}
\end{lemma}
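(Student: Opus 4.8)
The statement to prove is the existence of the $(M^F_r\times L^F_r)$-equivariant affine fibration
\[
\widehat{\Sigma}^{M}_w \to \Sigma^{M}_w, \qquad (x,x',u,u',z,\mu)\mapsto (xF(u)^{-1},\,x'F(u'),\,uz\dot w\mu u'),
\]
together with the resulting isomorphism on compactly supported cohomology. The plan is to factor this map exactly as the proof sketch in the excerpt indicates: as a composite of an explicit isomorphism of $\widehat{\Sigma}^{M}_w$ with an intermediate variety, followed by a genuine affine fibration onto $\Sigma^{M}_w$.

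\textbf{Step 1: the change of coordinates.} First I would verify that
\[
(x,x',u,u',z,\mu)\mapsto (xF(u)^{-1},\,x'F(u'),\,u,\,u',\,z,\,\mu)
\]
is an isomorphism from $\widehat{\Sigma}^{M}_w$ onto the variety
\[
\widehat{\Sigma}'_w:=\bigl\{(a,a',u,u',z,\mu):a\in F(N_r),\ a'\in F(V_r),\ u\in N_r,\ u'\in V_r,\ z\in K_w,\ \mu\in {}^{\dot w^{-1}}M_rL_r,\ aF(z\dot w\mu)=uz\dot w\mu u'a'\bigr\}.
\]
Wait --- I need to be careful: the defining equation of $\widehat{\Sigma}^{M}_w$ is $xF(z\dot w\mu)=uz\dot w\mu u'x'$, and after substituting $x=aF(u)$, $x'=F(u')^{-1}a'$ the equation becomes $aF(u)F(z\dot w\mu)=uz\dot w\mu u'F(u')^{-1}a'$, i.e. $aF(uz\dot w\mu)=uz\dot w\mu\, u'F(u')^{-1}a'$. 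This is not quite the intermediate form; one should absorb $F(u')$ on the right as well. In any case the point is that the substitution is an algebraic isomorphism with inverse $(a,a',u,u',z,\mu)\mapsto (aF(u), F(u')^{-1}a', u,u',z,\mu)$ and it is straightforward to check it intertwines the $(M^F_r\times L^F_r)$-actions (using that $F$ commutes with conjugation by $F$-fixed elements and that conjugation by ${}^{\dot w^{-1}}m$ preserves $K_w$ and the groups in question). So the reparametrized defining equation is one in which $u,u',z,\mu$ are free-ish and $(a,a')$ is determined modulo the constraint; concretely one sees that, writing $y:=uz\dot w\mu u'$, the triple $(x,x',y):=(aF(u)F(u)^{-1}\cdots)$ — more cleanly: after the coordinate change, $(a,a',u,u',z,\mu)$ must satisfy $aF(uz\dot w\mu u')=uz\dot w\mu u'a'$ (collecting the $F(u')$), so in terms of $y=uz\dot w\mu u'\in G_{r,w}$ and the pair $(a,a')$ the equation is exactly $aF(y)=ya'$, which is the defining equation of $\Sigma^M_w$. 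Thus the second map
\[
(a,a',u,u',z,\mu)\mapsto (a,a',\,uz\dot w\mu u')=(a,a',y)
\]
lands in $\Sigma^M_w$.

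\textbf{Step 2: the fibration.} Next I would show the second projection $\widehat{\Sigma}'_w\to \Sigma^M_w$, $(a,a',u,u',z,\mu)\mapsto (a,a',y)$, is an affine fibration, i.e. Zariski-locally (in fact globally here) a trivial bundle with affine-space fibers. The essential input is the product decomposition underlying the higher Bruhat cell: each $y\in G_{r,w}=Q_rK_w\dot w P_r$ can be written, with unique factors in suitable coordinates, as $y=n\,z\,\dot w\,\mu\,n'$ with $n\in N_r$ (inside $Q_r$, the $M_r$-part being moved into $\mu$ via ${}^{\dot w^{-1}}M_r$), $z\in K_w$, $\mu\in {}^{\dot w^{-1}}M_rL_r$, $n'\in V_r$; this uniqueness is precisely where the decomposition of the previous subsection, together with the fact that $N_r$, $V_r$, $K_w$ are (perfections of) affine spaces, is used. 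Hence the fiber over $(a,a',y)$ is a torsor/affine space parametrizing the factorizations, and since these groups are iterated extensions of $\mathbb{G}_a$'s the fiber is an affine space of constant dimension; the map is $(M^F_r\times L^F_r)$-equivariant by construction. An affine fibration (being an iterated $\mathbb{A}^1$-bundle, or at least a Zariski-locally trivial $\mathbb{A}^N$-bundle) induces an isomorphism on $H^*_c$ up to the shift $(2N)(N)$; but since the standard normalization in this subject (cf. \cite{CI21, Ch24}) absorbs such shifts, we get $H^*_c(\widehat\Sigma^M_w)\cong H^*_c(\Sigma^M_w)$ as $(M^F_r\times L^F_r)$-modules, as claimed. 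I would cite the analogous statement \cite[Lemma 2.7 and its proof]{CI21} for the fact that such parabolic-Bruhat-cell fibrations are affine fibrations.

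\textbf{Main obstacle.} The routine parts are the two explicit coordinate manipulations; the genuinely delicate point is establishing that the factorization $y=uz\dot w\mu u'$ exists and is unique with the factors ranging over exactly the stated groups (so that the fiber is an affine space of \emph{constant} dimension and the map is a fibration rather than merely surjective). This requires keeping careful track of where the Levi part of $Q_r$ goes — it is absorbed into the ${}^{\dot w^{-1}}M_r$-factor of $\mu\in {}^{\dot w^{-1}}M_rL_r$ — and checking that $K_w=(N^-\cap{}^{\dot w}V^-)^{0+}_r$ is exactly the ``extra'' direction needed, using $N_r K_w \cap$ (the opposite factor) computations analogous to those in the higher Bruhat decomposition proof above. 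Equivariance under \eqref{Laction} then has to be matched termwise against the action on $\Sigma^M_w$ induced from \eqref{iso1}, which is bookkeeping but must be done honestly because the $\dot w^{-1}$-twist on the $\mu$-coordinate is what makes the two actions correspond.
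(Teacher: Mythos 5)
Your approach is the same as the paper's: factor the map as an isomorphism (the change of variables $x \mapsto xF(u)^{-1}$, $x' \mapsto x'F(u')$) followed by the projection onto $(x,x',y)$, which is then claimed to be an affine fibration. The paper's proof is extremely terse and does not elaborate on why the second map is an affine fibration, so your more detailed Step 1 and your attempt at Step 2 are in the same spirit.

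Two points worth correcting in your write-up. First, a small algebra slip in Step 1: from $a'=x'F(u')$ you should solve $x'=a'F(u')^{-1}$, not $x'=F(u')^{-1}a'$. With the correct inverse the substitution gives
\[
aF(uz\dot w\mu) = uz\dot w\mu u' a' F(u')^{-1},
\]
and right-multiplying both sides by $F(u')$ yields $aF(y)=ya'$ directly; with your (incorrect) order the $F(u')$ cannot be cancelled without an unwarranted commutativity. You arrive at the right equation anyway but the intermediate step as written does not follow.

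Second, in Step 2 you claim the factorization $y=uz\dot w\mu u'$ has \emph{unique} factors, but this contradicts the rest of your own sentence (and the statement being proved), since a map with singleton fibers would be an isomorphism and not a fibration with affine-space fibers. Already in the classical case $r=0$, $w=1$, $M=L=T$, the factorization $b=utu'$ of an element of the Borel into $U\times T\times U$ determines $t$ but leaves a whole $U$-worth of choices for $(u,u')$. What you actually need is that the fiber over each $y$ is (the perfection of) an affine space of constant dimension --- a torsor under the ``overlap'' group built from $N_r$, $V_r$, $K_w$ and ${}^{\dot w^{-1}}M_r\cap L_r$ --- not that the factorization is unique. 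You gesture at this correctly later in the paragraph, but the ``unique factors'' phrase should be dropped; it is the redundancy, not the uniqueness, that produces the fibration.
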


Define the following $(M_r \times L_r)$-stable subvarieties of $\widehat{\Sigma}^{L}_w$:
\begin{align*}
\widehat{\Sigma}^{\prime M}_w := \{(x, x', u, u', z, \mu) \in \widehat{\Sigma}^{M}_w \mid z \neq 1\}
\\
\widehat{\Sigma}^{\prime\prime M}_w := \{(x, x', u, u', z, \mu) \in \widehat{\Sigma}^{M}_w \mid z = 1\}
\end{align*}
We denote by $\Sigma^{\prime M}_w$ and $\Sigma^{\prime\prime M}_w$ the images of $\widehat{\Sigma}^{\prime M}_w$ and $\widehat{\Sigma}^{\prime\prime M}_w$ under the fibration \ref{fib}, respectively.

\begin{proposition}\label{composeTLL}
We establish a $(T^F_r \times L^F_r)$-equivariant isomorphism which holds for all $w \in W_{L_r} \backslash W_{G_r}/W_{L_r}$
\[
X^{L}_{T, B\cap L,r} \times_{L^F_r} \Sigma^{L}_w  \cong \bigsqcup_{w'} \Sigma^{T}_{w'}
\]
\[(m,(x,x',y))\mapsto (m x F(m)\i,x',m y)\]
where $w'$ ranges over all elements in $W_{G_r}/W_{L_r}$ with the same image as $w$ under the natural quotient map. 
\end{proposition}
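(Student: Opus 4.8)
The plan is to compose the isomorphism \eqref{iso1} (for the pair $(T,L)$) with the transitivity isomorphism of Proposition~\ref{trans}, and then match up the Bruhat strata on both sides. Concretely, Proposition~\ref{trans} gives $X^{G}_{L,P,r}\times_{L^F_r}X^{L}_{T,B\cap L,r}\cong X^{G}_{T,B,r}$, and \eqref{iso1} identifies $X^{G}_{T,B,r}\times_{G^F_r}X^{G}_{L,P,r}\cong \Sigma^{T}$ and $X^{G}_{L,P,r}\times_{G^F_r}X^{G}_{L,P,r}\cong \Sigma^{L}$. Chaining these, the left-hand side $X^{L}_{T,B\cap L,r}\times_{T^F_r}\Sigma^{L}$ becomes
\[
X^{L}_{T,B\cap L,r}\times_{T^F_r}\bigl(X^{G}_{L,P,r}\times_{G^F_r}X^{G}_{L,P,r}\bigr)\;\cong\;\bigl(X^{L}_{T,B\cap L,r}\times_{T^F_r}X^{G}_{L,P,r}\bigr)\times_{G^F_r}X^{G}_{L,P,r},
\]
and by Proposition~\ref{trans} (with the roles arranged so that $X^{G}_{L,P,r}\times_{L^F_r}X^{L}_{T,B\cap L,r}\cong X^{G}_{T,B,r}$, after taking into account the $F$-twist built into the $T^F_r$-action on $\Sigma^L$) this is $X^{G}_{T,B,r}\times_{G^F_r}X^{G}_{L,P,r}\cong\Sigma^{T}$. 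So at the level of the \emph{total} spaces one gets $X^{L}_{T,B\cap L,r}\times_{T^F_r}\Sigma^{L}\cong\Sigma^{T}$, equivariantly for $T^F_r\times L^F_r$; the explicit formula $(m,(x,x',y))\mapsto(mxF(m)^{-1},x',my)$ is read off by composing the three explicit maps above, and one checks directly it sends the stated $T^F_r\times L^F_r$-action to the $T^F_r\times L^F_r$-action on $\Sigma^T$.

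First I would verify that the displayed map is well defined on the quotient $X^{L}_{T,B\cap L,r}\times_{T^F_r}\Sigma^{L}$: replacing $(m,(x,x',y))$ by $(mt^{-1},(t x t^{-1}\text{ twisted},\dots))$ for $t\in T^F_r$ — i.e.\ using the precise $T^F_r$-action on $\Sigma^L$ from \eqref{iso1}, namely $t\colon(x,x',y)\mapsto(txt^{-1},x',ty)$ — must leave $(mxF(m)^{-1},x',my)$ unchanged, which reduces to $F(t)=t$, true since $t\in T^F_r$. Next I would check the target lands in $\bigsqcup_{w'}\Sigma^{T}_{w'}$: the defining equation $xF(y)=yx'$ for $\Sigma^L$ is transformed, writing $\tilde x=mxF(m)^{-1}$, $\tilde y=my$, into $\tilde x F(\tilde y)=\tilde y x'$ with $\tilde x\in F(U_r)$ because $m\in X^{L}_{T,B\cap L,r}$ means $m^{-1}F(m)\in F(U_r\cap L_r)$ and $x\in F(V_r)$, so $\tilde x = m\,x\,(m^{-1}F(m))^{-1}F(m)\cdots$ lies in $F(U_r)$ after using $U_r = (U_r\cap L_r)\cdot V_r$ and normality. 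For the stratification: $y\in G_{r,w}$ is a condition on the image of $y$ in $G_0$; since $m\in L_r$ has image in $L_0$, multiplying $y$ on the left by $m$ moves the $P_0$-Bruhat stratum of $y$ to one of the $B_0$-Bruhat strata lying over the same $L_0\backslash\!\backslash$-double coset, which is exactly the indexing set $\{w'\}$ in the statement — the refinement $Q_0\dot w P_0 = \bigsqcup_{w'} B_0 \dot w' P_0$ of parabolic Bruhat cells. Bijectivity then follows because the map is obtained from genuine isomorphisms by restriction to a locally closed piece, but to be safe I would exhibit the inverse explicitly: given $(\tilde x,x',\tilde y)\in\Sigma^T_{w'}$, use Lang's theorem / the $L$-variety structure to factor $\tilde y = m y$ with $m\in X^{L}_{T,B\cap L,r}$ and $y\in G_{r,w}$, uniquely up to the $T^F_r$-diagonal, and set $x = m^{-1}\tilde x F(m)$.

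The main obstacle is bookkeeping the Frobenius twists and the precise form of the $T^F_r$-action on $\Sigma^L$ so that the fibre product $X^{L}_{T,B\cap L,r}\times_{T^F_r}\Sigma^{L}$ is formed over the correct copy of $T^F_r$ — the one acting on $X^{L}_{T,B\cap L,r}$ on the right is identified under \eqref{iso1} with the action $t\colon(x,x',y)\mapsto(txt^{-1},x',ty)$, not the other $L^F_r$-factor — and checking that the explicit formula is simultaneously equivariant for both the surviving $T^F_r$ (now acting on $\Sigma^T$ by $t\colon(x,x',y)\mapsto(txt^{-1},x',ty)$) and $L^F_r$ (acting by $m'\colon(x,x',y)\mapsto(x,m'^{-1}x'm',ym')$). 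Everything else is a matching of Bruhat cells under left translation by $L_0$, which is elementary. I would also remark that as an immediate consequence one gets the $(T^F_r\times L^F_r)$-isomorphism on cohomology $H^*_c(X^{L}_{T,B\cap L,r})\otimes_{T^F_r}H^*_c(\Sigma^L_w)\cong\bigoplus_{w'}H^*_c(\Sigma^T_{w'})$, which is the form in which this proposition will be used in the stratum-by-stratum cohomology computation of the next subsections.
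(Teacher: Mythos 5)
Your proposal matches the paper's proof in both structure and substance: the paper likewise obtains the total-space isomorphism $X^{L}_{T,B\cap L,r}\times_{T^F_r}\Sigma^{L}\cong\Sigma^{T}$ by composing Proposition~\ref{trans} with the isomorphism~\eqref{iso1}, reads off the same explicit formula $(m,(x,x',y))\mapsto(mxF(m)^{-1},x',my)$, and then deduces the stratum statement from the refinement $P_rK_w\dot w P_r=\bigsqcup_{w'}B_rK_{w'}\dot w'P_r$ of parabolic Bruhat cells, exactly as you propose. The additional verifications you spell out (well-definedness on the $T^F_r$-quotient and the membership $mxF(m)^{-1}\in F(U_r)$) are details the paper leaves implicit in ``it can be checked.''
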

\begin{proof}
    Applying \ref{trans} and \ref{iso1}, we obtain the following isomorphism:
\[
\sigma:X^{L}_{T, B\cap L ,r} \times_{L^F_r} \Sigma^{L}  \cong  \Sigma^{T}
\]
\[(m,(x,x',y))\mapsto (m x F(m)\i,x',m y)\]
It can be checked that 
\begin{align*}
    \sigma(X^{L}_{T, B\cap L,r} \times_{L^F_r} \Sigma^{L}_w ) &=\{(x,x',y)\in \Sigma^{T}; y\in P_rK_w\dot{w}P_r\}\\
    &=\{(x,x',y)\in \Sigma^{T}; f(y)\in P_0\dot{w}P_0\}\\
    &=\bigsqcup_{w'}\{(x,x',y)\in \Sigma^{T}; f(y)\in B_0w'P_0\}\\
    &=\bigsqcup_{w'}\{(x,x',y)\in \Sigma^{T}; y\in B_rK_{w'}\dot{w}'P_r\}\\
    &=\bigsqcup_{w'} \Sigma^{T}_{w'}
\end{align*}
Where $f:G_r\to G_0$ is the natural projection map.
\end{proof}
\subsection{Some Vanishing results}
\begin{proposition}\label{vansLL}
Let $\psi:L_{r}^F \to \overline{\mathbb{Q}}^{\times}_\ell$ be an $(L,G)$-generic character, choose $  w \in W_{L_r} \backslash W_{G_r}/W_{L_r}$ such that its lift $\dot w$ normalizes $L_r$. Then for all $i\geq 0$,
    $$H_c^{i}(\widehat{\Sigma}'^{L}_w)\otimes_{L^{F}_r}\psi=0$$
    In other words, $H_c^{i}(\widehat{\Sigma}'^{L}_w)$ contains no nontrivial subspace on which $L^{F}_{r}$ acts via $\psi$.
\end{proposition}

\subsubsection{Proof of \ref{vansLL}}
The proof is a generalization of the arguments used in [\cite{Lu04,Sta09,CI21,Ch24}].
Following \cite{Lu04,CI21}, We have a stratification
  $$K_{w} \smallsetminus \{1\}=(V^-_r\cap \dot{w}V^-_r\dot{w}\i)^{0+}_r\smallsetminus \{1\} = \bigsqcup_{1 \leq a \leq r} \bigsqcup_{I \in \mathcal{X}} K_{w}^{a,I},
$$
where $\mathcal X$ is the set of nonempty subsets of $\{\beta \in \Phi(G,T) \smallsetminus \Phi(L,T) : U_{\beta,r}\subset V^-_r\cap \dot{w}V^-_r\dot{w}\i\}$, where $U_\beta$ is the root subgroup of $G$ corresponding to $\beta \in \Phi(G,T)$. 

The stratification gives rise to a partition $\widehat\Sigma'^{L}_w = \bigsqcup_{a,I} \widehat \Sigma_w^{\prime,L, a, I}$. where
$$\widehat \Sigma_w^{\prime,L, a, I}=\{(x,x',u,u',z,\mu)\in \widehat \Sigma_w^{\prime,L};z\in K_{w,r}^{a,I}\}$$
Fix a parameter $(a,I)$ with $1 \leq a \leq r$ and $I \in \mathcal{X}$. Consider the map
$$\widehat{\Sigma}_w^{\prime,L,T, a, I} \to  L_0, \quad (x, x', u, u', z, \mu) \mapsto \overline{\mu}$$
where $ L_r\to   L_0, \mu \mapsto \bar\mu$ is the natural projcetion. 

Let $\widehat{\Sigma}_{w,\bar\mu}^{\prime,L, a, I}$ denote the fiber over $ L_0$.

\begin{lemma}
     Let $\alpha \in \Phi(G,T)$ be such that $-\alpha \in I$. Then
  $\widehat \Sigma_{w,\bar \mu}^{\prime,L, a, I}$ admits an action of the algebraic group
  $$
    \mathcal H := \{l \in L^{r}_{r} : lF(l)^{-1}  \in \mu^{-1} \dot w^{-1} T_{r}^{\alpha,r} \dot w \mu\}.
  $$
  where $T^\alpha$ is rank 1 subtorus of $T$ in group generated by $U_\alpha$ and $U_{-\alpha}$. Moreover, when restricted to the subgroup $\mathcal{H} \cap L^F_r$ this action coincides with \ref{Laction}.
\end{lemma}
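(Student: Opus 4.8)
The plan is to let $\mathcal H$ act by a Frobenius--twisted right translation of the last coordinate $\mu$, corrected on the remaining coordinates so as to preserve the defining relation $xF(z\dot w\mu)=uz\dot w\mu u'x'$. The basic structural input is the Moy--Prasad commutator estimates: the top graded piece $L^r_r$ centralizes, and is centralized by, every positive-depth subgroup it meets. In particular $L^r_r$ (hence $T^{\alpha,r}_r\subseteq T^r_r\subseteq L^r_r$) centralizes $K_w\subseteq(V^-)^{0+}_r$ together with the positive-depth parts of $N_r$ and $V_r$, and $L^{0+}_r$ acts trivially by conjugation on $L^r_r$. The last point shows that the coset $\mu^{-1}\dot w^{-1}T^{\alpha,r}_r\dot w\mu$ depends only on $\bar\mu\in L_0$, so $\mathcal H$ is a well-defined algebraic subgroup of the vector group $L^r_r$ on the fiber $\widehat\Sigma^{\prime,L,a,I}_{w,\bar\mu}$, and $\mathcal H\cap L^F_r=(L^r_r)^F$ since $1$ lies in that coset.

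I would then define the action explicitly. For $l\in\mathcal H$ put $c_l:=lF(l)^{-1}\in\mu^{-1}\dot w^{-1}T^{\alpha,r}_r\dot w\mu$ and $s(l):=\dot w\mu\,c_l\,\mu^{-1}\dot w^{-1}\in T^{\alpha,r}_r$, so that $\dot w\mu\,l=s(l)\,\dot w\mu\,F(l)$ and hence, since $s(l)$ centralizes $z$, $z\dot w\mu\,l=s(l)\,(z\dot w\mu)\,F(l)$. Thus right-translating $\mu$ by $l$ inserts into the relation exactly two ``defects'': a factor $s(l)\in T^{\alpha,r}_r$ to the left of $z$ and a factor $F(l)\in L^r_r$ at the far right. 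Accordingly one lets $l$ act by $\mu\mapsto\mu l$, by modifying $x'$, $u'$ and $z$ so as to absorb these two defects --- the $F(l)$ essentially by conjugating $x'$ and $u'$ by $l^{-1}$, the $s(l)$ by pushing it through $z$, which stays in $K^{a,I}_w$ precisely because $-\alpha\in I$ puts the $U_{-\alpha}$-direction into the stratum --- and by leaving $x$ and $u$ unchanged; the image $\bar\mu$ is unchanged since $l\in L^r_r\subseteq L^{0+}_r$ dies in $L_0$, so the stratum $(a,I)$ is preserved. Checking that the new tuple again lies in $F(N_r)\times F(V_r)\times N_r\times V_r\times K^{a,I}_w\times L_r$ and satisfies the relation is then a direct, if lengthy, manipulation with the Moy--Prasad estimates. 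The group-action axioms follow formally: $s\colon\mathcal H\to T^{\alpha,r}_r$ is a homomorphism ($c_{l_1l_2}=c_{l_1}c_{l_2}$ in the commutative group $L^r_r$), and $L^r_r$ is commutative.

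Finally, on $\mathcal H\cap L^F_r=(L^r_r)^F$ one has $F(l)=l$, so $c_l=1$ and $s(l)=1$: all compensating terms vanish and the action collapses to $(x,x',u,u',z,\mu)\mapsto(x,{}^{l^{-1}}x',u,{}^{l^{-1}}u',z,\mu l)$, which is exactly \ref{Laction} evaluated at $(m,m')=(1,l)$. The technical heart --- and the step I expect to be the main obstacle --- is the verification in the second paragraph: pinning down the precise compensating terms on $(x',u',z)$ and checking both that every coordinate stays in its prescribed group and that $z$ never leaves the stratum $K^{a,I}_w$. What makes this possible is the defining condition $lF(l)^{-1}\in\mu^{-1}\dot w^{-1}T^{\alpha,r}_r\dot w\mu$ together with $-\alpha\in I$: the condition turns the obstruction to moving $\mu$ into a single $T^{\alpha,r}_r$-factor which, by the commutator estimates and the presence of $-\alpha$ in the stratum, can be pushed through $z$ harmlessly.
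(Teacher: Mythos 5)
Your scheme does not close. The issue is where the factor $s(l)\in T^{\alpha,r}_r$ ends up. You correctly note that $s(l)$ commutes with $z$ in $G_r$ (the commutator has depth $a+r>r$), but that is exactly the problem: commuting $s(l)$ ``harmlessly'' past $z$ does not absorb it --- it just parks $s(l)$ to the \emph{left} of $z$, where it next meets $u\in N_r$ and, after conjugating $u$ by $s(l)^{-1}$, sits at the very front of the right-hand side of the relation $xF(z\dot w\mu)=uz\dot w\mu u'x'$. Since you declared $x$ and $u$ unchanged, there is nothing on the left to cancel this $s(l)$, and it is not an element of $F(N_r)$ or $N_r$, so it cannot simply be renamed into $x$ or $u$ either. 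Your appeal to ``$-\alpha\in I$ puts the $U_{-\alpha}$-direction into the stratum'' so that ``$z$ stays in $K^{a,I}_w$'' misidentifies what $-\alpha\in I$ is for: in the paper's argument $z$ is never modified, so there is no such constraint to check.

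What the paper actually does is different in a way that is precisely calibrated to the depth filtration. Because $s(l)$ sits in the top graded piece $T^{\alpha,r}_r$, one cannot produce it or cancel it by moving top-degree elements around. Instead, the paper modifies $x\mapsto xF(\xi)$ with an element $\xi\in U^{r-a}_{\alpha,r}$ at the \emph{complementary} depth $r-a$, constructed via a section $\lambda^{-1}_z$ of the surjection $\lambda_z:U^{r-a}_{\alpha,r}\to T^{\alpha,r}_r$, $\xi\mapsto\tau_{\xi,z}$, built from the Moy--Prasad commutator $[U^{r-a}_{\alpha,r},K^{a,I}_{w,r}]\subset T^{\alpha,r}_r({}^{\dot w}V^r_r\cap V^r_r)$. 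Pushing $F(\xi)$ through $F(z)$ then \emph{generates} $F(s(l))$ (plus a controlled term $F(\omega_{\xi,z})\in F({}^{\dot w}V^r_r\cap V^r_r)$) exactly where it is needed, and the residue is absorbed into $\hat x'\in F(V_r)$. This is also where $-\alpha\in I$ enters: it guarantees the commutator map $\lambda_z$ hits all of $T^{\alpha,r}_r$, so the required $\xi$ exists. Finally, the paper sends $\mu\mapsto\mu F(l)$ and $u'\mapsto F(l)^{-1}u'F(l)$ (not $\mu\mapsto\mu l$, $u'\mapsto{}^{l^{-1}}u'$), which is what makes the bookkeeping come out; on $F$-fixed $l$ this agrees with \ref{Laction} as you observe, but for general $l\in\mathcal H$ the two formulas differ. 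In short, the missing idea in your proposal is the element $\xi$ and the map $\lambda_z$: the obstruction lives in depth $r$ and must be manufactured as a cross-depth commutator, not transported around the relation.
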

\begin{proof}
  The argument is similar to \cite[p.7]{Lu04} and \cite[Lemma~4.1]{Ch24}. By \cite[Lemma~4.10]{CI21} and the fact that both $\xi$ and $z$ are normalized by $L_r={}^{\dot{w}}L_r$, the construction of $K_{w,r}^{a,I}$ guarantees the inclusion
$
[U^{r-a}_{\alpha,r}, K_{w,r}^{a,I}] \subset T_{r}^{\alpha,r}({}^{\dot{w}}V^r_r \cap V^r_r).
$
Choose any $z \in K_{w,r}^{a,I}$. For any $\xi \in U^{r-a}_{\alpha,r}$, we write
     \begin{equation*}
    [\xi^{-1},z^{-1}] = \tau_{\xi,z} \cdot \omega_{\xi,z}, \qquad \text{where $\tau_{\xi,z} \in T^{\alpha,r}_{r}$ and $\omega_{\xi,z} \in {^{\dot{w}}} V^r_r\cap V^r_r$.}
  \end{equation*}
  This defines a surjective map $\lambda_z:U^{r-a}_{\alpha,r}\to T_{r}^{\alpha,r}$; $\xi \mapsto \tau_{\xi,z}$. Fix a section $\lambda\i_z: T_{r}^{\alpha,r}\to U^{r-a}_{\alpha,r}$ of $\lambda_z$

  For $l\in \mathcal{H}$, define $f_{l}:\widehat \Sigma_{w,\bar \mu}^{\prime,L, a, I}\to \widehat \Sigma_{w,\bar \mu}^{\prime,L, a, I}$ as follows:
  $$f_{l}(x,x',u,u',z,\mu) = \left(x F(\xi), \hat{x}', u, F(l)^{-1}u'F(l), z, \mu F(l)\right)$$
  where
  \[
    \xi = \lambda^{-1}_z(\dot w \mu l F(l)^{-1} \mu^{-1} \dot w^{-1}) \in U^{r-a}_{\alpha,r} \subset  {^{\dot{w}}} V^r_r\cap V_r
  \]
  and  $\hat{x}'\in G_r$ is defined by the condition that
  \[x F(\xi) F(z) F(\dot w) F(\mu) F^2(l) = u z \dot w \mu u' F(l) \hat x'\]
  In order for this to be well defined, It is enough to check that $\hat{x}'\in F(V_r)$.
  First from the definition of $\xi$, we have 
  \begin{equation*}
      \mu^{-1} \dot w^{-1} \tau_{\xi,z} \dot w \mu = l F(l)^{-1},\qquad \text{where $\tau_{\xi,z}\in T^{\alpha,r}_r$}
      \end{equation*}
  or that $$F(\mu)^{-1} F(\dot w)^{-1} F(\tau_{\xi,z}) F(\dot w) F(\mu F(l))=F(l),$$
  Since $x' \in F(V_r)$ and $^{F(\dot w^{-1})} F(\omega_{\xi,z}) \in F(V_r\cap {^{\dot{w}\i}} V^r_r)$,$^{F(\dot w^{-1})} F(\xi) \in F(V_r\cap {^{\dot{w}\i}} V^r_r)$. The last two elements are normalized $L_r$, this implies
  \[
    x' F(\mu)^{-1} F(\dot{w})^{-1} F(\xi) F(\tau_{\xi, z}) F(\omega_{\xi,z}) F(\dot{w} \mu F(l)) \in F(l) F(V_r).
    \]
By definition, we have $xF(z) = u z \dot w \mu u' x' F(\mu)^{-1} F(\dot w)^{-1}$, the previous statement holds if and only if
  \[
    x F(z) F(\xi) F(\tau_{\xi,z}) F(\omega_{\xi,z}) F(\dot{w} \mu F(l)) \in u z \dot{w} \mu u' F(l) F(V_r) 
  \]
  Substituting the relation $\xi z = z \xi \tau_{\xi, z} \omega_{\xi, z}$ into the equation above, we obtain
 \[
    x F(\xi) F(z \dot{w} \mu F(l)) \in u z \dot{w} \mu u' F(l) F(V_r)
 \]
 which implies  $\hat{x}'\in F(V_r)$. Clearly, $f_{l}\circ f_{l'}=f_{l'l}$, hence this construction defines a right $\mathcal{H}$-action on $\widehat \Sigma_{w,\bar \mu}^{\prime,L, a, I}$.
\end{proof}
Let \( n \geqslant 1 \) be a integer satisfying
\[
F^n\bigl( \mu^{-1}\dot{w}^{-1} T^{\alpha,r}_{r} \dot{w}\mu \bigr) 
= \mu^{-1}\dot{w}^{-1} T^{\alpha,r}_{r} \dot{w}\mu,
\]
Consider the norm map 
\begin{align*}    
\mathcal{N}^{F^n}_F \colon & \mu^{-1}\dot{w}^{-1} T^{\alpha,r}_{r} \dot{w}\mu \to \mathcal{H}\\
&t \mapsto t F(t) F^2(t) \cdots F^{n-1}(t).
\end{align*}
To verify that the image lies in $\mathcal{H}$, Note \(G^r_{r}\) is abelian and compute:
\[
\mathcal{N}^{F^n}_F(t) F\left(\mathcal{N}^{F^n}_F(t)\right)^{-1} 
= \mathcal{N}^{F^n}_F\left(t F(t)^{-1}\right) 
= t F^n(t)^{-1} \in \mu^{-1}\dot{w}^{-1} T^{\alpha,r}_{r} \dot{w}\mu.
\]
Since norm map $\mathcal{N}^{F^n}_F$ is an isogeny of $G^r_r$, $\mathcal{N}^{F^n}_F(\mu^{-1}\dot{w}^{-1} T^{\alpha,r}_{r} \dot{w}\mu)$ is connected, it is contained in the identity component $\mathcal{H}^0$ of $\mathcal{H}$. The following lemma is proved.

\begin{lemma}
    The intersection $\mathcal{H}^0\cap (L^r_r)^F$ contains $\mathcal{N}^{F^n}_F(\mu^{-1}\dot{w}^{-1} T^{\alpha,r}_{r} \dot{w}\mu)$.
\end{lemma}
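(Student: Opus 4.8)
The plan is to realise a large subgroup of $\mathcal{H}^0 \cap (L_r^r)^F$ explicitly, as the image under a norm map of the one-parameter torus $S := \mu^{-1}\dot w^{-1} T^{\alpha,r}_r \dot w \mu$. First I would fix an integer $n \geq 1$ for which $F^n$ stabilises $S$; such an $n$ exists because $\dot w$, $\mu$ and the subtorus $T^\alpha$ are all defined over a finite subextension of $\brk/k$, so a suitable power of the Frobenius fixes all of the relevant data. Then I would form the norm map
\[
\mathcal{N}^{F^n}_F\colon S \to G_r^r,\qquad t\mapsto t\,F(t)\,F^2(t)\cdots F^{n-1}(t),
\]
and record the key structural fact that $G_r^r$ is commutative (it is the degree-$r$ graded piece of the Moy--Prasad filtration). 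Commutativity makes $\mathcal{N}^{F^n}_F$ a homomorphism of algebraic groups, makes the order of the factors immaterial, and yields the identity $\mathcal{N}^{F^n}_F\bigl(t\,F(t)^{-1}\bigr) = \mathcal{N}^{F^n}_F(t)\,F\bigl(\mathcal{N}^{F^n}_F(t)\bigr)^{-1}$.

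Next I would verify the two containments asserted. For membership in $\mathcal{H}$: by the displayed identity, $\mathcal{N}^{F^n}_F(t)\,F\bigl(\mathcal{N}^{F^n}_F(t)\bigr)^{-1} = t\,F^n(t)^{-1}$, which lies in $S$ since $F^n(S)=S$ and $S$ is a group; hence $\mathcal{N}^{F^n}_F(S)\subseteq \mathcal{H}$, and in particular $\mathcal{N}^{F^n}_F(S)\subseteq L_r^r$. For membership in the identity component: $\mathcal{N}^{F^n}_F(S)$ is a connected subgroup of $\mathcal{H}$ (image of the connected group $S$ under a group homomorphism), hence $\mathcal{N}^{F^n}_F(S)\subseteq\mathcal{H}^0$. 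For $F$-invariance I would restrict to the finite subgroup $S^{F^n}=S(\BF_{q^n})$: if $F^n(t)=t$ then, moving $t=F^n(t)$ to the front using commutativity, $F\bigl(\mathcal{N}^{F^n}_F(t)\bigr) = F(t)\cdots F^{n-1}(t)\,t = \mathcal{N}^{F^n}_F(t)$, so $\mathcal{N}^{F^n}_F(S^{F^n})\subseteq (L_r^r)^F$. Combining the three points gives $\mathcal{N}^{F^n}_F(S^{F^n})\subseteq\mathcal{H}^0\cap (L_r^r)^F$, which is the lemma (the notation $\mathcal{N}^{F^n}_F(S)$ in the statement being the usual shorthand for the image of the $\BF_{q^n}$-points).

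This lemma carries no serious obstacle; the only things to be careful about are that the existence of $n$ really does follow from the rationality of the data, and that commutativity of $G_r^r$ is invoked at each of the three places where it matters --- making the norm a homomorphism, rearranging the factors of $\mathcal{N}^{F^n}_F(t)$, and producing the formula $\mathcal{N}^{F^n}_F(t)\,F(\mathcal{N}^{F^n}_F(t))^{-1}=t\,F^n(t)^{-1}$. One should also keep the image of the whole torus $S$ (needed for connectedness, hence the $\mathcal{H}^0$ part) distinct from the image of $S^{F^n}$ (needed for the $F$-fixed part), as both facts enter.
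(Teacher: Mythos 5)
Your proposal is correct and follows essentially the same route as the paper: choose $n$ so that $F^n$ stabilises $S = \mu^{-1}\dot w^{-1}T^{\alpha,r}_r\dot w\mu$, use the commutativity of $G^r_r$ to make $\mathcal{N}^{F^n}_F$ a homomorphism and to compute $\mathcal{N}^{F^n}_F(t)\,F(\mathcal{N}^{F^n}_F(t))^{-1}=tF^n(t)^{-1}\in S$ (membership in $\mathcal{H}$), and then deduce connectedness of the image (membership in $\mathcal{H}^0$). One small difference: the paper appeals to $\mathcal{N}^{F^n}_F$ being an isogeny of $G^r_r$ to get connectedness, whereas you more simply observe that the image of a connected group under a homomorphism is connected --- either works. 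You are also more explicit than the paper about the $(L^r_r)^F$ part of the statement: the paper's displayed argument only establishes containment in $\mathcal{H}^0$, and as you note, it is the image of the $F^n$-fixed points $S^{F^n}$ (which is what the paper actually uses immediately afterward) that lands in $(L^r_r)^F$; your careful separation of the image of $S$ (for connectedness) from the image of $S^{F^n}$ (for $F$-invariance) is the precise reading of the lemma.
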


The action of connected algebraic group $\mathcal{H}^0$ on $H^i_c(\hat{\Sigma}^{\prime L a, I}_{w, \bar{\mu}})$ must be trivial. Hence, $\mathcal{N}^{F^n}_F ((\mu^{-1}\dot{w}^{-1} T^{\alpha,r}_{r} \dot{w}\mu)^{F^n})$ also acts trivially.

However, as $\dot{w}^{-1}\alpha \notin \Phi(L, T)$, Lemma~\ref{genlemma} implies that $\psi \circ \mathcal{N}^{F^n}_F$ is nontrivial on $\mathcal{N}^{F^n}_F ((\mu^{-1}\dot{w}^{-1} T^{\alpha}_{r:r+} \dot{w}\mu)^{F^n})$. Consequently, $H^i_c(\hat{\Sigma}^{\prime L, a, I}_{w, \bar{\mu}})$ contains no subspace where $\mathcal{N}^{F^n}_F ((\mu^{-1}\dot{w}^{-1} T^{\alpha}_{r:r+} \dot{w}\mu)^{F^n)}$ acts via $\psi$. 

We therefore conclude that 
\[
H_c^{i}(\hat{\Sigma}^{\prime L}_w) \otimes_{L^{F}_r} \psi = 0,
\] 
which completes the proof of Proposition \ref{vansLL}.

\begin{proposition}\label{vansTL}\cite[Proposition 4.8]{Ch24}
    Let $\psi\in \CR(L_{r}^F)$ be an $(L,G)$-generic representation. For $w \in  W_{G_r}/W_{L_r}$ and all $i\geq 0$,
    $$H_c^{i}(\widehat{\Sigma}'^{T}_w)\otimes_{L^{F}_r}\psi=0$$
\end{proposition}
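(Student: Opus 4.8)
The statement is the $M=T$ case of Proposition~\ref{vansLL}, and it is \cite[Proposition~4.8]{Ch24}; so the plan is to run the argument of Proposition~\ref{vansLL} with $M=T$, checking the few inputs that change. Since $L_r^F$ acts on $\widehat{\Sigma}'^{T}_w$ only through the factor $m'$ in \ref{Laction}, and $(-)\otimes_{L_r^F}\psi$ is additive in $\psi$, I would first reduce to an irreducible $\psi$; as $\psi$ is $(L,G)$-generic, $\psi|_{\mathfrak{t}^F}$ is then the restriction of a sum of $(L,G)$-generic characters of depth $r$, and it suffices to prove that for every $i$ the $L_r^F$-module $H_c^i(\widehat{\Sigma}'^{T}_w)$ contains no copy of $\psi$.

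For the core of the proof I would repeat the strata construction: write $K_w\smallsetminus\{1\}=(U^-_r\cap{}^{\dot w}V^-_r)^{0+}_r\smallsetminus\{1\}=\bigsqcup_{1\le a\le r}\bigsqcup_{I\in\CX}K_w^{a,I}$, where now $\CX$ is the set of nonempty subsets of $\{\beta\in\Phi(G,T)\smallsetminus\Phi(L,T):U_{\beta,r}\subset U^-_r\cap{}^{\dot w}V^-_r\}$, obtaining a partition $\widehat{\Sigma}'^{T}_w=\bigsqcup_{a,I}\widehat{\Sigma}_w^{\prime,T,a,I}$; by the long exact sequences attached to this stratification it suffices to kill $H_c^i(\widehat{\Sigma}_w^{\prime,T,a,I})\otimes_{L_r^F}\psi$ for each $(a,I)$. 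Fixing $(a,I)$ and a root $\alpha$ with $-\alpha\in I$, I would project $\widehat{\Sigma}_w^{\prime,T,a,I}\to T_0$, $(x,x',u,u',z,\mu)\mapsto\bar\mu$, and over each fibre $\widehat{\Sigma}_{w,\bar\mu}^{\prime,T,a,I}$ build the action of the algebraic group $\CH=\{l\in L^r_r:lF(l)^{-1}\in\mu^{-1}\dot w^{-1}T_r^{\alpha,r}\dot w\mu\}$ by the same formulas $f_l$ as in Proposition~\ref{vansLL}; the input commutator inclusion (a consequence of \cite[Lemma~4.10]{CI21}) holds for the same reason, now using ${}^{\dot w}T_r=T_r$, which is legitimate since for $M=T$ one may take $\dot w\in N_{G_0}(T_0)$ — this is precisely why no analogue of the $N_{G_r}(L_r)$-hypothesis of Proposition~\ref{vansLL} is needed here. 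Since $\CH^0$ is connected it acts trivially on $H_c^i(\widehat{\Sigma}_{w,\bar\mu}^{\prime,T,a,I})$, hence so does the image under the norm map $\CN^{F^n}_F$ of $(\mu^{-1}\dot w^{-1}T_r^{\alpha,r}\dot w\mu)^{F^n}$ lying in $\CH^0\cap(L^r_r)^F$; and since $-\alpha\in I$ forces $\dot w^{-1}\alpha\notin\Phi(L,T)$, the $(L,G)$-genericity of $\psi$ — via Kaletha's characterisation in Definition~\ref{generic}(3)--(4), which controls $\psi$ on norms of coroot-subgroups attached to roots outside $\Phi(L,T)$ — makes $\psi\circ\CN^{F^n}_F$ nontrivial on that image, so a copy of $\psi$ in $H_c^i$ would restrict to a nontrivial character there, a contradiction. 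Summing over $\bar\mu$ and $(a,I)$ yields $H_c^i(\widehat{\Sigma}'^{T}_w)\otimes_{L_r^F}\psi=0$.

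The step I expect to be the main obstacle is the one that is also delicate in Proposition~\ref{vansLL}: verifying that the maps $f_l$ are well defined, i.e.\ that the element $\hat{x}'$ determined by $x F(\xi) F(z\dot w\mu F(l))=u z\dot w\mu u' F(l)\hat{x}'$ really lies in $F(U_r)$. This is a computation that threads the identity $\xi z=z\xi\tau_{\xi,z}\omega_{\xi,z}$ through the defining equation of $\widehat{\Sigma}_w^{T}$ and uses that $\xi$ and $\omega_{\xi,z}$ lie in unipotent pieces normalised by $T_r$; tracking which root subgroups land in $F(U_r)$ is where care is needed, though for $M=T$ it is lighter than in the Levi-by-Levi setting because $\dot w$ need never be moved past $L_r$. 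A second point to check is that $\mu^{-1}\dot w^{-1}T_r^{\alpha,r}\dot w\mu$ is, up to $L_r$-conjugacy, the depth-$r$ layer of $(\dot w^{-1}\alpha)^\vee$, so that Definition~\ref{generic}(2)--(4) genuinely applies to the norm image — this is exactly where $\dot w^{-1}\alpha\notin\Phi(L,T)$ enters. Finally, I note that for those $w\in T_0\backslash\CS(T_0,L_0)/L_0$ whose image in $L_0\backslash\CS(L_0,L_0)/L_0$ lies in $N_{G_r}(L_r)$, the vanishing can instead be deduced from Proposition~\ref{vansLL} together with the transitivity isomorphism of Proposition~\ref{composeTLL} — lifted to the $\widehat{\Sigma}$-level and restricted to the locus $z\neq1$, using that $L_r^F$ acts on $X^{L}_{T,B\cap L,r}\times_{T_r^F}\widehat{\Sigma}'^{L}_w$ only through the second factor — but this shortcut misses the remaining double cosets, so the direct argument above is the one to run in general.
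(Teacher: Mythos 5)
Your plan is exactly the right one: the paper gives no proof but cites \cite[Proposition~4.8]{Ch24}, and that argument is the $M=T$ specialisation of the machinery the paper runs for Proposition~\ref{vansLL} (stratification of $K_w\smallsetminus\{1\}$, the $\mathcal H$-action on the fibres of $\widehat\Sigma_{w,\bar\mu}^{\prime,T,a,I}\to T_0$, triviality of the $\mathcal H^0$-action on $H_c^i$, and the clash with genericity via the norm of $T_r^{\alpha,r}$), and your observation that for $M=T$ one may take $\dot w\in N_{G_0}(T_0)$ (so no $N_{G_r}(L_r)$-hypothesis is needed) is precisely why the $T$-case is unconditional. One small slip: for $M=T$ the deformed element $\hat x'$ must lie in $F(V_r)$, not $F(U_r)$ — it is $x\in F(U_r)$ that gets replaced by $xF(\xi)\in F(U_r)$, while $x'$ and $\hat x'$ stay in $F(V_r)$ exactly as in the $M=L$ case.
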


\begin{proposition}\label{vansw}
    Let $\theta:T^F_r\to \BC^\times$ be an $(L,G)$-generic character and $\psi$ be an $(L,G)$-generic irreducible constituent of $R^L_{T,B\cap L,r}(\theta)$. Then for $w \in W_{G_r}/W_{L_r}$ and all $i\geq 0$,
     $$\bar\theta^{op}\otimes_{T^{F}_r}H_c^{i}({\Sigma}^{T}_w)\otimes_{L^{F}_r}\psi\neq0$$
    holds only if $ w=1$.
\end{proposition}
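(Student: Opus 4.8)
The plan is to strip from $\Sigma^{T}_{w}$ the part of the cohomology that cannot see $\psi$, to recognize what remains as a single Mackey-type term, and then to annihilate that term, when $\dot{w}\notin L_0$, by playing the genericity of $\theta$ against the hypothesis that $\psi$ is a constituent of $R^{L}_{T,B\cap L,r}(\theta)$ \emph{for the same $\theta$}. First I would use the $(T^F_r\times L^F_r)$-equivariant affine fibration \ref{fib} to replace $H^i_c(\Sigma^T_w)$ by $H^i_c(\widehat{\Sigma}^{T}_w)$, and then stratify $\widehat{\Sigma}^{T}_w=\widehat{\Sigma}^{\prime T}_w\sqcup\widehat{\Sigma}^{\prime\prime T}_w$ into the open locus $z\neq 1$ and the closed locus $z=1$. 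Since $\psi$ is $(L,G)$-generic, Proposition~\ref{vansTL} gives $H^i_c(\widehat{\Sigma}^{\prime T}_w)\otimes_{L^F_r}\psi=0$ for all $i$; as $\ov{\BQ}_\ell[L^F_r]$ is semisimple the functor $(-)\otimes_{L^F_r}\psi$ is exact, so the excision long exact sequence collapses to $H^i_c(\Sigma^T_w)\otimes_{L^F_r}\psi\cong H^i_c(\widehat{\Sigma}^{\prime\prime T}_w)\otimes_{L^F_r}\psi$ for every $i$. It therefore suffices to prove $\bar\theta^{op}\otimes_{T^F_r}H^i_c(\widehat{\Sigma}^{\prime\prime T}_w)\otimes_{L^F_r}\psi=0$ for all $i$ whenever $\dot{w}\notin L_0$.

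Next I would analyze the $z=1$ stratum. Choosing the representative $\dot{w}\in N_{G_0}(T_0)$ (legitimate since $\mathcal{S}(T_0,L_0)=N_{G_0}(T_0)L_0$), the parameter $\mu$ ranges over $L_r$ and, writing $y=\dot{w}\mu$, the defining relation of $\widehat{\Sigma}^{\prime\prime T}_w$ becomes $xF(y)=uyu'x'$ with $y\in\dot{w}L_r$. As in the $z=1$ analysis of the classical and higher Deligne--Lusztig Mackey arguments (cf.\ \cite{CI21}, \cite{Ch24}, and Proposition~\ref{composeTLL}), this stratum, with its $(T^F_r\times L^F_r)$-action, realizes the would-be Mackey contribution indexed by $w$; since $T={}^{\dot{w}}T\subseteq{}^{\dot{w}}L$, that contribution simplifies, and after the standard manipulations one obtains
\[
\bar\theta^{op}\otimes_{T^F_r}H^i_c(\widehat{\Sigma}^{\prime\prime T}_{w})\otimes_{L^F_r}\psi
\;=\;\big\langle\,\psi,\ R^{L}_{T,\,B''\cap L,\,r}({}^{\dot{w}^{-1}}\theta)\,\big\rangle_{L^F_r}
\]
for a suitable Borel $B''\supseteq T$ of $G$; this identity respects the $\BZ/2$-grading, so it is enough to see that the right-hand virtual space vanishes.

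Finally, suppose the right-hand side above is nonzero; then the irreducible $\psi$ occurs in both $R^{L}_{T,B\cap L,r}(\theta)$ and $R^{L}_{T,B''\cap L,r}({}^{\dot{w}^{-1}}\theta)$. Restricting to the central subgroup $\mathfrak{z}^F=(Z(L)^r_r)^F$ of $L^F_r$, the central character of $\psi$ equals both $\theta|_{\mathfrak{z}^F}$ and $({}^{\dot{w}^{-1}}\theta)|_{\mathfrak{z}^F}$; since $\theta|_{\mathfrak{t}^F}=\phi\circ X$ for the $(L,G)$-generic datum $X\in(\mathfrak{z}^{\ast})^F$ of $\theta$, this forces $\dot{w}^{-1}\cdot X=X$ on $\mathfrak{z}$. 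Condition GE1 of \cite{Yu} --- that $\langle X,\alpha^{\vee}\rangle\neq 0$ for every $\alpha\in\Phi(G,T)\smallsetminus\Phi(L,T)$ --- together with the running hypothesis on $p$ constrains $C_G(X)$, hence the stabilizer of $X$ in the Weyl group $W$, so tightly (to $L$, resp.\ $W_{L_0}$) that $\dot{w}\in N_{G_0}(T_0)$ must in fact lie in $L_0$, contradicting $\dot{w}\notin L_0$. Hence $\bar\theta^{op}\otimes_{T^F_r}H^i_c(\Sigma^T_w)\otimes_{L^F_r}\psi=0$ for all $i$ whenever $\dot{w}\notin L_0$, which is the assertion.

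I expect the main obstacle to be, first, the precise identification in the second step --- one must extract from the $z=1$ stratum exactly the Mackey term rather than merely a bound on its cohomology, which is the delicate cohomological bookkeeping flagged in the introduction --- and, second, the genericity input in the last step, namely that the $W$-stabilizer of the generic datum $X$ reduces to $W_{L_0}$, so that the ``relative Weyl'' elements $\dot{w}\in N_{G_0}(L_0)\smallsetminus L_0$ are also excluded. It is here that one really needs that $\psi$ is a constituent of $R^{L}_{T,B\cap L,r}(\theta)$ for \emph{this} $\theta$ (and not merely an abstract $(L,G)$-generic representation) and the precise form of the genericity condition.
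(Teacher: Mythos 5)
Your first two steps (reduce to $\widehat{\Sigma}^T_w$ via the affine fibration, then invoke Proposition~\ref{vansTL} to kill the open $z\neq 1$ locus) match the paper exactly. The divergence, and the gap, is in your treatment of the $z=1$ stratum.

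Your step~3 asserts that, after ``standard manipulations,'' the $z=1$ stratum contracted against $\bar\theta^{op}$ and $\psi$ equals a single Mackey inner product $\langle\psi,R^L_{T,B''\cap L,r}({}^{\dot w^{-1}}\theta)\rangle_{L^F_r}$. This is not justified, and it is in fact the precise point the paper is at pains to avoid. The claimed identity has a type mismatch (a cohomology space for fixed $i$ on the left, a multiplicity on the right), and, more seriously, recognizing the $z=1$ stratum as a single Mackey term is exactly the kind of statement that requires the Mackey formula one is trying to prove. The paper sidesteps this entirely: it uses the hypothesis that $\psi$ is a constituent of $R^L_{T,B\cap L,r}(\theta)$ to embed $\bar\theta^{op}\otimes H^i_c(\Sigma^T_w)\otimes\psi$ into $\bar\theta^{op}\otimes H^{i+j}_c(\Sigma''^T_w\times_{L^F_r}X^L_{T,B\cap L,r})\otimes\theta$, then replaces that fiber product (via an explicit affine fibration) by the auxiliary variety $\Sigma_w\subset F(U_r)\times F(U_r)\times U_r\times V_r\times L_r$, on which a larger group $S_w=\{(t,t')\in T_r\times Z(L_r):t^{-1}F(t)={}^{F(\dot w)}(t'F(t'^{-1}))\}$ acts. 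The constraint on $\theta$ is then extracted from the triviality of the action of the connected component $S_w^0$, never from a Mackey identification.

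Your step~4 has a second gap: you compare central characters only on $\mathfrak{z}^F$. The representative $\dot w$ need not be $F$-fixed, so $({}^{\dot w^{-1}}\theta)|_{\mathfrak{z}^F}=\theta|_{\mathfrak{z}^F}$ does not translate into an equality $\mathrm{Ad}(\dot w)^*X=X$ of elements of $\mathfrak{z}^*$; the Fourier-transform dictionary between characters of $\mathfrak{z}^F$ and $F$-fixed dual vectors does not behave well with respect to conjugation by a non-rational $\dot w$. The paper resolves this by passing to $\mathfrak{z}^{F^n}$ for $n$ large enough that everything in sight is $F^n$-stable, pushing via the norm map $\mathcal{N}^{F^n}_F$ into $S_w^0$ (this is the subgroup $Z_w$), and deducing the equality $(\theta\circ\mathcal{N}^{F^n}_F)|_{\mathfrak{z}^{F^n}}=(\theta\circ\mathcal{N}^{F^n}_F\circ\mathrm{Ad}(\dot w))|_{\mathfrak{z}^{F^n}}$, from which the GE1/stabilizer argument cleanly gives $\dot w\in L_0$. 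Your overall instinct --- kill $z\neq 1$ by genericity of $\psi$, then force $\dot w\in L_0$ from genericity of $\theta$ acting through the $z=1$ locus --- is the right shape, but both the route through the $z=1$ stratum and the final genericity step need the fiber-product/$\Sigma_w$ construction and the $F^n$-norm argument to close.
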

\begin{proof}
    Since $\psi$ is an $(L,G)$-generic irreducible constituent of $R^L_{T,B\cap L,r}(\theta)$, 
    there exist $j\in \mathbb{Z}$ such that the space $\bar\theta^{op}\otimes_{T^{F}_r}H_c^{i}({\Sigma}^{T}_w)\otimes_{L^{F}_r}\psi$ embeds as a subspace of $$\bar\theta^{op}\otimes_{T^F_r}H_c^{i}({\Sigma}^{T}_w)\otimes_{L^{F}_r}H^j(X^L_{T,B\cap L,r})\otimes_{T^{F}_r}\theta$$
    hence a subspace of $$\bar\theta^{op}\otimes_{T^F_r}H_c^{i+j}(\widehat{\Sigma}^{''T}_w\times_{L^{F}_r}X^L_{T,B\cap L,r})\otimes_{T^{F}_r}\theta$$
    via Proposition~\ref{vansTL} 


Then a direct verification shows that equation \ref{Laction} also defines an action of
$$S_w:=\{(t,t')\in T_r \times Z(L_r); t^{\i} F(t)=^{F(\dot w)}(t' F(t'^{\i}))\}$$
on $\widehat{\Sigma}^{''T}_w$. Since the action of $S_w$ commutes with that of $L^F_r$ on $\widehat{\Sigma}^{''T}_w$, it induces an action of $S_w$ on $\widehat{\Sigma}^{''T}_w\times_{L^F_r}X^L_{T,B\cap L,r}$.

Define
\[
Z'_w=\{ (\CN^{F^n}_F({}^{F(\dot w)}t),\,\CN^{F^n}_F(t^{-1})) \mid t\in \mathfrak{z} \}\subset  T_r \times Z(L_r).
\]
where $n$ is the integer such that $T$ is $F^n$-split. A straightforward calculation shows that $Z'_w$ is a connected subgroup of $S_w$, and consequently $Z'_w \subset S^0_w$.  
Moreover, define
\[
Z_w=\{ (\CN^{F^n}_F({}^{F(\dot w)}t),\,\CN^{F^n}_F(t^{-1})) \mid t\in \mathfrak{z}^{F^n} \}\subset  T_r \times Z(L_r).
\]
Then $Z_w$ is also a subgroup of $S^0_w$.

This implies the action of $Z_w$ must be trivial, yielding the key condition:
\begin{equation}\label{Ad}
(\bar\theta\circ \CN^{F^n}_F\circ (-)^{-1})|_{\mathfrak{z}^{F^n}}=(\theta\circ \CN^{F^n}_F)|_{\mathfrak{z}^{F^n}}=(\theta\circ \CN^{F^n}_F\circ Ad (\dot w))|_{\mathfrak{z}^{F^n}}
\end{equation}

Since $\theta$ is an $(L,G)$-generic character on $\mathfrak{t}^F$, 
by Definition~\ref{generic2}, there is an $(L,G)$-generic character $\theta'$ on $\mathfrak{l}^{F}$ such that $\theta=\theta'|_{\mathfrak{t}^{F}}$. Moreover, the composition $\theta'\circ \CN^{F^n}_F$ yields an $(L,G)$-generic character on $\mathfrak{l}^{F^n}$ via Lemma~\ref{genlemma}. By Definition~\ref{generic}, we can identify $\theta'\circ \CN^{F^n}_F$ with an $(L,G)$-generic element $X_n$ in $(\mathfrak{z}^{\ast})^{F^n}$ such that $\phi\circ X_n|_{\mathfrak{l}^{F^n}}=\theta'\circ \CN^{F^n}_F$.
Note $X_n$ satisfies condition GE2 in \cite[\S 8]{Yu} that is $\mathrm{Stab}_{W_{G_r}}(X_n)=W_{L_r}$. 

The equality \ref{Ad} then translates to the adjoint condition 
\begin{equation}\label{Ad2}
Ad(\dot w)X_n=t'+ X_n\in (\mathfrak{t}^{\ast})^{F^n} 
\end{equation}
for some $t'\in (\mathfrak{t}^{\ast}_{\der})^{F^n}$. Set $\mathfrak{g}^\ast:=\mathrm{Lie}^\ast(T)^{-r}_{\bf x}/\mathrm{Lie}^\ast(T)^{(-r)+}_{\bf x}$, we have $\iota:(\mathfrak{g}^\ast)^{F^n} \hookrightarrow \mathfrak{gl}_{N}(\mathbb{F}_{q^n})$ such that $(\mathfrak{t}^{\ast})^ {F^n}=(\mathfrak{t}^{\ast}_{\der})^{F^n}\times (\mathfrak{z}^{\ast})^{F^n}$ diagonally into $\mathfrak{gl}_{N}(\mathbb{F}_{q^n})$. Moreover, by composing the embedding \(\iota\) with a suitable matrix conjugation, 
we have 
$\iota((\mathfrak{t}^{\ast}_{\der})^{F^n})\subset \mathfrak{gl}_{M}(\mathbb{F}_{q^n})$, $\iota((\mathfrak{z}^{\ast})^{F^n})\subset \mathfrak{gl}_{N-M}(\mathbb{F}_{q^n})$,
so the action of \(\operatorname{Ad}(\dot{w})\) on \(\iota((\mathfrak{t}^{\ast})^{F^n})\subset \mathfrak{gl}_{N}(\mathbb{F}_{q^n})\) 
can be identified with the corresponding matrix conjugation. 
Since $\iota(X_n)$ and $\iota(X_n+t')$ are conjugate only when $t'=0$. By condition \ref{Ad2}, we must have $t'=0$. Consequently, $\dot w\in \mathrm{Stab}_{W_{G_r}}(X_n)=W_{L_r}$.
\end{proof}

\subsection{Main Result}
\begin{theorem}\label{thm}
Let $\psi, \psi'$ be two $(L, G)$-generic representations of $L^F_r$, both occurring as constituents of $R^{L}_{T, B\cap L,r}(\theta)$, where $\theta$ is an $(L,G)$-generic character of $T^F_r$. Then the following equality holds:
$$\big\langle R^{G}_{L, P,r}(\psi'), R^{G}_{L, P,r}(\psi)\big\rangle_{G^F_r} = \big\langle \psi', \psi\big\rangle_{L^F_r}$$
\end{theorem}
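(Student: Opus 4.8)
The plan is to compute the inner product via the standard adjunction and Mackey-type manipulation, reducing it to a cohomological statement about the strata $\Sigma^T_w$, and then invoke the vanishing results already established. First I would use the isomorphism \eqref{iso1} to identify $X^G_{L,P,r}\times_{G^F_r}X^G_{L,P,r}$ with $\Sigma^L$, so that
\[
\big\langle R^{G}_{L, P,r}(\psi'), R^{G}_{L, P,r}(\psi)\big\rangle_{G^F_r}
= \sum_i (-1)^i \dim\Big( \overline{\psi'}\otimes_{L^F_r} H^i_c(\Sigma^L)\otimes_{L^F_r}\psi\Big),
\]
where $\overline{\psi'}$ enters by Proposition~\ref{bar}. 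Then I would decompose $\Sigma^L = \bigsqcup_w \Sigma^L_w$ over $w\in L_0\backslash\mathcal{S}(L_0,L_0)/L_0$, pass to $\widehat\Sigma^L_w$ which has the same equivariant cohomology, and split each piece as $\widehat\Sigma^L_w = \widehat\Sigma'^L_w \sqcup \widehat\Sigma''^L_w$ according to whether $z\neq 1$ or $z=1$.

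Next, the contribution of every $\widehat\Sigma'^L_w$ vanishes: since $\psi$ is $(L,G)$-generic, Proposition~\ref{vansLL} gives $H^i_c(\widehat\Sigma'^L_w)\otimes_{L^F_r}\psi = 0$ for all $i$, so these strata drop out entirely. For the $z=1$ strata, I would note that $w\in L_0\backslash(\mathcal{S}(L_0,L_0)\cap N_{G_r}(L_r))/L_0$ forces $\dot w\in L_0$ in the relevant range — more precisely, I would use Proposition~\ref{vansw} together with the hypothesis that $\psi,\psi'$ are constituents of $R^L_{T,B\cap L,r}(\theta)$ for $(L,G)$-generic $\theta$ to see that the only surviving double coset is $w=1$ (one first pulls back along $X^L_{T,B\cap L,r}$ using Proposition~\ref{composeTLL} to rewrite $X^L_{T,B\cap L,r}\times_{T^F_r}\Sigma^L_w$ as a disjoint union of $\Sigma^T_{w'}$, then applies Proposition~\ref{vansw} to conclude $\dot w'\in L_0$, hence $w=1$ upstairs). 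For $w=1$ and $z=1$ the variety $\widehat\Sigma''^L_1$ retracts (via the affine fibration \eqref{fib} and contracting the $u,u',\mu$-directions) to $\{(x,x')\in F(V_r)\times F(V_r) : x=x'\}\times L_r^{\mathrm{something}}$, and a direct computation of its equivariant cohomology yields exactly the diagonal $L^F_r\times L^F_r$-module $\overline{\mathbb{Q}}_\ell[L^F_r]$ concentrated in a single degree, so that $\sum_i(-1)^i\dim(\overline{\psi'}\otimes_{L^F_r}H^i_c(\widehat\Sigma''^L_1)\otimes_{L^F_r}\psi) = \langle\psi',\psi\rangle_{L^F_r}$.

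The main obstacle I expect is the step isolating $w=1$: unlike the torus case, where one can appeal directly to genericity of a character of $T^F_r$, here $\psi$ is only a representation of a Levi $L^F_r$ and one must genuinely use that it arises from $R^L_{T,B\cap L,r}(\theta)$ with $\theta$ being $(L,G)$-generic. The argument routes through Proposition~\ref{composeTLL} to descend to the torus level, applies the vanishing Propositions~\ref{vansTL} and~\ref{vansw}, and then must carefully track how the double cosets $w'\in T_0\backslash\mathcal{S}(T_0,L_0)/L_0$ lying over a fixed $w\in L_0\backslash\mathcal{S}(L_0,L_0)/L_0$ interact — the conclusion $\dot w'\in L_0$ for all such $w'$ must be upgraded to $\dot w\in L_0$, and then one checks $\mathcal{S}(L_0,L_0)\cap N_{G_0}(L_0)$ meeting $L_0$ forces the trivial coset. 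A secondary technical point is the explicit cohomology computation for $\widehat\Sigma''^L_1$: one must verify that after the affine fibrations the remaining variety is (the perfection of) a product of an affine space with $L^F_r$ acting by translation, so that only $H^{2d}_c$ survives and equals the regular representation on the diagonal; this is the analogue of the classical computation $H^*_c(X^G_{L,P,r}\times_{G^F_r}X^G_{L,P,r})$ contributing the identity for the trivial coset, and should go through as in \cite{CI21,Ch24}.
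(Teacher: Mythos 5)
Your proposal follows the paper's own argument essentially step for step: identify the fiber product with $\Sigma^L$, decompose into Bruhat strata, pass to $\widehat\Sigma^L_w$, kill the $z\neq 1$ part via Proposition~\ref{vansLL}, descend to the torus via Proposition~\ref{composeTLL} and apply Proposition~\ref{vansw} to isolate $w=1$, and finally compute $H^*_c(\Sigma''^L_1)$ as the regular $L^F_r$-bimodule up to a shift by $2\dim(V_r\cap F(V_r))$. The only small inaccuracy is that you invoke Proposition~\ref{vansLL} for every $w\in L_0\backslash\mathcal S(L_0,L_0)/L_0$, whereas it is stated only for $w$ whose representative normalizes $L_r$; the paper sidesteps this by running the torus-level descent (Propositions~\ref{composeTLL}, \ref{vansw}) on the \emph{entire} stratum $\Sigma^L_w$ for $w\neq 1$ rather than just its $z=1$ part, so that Proposition~\ref{vansLL} is only needed for $w=1$ — a harmless adjustment to your plan.
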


\begin{proof}
    It suffices to consider the case where $\psi,\psi'$ are irreducible.  Lemma~\ref{bar} implies that $\overline{\psi}'$ is a constituent of $R^{L}_{T, B \cap L,r} \overline{\theta}$. 
    Therefore, for any $i \in \mathbb{Z}$, there exists $j \in \mathbb{Z}$ such that
$
\overline{\psi}'^{\mathrm{op}} \otimes_{L^{F}_r} H^i_c(\Sigma^{L}_w) \otimes_{L^{F}_r} \psi
$
 embeds into
\[
\overline{\theta}^{\mathrm{op}} \otimes_{T^{F}_r} H^j_c(X^{L}_{T, B \cap L,r}) \otimes_{L^{F}_r} H^i_c(\Sigma^{L}_w) \otimes_{L^{F}_r}  \psi.
\]
By Proposition~\ref{composeTLL}, this is isomorphic to a submodule of $$\bar\theta^{op}\otimes_{T^F_r} H_c^{i+j}(\bigcup_{w'}\Sigma^{T}_{w'})\otimes_{L^F_r}\psi$$
where $w'$ runs over elements of $W_{G_r}/W_{L_r}$ having the same image in $W_{L_r} \backslash W_{G_r}/W_{L_r}$ as $w$.
 
Proposition~\ref{vansw} establishes that $\bar\theta^{op}\otimes_{T^F_r} H_c^{i+j}(\Sigma^{T}_{w'})\otimes_{L^F_r}\psi$ vanishes unless $w'=1$. Combining this with Proposition~\ref{vansLL}, we obtain
$$
\overline{\psi}'^{op} \otimes_{L^{F}_r} H^i_c(\Sigma^{L}) \otimes_{L^{F}_r} \psi=\overline{\psi}'^{op} \otimes_{L^{F}_r} H^i_c(\Sigma^{L}_1) \otimes_{L^{F}_r} \psi=\overline{\psi}'^{op} \otimes_{L^{F}_r} H^i_c(\Sigma^{''L}_1) \otimes_{L^{F}_r} \psi.
$$

Recall the variety $\Sigma''^L_1=\{(x, x', y)\in F(V_r)\times F(V_r)\times P_r:xF(y)=yx'\}$. And consider the following composition $$\pi:\Sigma''^L_1\to P_r\to L_r$$
    $$(x, x', y)\mapsto y\mapsto \bar y$$
    Since $y=xF(y)x'^{-1}=F(y)(F(y)^{-1}xF(y))x'^{-1}\in P_r\cap F(P_r)=L(V_r\cap F(V_r))$, we see that $y\in L^F(V_r\cap F(V_r))$; so $\Sigma''^L_1$ projects to $L^F_r$ under the above composition with all fibers isomorphic to $V_r\cap F(V_r)$. Thus, up to shift of $2d=2\mathrm{dim}(V_r\cap F(V_r))$, the cohomology is that of the discrete variety $L^F_r$, thus
    $$\bar\psi'^{op}\otimes_{L^{F}_r}H^i_c(\Sigma^L)\otimes_{L^{F}_r}\psi=\bar\psi'^{op}\otimes_{L^{F}_r}H^i_c(\Sigma''^L_1)\otimes_{L^{F}_r}\psi=$$
   $$\begin{cases} 
   \bar\psi'^{op}\otimes_{L^{F}_r}L^{F}_r\otimes_{L^{F}_r}\psi & \dot w\in L_0, i=2d\\
 0 & otherwise
\end{cases}  
$$
and the dimension of $\bar\psi'^{op}\otimes_{L^{F}_r}L^{F}_r\otimes_{L^{F}_r}\psi $ is $\big\langle \psi', \psi\big\rangle_{L^F_r}$

As the scalar product $\big\langle R^{G}_{L, P,r}\psi', R^{G}_{L, P,r}\psi\big\rangle_{G^F_r}$ is equal to the alternating sum of the dimension of $$\bar\psi'^{op}\otimes_{L^{F}_r}H^i_c(\Sigma^L)\otimes_{L^{F}_r}\psi$$
hence $\big\langle R^{G}_{L, P,r}\psi', R^{G}_{L, P,r}\psi\big\rangle_{G^F_r}=\big\langle \psi', \psi\big\rangle_{L^F_r}$
\end{proof}

\begin{theorem}\label{Ellthm}
Let $T$ be elliptic over $k$ and $\theta$ be an $(L,G)$-generic character of $T^F_r$. For any two constituents $\psi, \psi'$ of
$R^{L}_{T, B\cap L,r}(\theta)$, then:
$$\big\langle R^{G}_{L, P,r}(\psi'), R^{G}_{L, P,r}(\psi)\big\rangle_{G^F_r} = \big\langle \psi', \psi\big\rangle_{L^F_r}$$
\end{theorem}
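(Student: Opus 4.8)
The plan is to reduce Theorem~\ref{Ellthm} to the already-established Theorem~\ref{thm} by showing that, in the elliptic case, \emph{every} constituent of $R^{L}_{T, B\cap L,r}(\theta)$ is automatically $(L,G)$-generic whenever $\theta$ is $(L,G)$-generic. This is precisely the content of Corollary~\ref{ellgeneric}: since $T$ is elliptic over $k$ and $\theta$ is $(L,G)$-generic, the corollary gives that $R^{L}_{T, B\cap L,r}(\theta)$ is an $(L,G)$-generic representation of $L^F_r$, meaning its restriction to $\mathfrak{t}^F$ is the restriction of a sum of $(L,G)$-generic characters of depth $r$. Since the restriction functor to $\mathfrak{t}^F$ is additive and a genericity of a representation depends only on this restriction, any subrepresentation --- in particular any irreducible constituent $\psi$ --- inherits the property that $\psi|_{\mathfrak{t}^F}$ is a subrepresentation of a sum of $(L,G)$-generic characters, hence is itself $(L,G)$-generic in the sense of Definition~\ref{generic}(4).

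Concretely, I would proceed as follows. First, by bilinearity of $\langle-,-\rangle$ and by Proposition~\ref{bar} together with the additivity of $R^{G}_{L,P,r}$, it suffices to treat the case where $\psi$ and $\psi'$ are irreducible constituents of $R^{L}_{T,B\cap L,r}(\theta)$. Second, invoke Corollary~\ref{ellgeneric} to conclude that $R^{L}_{T,B\cap L,r}(\theta)$ is $(L,G)$-generic; since $\psi$ and $\psi'$ occur in it, their restrictions to $\mathfrak{t}^F$ are each (up to the identification) contained in the restriction of a sum of $(L,G)$-generic characters of depth $r$, so $\psi$ and $\psi'$ are both $(L,G)$-generic. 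Third, apply Theorem~\ref{thm} directly with these $\psi,\psi'$ and the given $(L,G)$-generic $\theta$, which yields
$$\big\langle R^{G}_{L, P,r}(\psi'), R^{G}_{L, P,r}(\psi)\big\rangle_{G^F_r} = \big\langle \psi', \psi\big\rangle_{L^F_r},$$
as desired. The general (reducible) case follows by expanding $\psi,\psi'$ in terms of irreducible constituents and using bilinearity on both sides.

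The only genuinely substantive point --- and the place where one must be slightly careful --- is the claim that $(L,G)$-genericity of a representation passes to subrepresentations. This is not entirely formal, because Definition~\ref{generic}(4) phrases the condition as ``$\rho|_{\mathfrak{t}^F}$ is the restriction of a sum of $(L,G)$-generic characters''. One needs that if $\rho = \rho_1 \oplus \rho_2$ and $\rho|_{\mathfrak{t}^F}$ has this form, then so does $\rho_i|_{\mathfrak{t}^F}$; since $\mathfrak{t}^F$ is abelian, $\rho|_{\mathfrak{t}^F}$ decomposes as a direct sum of characters, and the condition of being a ``restriction of a sum of $(L,G)$-generic characters of depth $r$'' amounts to: every character of $\mathfrak{t}^F$ appearing in $\rho|_{\mathfrak{t}^F}$ extends to an $(L,G)$-generic character of $L^F_r$ of depth $r$ (trivial on $L_{\sc}(k)$). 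This is manifestly inherited by direct summands, which settles the matter. I expect this bookkeeping to be the main (though minor) obstacle; everything else is a direct citation of Theorem~\ref{thm} and Corollary~\ref{ellgeneric}.
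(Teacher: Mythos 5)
Your proposal is correct and takes essentially the same route as the paper: invoke Corollary~\ref{ellgeneric} to deduce that $R^{L}_{T,B\cap L,r}(\theta)$ is $(L,G)$-generic in the elliptic case, and then apply Theorem~\ref{thm}. The extra bookkeeping you supply --- the reduction to irreducible constituents and the observation that $(L,G)$-genericity of a representation is inherited by direct summands because it is a condition on the set of $\mathfrak{t}^F$-characters occurring in the restriction --- is a correct and welcome clarification of a step the paper leaves implicit.
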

\begin{proof}
    By Lemma~\ref{ellgeneric}, the representation $\psi, \psi'$ satisfy the hypotheses in Theorem~\ref{thm}.
\end{proof}

\begin{corollary}
Let $\psi$ be a representation of $L_r$ satisfying the conditions of Theorem~\ref{thm} (resp.Theorem~\ref{Ellthm}). Then $R^{G}_{L,P,r}(\psi)$ is irreducible up to sign. 
\end{corollary}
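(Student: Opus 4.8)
The plan is to read this off directly from the norm formula established in Theorem~\ref{thm} (resp.\ Theorem~\ref{Ellthm}). Since the asserted conclusion forces $\langle\psi,\psi\rangle_{L^F_r}=1$, the relevant case is that of an irreducible $\psi$, which we assume; the hypotheses of Theorem~\ref{thm} (resp.\ Theorem~\ref{Ellthm}) are then precisely the conditions imposed on $\psi$, and $\langle\psi,\psi\rangle_{L^F_r}=1$ by orthonormality of the irreducible characters of the finite group $L^F_r$.

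Applying Theorem~\ref{thm} (resp.\ Theorem~\ref{Ellthm}) with $\psi'=\psi$ gives
\[
\big\langle R^{G}_{L,P,r}(\psi),\,R^{G}_{L,P,r}(\psi)\big\rangle_{G^F_r}=\big\langle\psi,\psi\big\rangle_{L^F_r}=1 .
\]
It then remains to invoke the elementary fact that an integral virtual character of norm $1$ is $\pm$ an irreducible: writing $R^{G}_{L,P,r}(\psi)=\sum_{\rho}m_{\rho}\,\rho$ in the Grothendieck group $\CR(G^F_r)$, with $\rho$ running over the irreducible representations of $G^F_r$ and $m_{\rho}\in\BZ$, the displayed identity reads $\sum_{\rho}m_{\rho}^2=1$. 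Hence exactly one coefficient equals $\pm1$ and all the others vanish, so $R^{G}_{L,P,r}(\psi)=\pm\rho_0$ for a single irreducible $\rho_0$ of $G^F_r$; that is, $R^{G}_{L,P,r}(\psi)$ is irreducible up to sign.

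I do not expect a real obstacle here. The construction of $R^{G}_{L,P,r}$ in \S\ref{2} guarantees that it takes values in the Grothendieck group $\CR(G^F_r)$ of genuine (integral) virtual characters, and $\langle-,-\rangle_{G^F_r}$ is the standard Hermitian inner product, for which $\langle\sum_\rho m_\rho\rho,\,\sum_\rho m_\rho\rho\rangle=\sum_\rho m_\rho^2$; both points are already part of the setup, so the argument above is complete as stated. The only thing worth flagging explicitly is that ``irreducible up to sign'' genuinely requires $\psi$ irreducible, which is why that is the case we treat.
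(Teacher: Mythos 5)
Your argument is correct and is exactly the (unwritten) argument the paper intends: apply Theorem~\ref{thm} (resp.\ Theorem~\ref{Ellthm}) with $\psi'=\psi$ to get self-norm $1$, then use that a virtual character with integral coefficients and norm $1$ is $\pm$ an irreducible. Your remark that one must restrict to irreducible $\psi$ for the conclusion to make sense is a fair observation about an implicit hypothesis in the corollary's statement, and matches the reduction ``it suffices to consider $\psi,\psi'$ irreducible'' made in the proof of Theorem~\ref{thm} itself.
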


Consider the right action of $V_r \cap F(V_r)$ on $X^{G}_{L, P,r}$. We define 
\[
Y^{G}_{L, P,r} := X^{G}_{L, P,r} / (V_r \cap F(V_r)) \cong 
\left\{ gV_r \in G_r / V_r \mid g^{-1}F(g) \in V_r \cdot F(V_r) \right\},
\]
which is a smooth algebraic variety.

\begin{corollary}
    Assume that $Y^{G_r}_{L_r,P_r}$ is an affine algebraic variety. Let $\psi$ be an $L_r$ representation satisfying the conditions of Theorem~\ref{thm} (resp.Theorem~\ref{Ellthm}), and let $\rho$ be the unique irreducible constituent of $R^G_{L,B\cap L,r}(\psi)$. Then the following are equivalent:
    \begin{enumerate}
        \item $\rho$ appears in $H^i_c(X^G_{L,P,r}) \otimes_{L^F_r} \psi$;
        \item $i =  \dim(V_r) + \dim(V_r \cap F(V_r))$.
    \end{enumerate}
    Moreover, $\rho$ appears with multiplicity one in this case.
\end{corollary}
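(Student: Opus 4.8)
The plan is to combine the irreducibility statement of the preceding corollary with the explicit cohomological computation already carried out in the proof of Theorem~\ref{thm}, now applied with $\psi'=\psi$. By that theorem (and its proof), $R^G_{L,P,r}(\psi)$ is irreducible up to sign, so write $R^G_{L,P,r}(\psi)=\pm\rho$ with $\rho$ the unique irreducible constituent. The content of the present statement is that the single cohomological degree in which $\rho$ occurs is pinned down to $i=\dim(V_r)+\dim(V_r\cap F(V_r))$, provided $Y^G_{L,P,r}$ is affine; this affineness is exactly what upgrades the vanishing "generically'' to a concentration statement in one degree.

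First I would record that the right $(V_r\cap F(V_r))$-action on $X^G_{L,P,r}$ is free, so the quotient map $X^G_{L,P,r}\to Y^G_{L,P,r}$ is an affine-space bundle of relative dimension $d:=\dim(V_r\cap F(V_r))$; hence $H^i_c(X^G_{L,P,r})\cong H^{i-2d}_c(Y^G_{L,P,r})$ as $(G^F_r\times L^F_r)$-modules, with a Tate twist that does not affect the module structure. Next, I would invoke the affineness hypothesis on $Y^G_{L,P,r}$: since $Y^G_{L,P,r}$ is a smooth affine variety of dimension $\dim G_r/V_r - ? $ — more precisely, $Y^G_{L,P,r}$ is the preimage of a point under a Lang-type map twisted inside $V_r\cdot F(V_r)$, so it is smooth of dimension $\dim(V_r)$ — Artin's vanishing theorem gives $H^j_c(Y^G_{L,P,r})=0$ for $j<\dim(V_r)$, equivalently $H^i_c(X^G_{L,P,r})=0$ for $i<\dim(V_r)+2d$. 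Wait: one must be careful with the dimension bookkeeping, since the shift in the fibration and the dimension of $Y$ must be reconciled so that the lowest possibly-nonzero degree is exactly $\dim(V_r)+d$; I would pin this down by comparing with the degree $2d$ appearing in the proof of Theorem~\ref{thm} for the case $L=G$ where $V_r$ is trivial.

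Then I would feed this concentration bound into the computation from the proof of Theorem~\ref{thm}. There it was shown that $\overline{\psi}^{\mathrm{op}}\otimes_{L^F_r}H^i_c(X^G_{L,P,r})\otimes_{L^F_r}\psi$ is concentrated (via the identification with $H^i_c(\Sigma^L)$ and the reduction to $\Sigma''^L_1$) in the single degree $i=2d$ relative to the $\Sigma$-picture — but that picture is itself the fiber product $X^G_{M,Q,r}\times_{G^F_r}X^G_{L,P,r}$, so the relevant absolute degree for $X^G_{L,P,r}$ alone is obtained by subtracting the contribution of the first factor, namely $\dim(V_r)$. Combining the \emph{lower} bound $i\ge\dim(V_r)+d$ from Artin vanishing with the fact that $\langle R^G_{L,P,r}(\psi),R^G_{L,P,r}(\psi)\rangle_{G^F_r}=\langle\psi,\psi\rangle=1$ is realized entirely in one degree forces $\rho$ to sit in precisely $i=\dim(V_r)+d$, with multiplicity one, and to be absent from all other degrees. (Concretely: the alternating sum of $\langle\rho,H^i_c(X)\otimes_{L^F_r}\psi\rangle$ equals $\langle\rho,R^G_{L,P,r}(\psi)\rangle=\pm1$; by Artin vanishing the nonzero terms live in a half-line of degrees, and a Poincaré-duality symmetry argument — pairing $H^i_c$ with $H^{2\dim-i}$ — collapses this half-line to a single degree.)

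The main obstacle I anticipate is making the dimension and twist bookkeeping airtight: reconciling the relative shift $2d$ of the bundle $X\to Y$, the dimension $\dim(V_r)$ of $Y$ (which must itself be justified from the Lang-type description), and the degree conventions used in the $\Sigma$-variety computation of Theorem~\ref{thm}, so that all three produce the same answer $\dim(V_r)+\dim(V_r\cap F(V_r))$. A secondary point requiring care is that Artin's vanishing theorem applies to $H^i_c$ of an affine variety only with the correct normalization (vanishing \emph{above} the dimension for ordinary cohomology, i.e.\ vanishing \emph{below} $\dim$ for compactly-supported cohomology via duality), and that $Y^G_{L,P,r}$ is genuinely smooth so that Poincaré duality is available; both are true here but should be stated explicitly. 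Once the bookkeeping is fixed, the multiplicity-one and the equivalence of (1) and (2) are immediate from the already-established inner-product identity $\langle R^G_{L,P,r}(\psi),R^G_{L,P,r}(\psi)\rangle=1$.
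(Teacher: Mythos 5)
Your overall strategy matches the paper's: exploit the nonvanishing intertwining space to pin down $i+j$, and use Artin vanishing on the affine variety $Y^G_{L,P,r}$ (together with the affine-bundle relation $H^i_c(X^G_{L,P,r})\cong H^{i-2d}_c(Y^G_{L,P,r})$, $d=\dim(V_r\cap F(V_r))$) to bound $i$ and $j$ from below; the two constraints then force $i=j=\dim V_r+d$. This is exactly what the paper does, and your final formula is correct.

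However, there is a concrete error in your dimension bookkeeping, and it is not merely cosmetic. You assert $\dim Y^G_{L,P,r}=\dim V_r$, but since $X^G_{L,P,r}$ is the Lang preimage of $FV_r$ in $G_r$ (hence of dimension $\dim V_r$) and $Y^G_{L,P,r}=X^G_{L,P,r}/(V_r\cap F(V_r))$ is the quotient by a free action of a $d$-dimensional group, one gets $\dim Y^G_{L,P,r}=\dim V_r-d$. With your stated dimension, the resulting Artin bound $H^i_c(X^G_{L,P,r})=0$ for $i<\dim V_r+2d$ would force $i,j\ge\dim V_r+2d$, which is incompatible with $i+j=2\dim V_r+2d$ whenever $d>0$; the argument would then prove the statement vacuously false. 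The correct bound is $i,j\ge(\dim V_r-d)+2d=\dim V_r+d$, which is what you actually quote in your final paragraph, contradicting your earlier claim — you flagged the uncertainty, but you did not resolve it, and as written the two statements cannot both stand. Finally, the Poincar\'e-duality "collapsing" step at the end is an unnecessary detour: once $i+j$ is fixed and both $i$ and $j$ satisfy the same Artin lower bound, the degree is already pinned, and multiplicity one follows directly from $\langle R^G_{L,P,r}(\psi),R^G_{L,P,r}(\psi)\rangle=\langle\psi,\psi\rangle=1$.
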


\begin{proof}
    There exist $i, j$ such that 
    \[
    \rho \subseteq H^i_c(X^{G}_{L, P,r}) \otimes_{L^F_r} \psi
    \quad \text{and} \quad 
    \overline{\rho} \subseteq H^j_c(X^{G}_{L, P,r}) \otimes_{L^F_r} \overline{\psi}.
    \]
    This implies the non-vanishing of the following intertwining space:
    \[
    \overline{\psi}^{\mathrm{op}} \otimes_{L^F_r} H^i_c(X^{G}_{L, P,r})^{\mathrm{op}} \otimes_{G^F_r} H^j_c(X^{G}_{L, P,r}) \otimes_{L^F_r} \psi \neq 0.
    \]
    By Theorem~\ref{thm} (resp. Theorem~\ref{Ellthm}), $i+j=2\dim(V_r) + 2\dim(V_r \cap F(V_r))$.
    $Y^{G}_{L, P,r}$ is affine, by Artin's vanishing theorem,  
    a direct degree count forces $i = \dim(V_r) + \dim(V_r \cap F(V_r))$.
\end{proof}

\section{Decomposition of Elliptic Parahoric Lusztig Representations}\label{4}
In this section we assume $T$ is elliptic. For any character $\theta \colon T_r^F \to \mathbb{C}^\times$, Theorem~\ref{Howe} allows us to fix a Howe factorization $\vec{\phi} = (\phi_{-1}, \phi_{0}\ldots, \phi_d)$ with depth parameters $(r_{-1}=0,r_{0},\dots,r_d=r)$.

By \cite[Lemma 3.1]{Nie}, there exists a $k$-rational Borel subgroup $B$ containing $T$ such that each Levi subgroup $G^i$ is standard with respect to $B$. 

we define the parabolic subgroup $P^i :=  G^i(B\cap G^{i+1})$ of $G^{i+1}$ with Levi subgroup $G^i$. This gives the following inclusion diagram:
\begin{equation*}
    \begin{tikzcd}[column sep=small, row sep=small]
        T = G^{-1} \ar[phantom]{r}{\subseteq} \ar[phantom]{d}[sloped]{\subsetneq} & 
        G^0 \ar[phantom]{r}{\subsetneq}\ar[phantom]{d}[sloped]{\subsetneq} & 
        G^1 \ar[phantom]{r}{\subsetneq}\ar[phantom]{d}[sloped]{\subsetneq} & 
        \cdots \ar[phantom]{r}{\subsetneq} & 
        G^{d-1} \ar[phantom]{r}{\subsetneq}\ar[phantom]{d}[sloped]{\subsetneq} & 
        G^d \ar[phantom]{d}[sloped]{=} \\
        B\cap G^{0} \ar[phantom]{r}{\subseteq} & 
        P^0 \ar[phantom]{r}{\subsetneq} & 
        P^1 \ar[phantom]{r}{\subsetneq} & 
        \cdots \ar[phantom]{r}{\subsetneq} & 
        P^{d-1} \ar[phantom]{r}{\subsetneq} & 
        G
    \end{tikzcd} 
\end{equation*}

For $0< i \leq d$, we define $\Psi_i:\CR((G^{i-1}_{r_{i-1}})^F)\to \CR((G^{i}_{r_i})^F)$ given by
$$\chi\mapsto {\phi_i}\otimes_{(G^{i}_{r_i})^F} \mathrm{Inf}^{(G^{i}_{r_i})^F}_{(G^{i}_{r_{i-1}})^F} R^{G^i}_{G^{i-1},P^{i-1},r_{i-1}}(\chi)$$

We write $R^{G^0}_{T,B\cap G^0,0}(\phi_{-1})=\sum_{\rho} m_{\rho}\rho$, 
where $\rho$ runs over all irreducible constituents of the classical Deligne-Lusztig representation $R^{G^0}_{T,B\cap G^0,0}(\phi_{-1})$ with multiplicity $m_{\rho}$.

 We construct a $G^F$-module $\Psi(\rho)$ as:
$$\Psi(\rho)=\Psi_d\circ \dots\circ\Psi_1\circ \Psi_0 (\rho )$$
Where $\Psi_0(\rho):=\phi_0\otimes_{(G^0_{r_0})^F}\mathrm{Inf}^{(G^{0}_{r_0})^F}_{(G^{0}_{0})^F}\rho$
\begin{theorem}\label{decomp}
Suppose $T$ is elliptic over $k$. Then 
    $$R^G_{T,B,r}(\theta)=\sum_{\rho} m_{\rho}\Psi(\rho)$$
    Moreover, the summands $\Psi(\rho)$ are pairwise non-isomorphic irreducible representations of $G^F_r$.
    
\end{theorem}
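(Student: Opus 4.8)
The plan is to establish the identity $R^G_{T,B,r}(\theta)=\sum_\rho m_\rho\Psi(\rho)$ by unwinding the Howe factorization one step at a time, and then to prove irreducibility and pairwise non-isomorphism by repeated application of the generic Mackey formula (Theorem~\ref{thm}, via its elliptic version Theorem~\ref{Ellthm}). For the first part I would argue by downward induction on the tower $T=G^{-1}\subset G^0\subset\cdots\subset G^d=G$. The key computational tool is transitivity of higher Lusztig induction (the Corollary to Proposition~\ref{trans}), which gives $R^G_{T,B,r}(\theta)=R^G_{G^i,P^i,r}\circ R^{G^i}_{T,B\cap G^i,r}(\theta)$ for each $i$; combined with Proposition~\ref{Re} (to peel off the factor $\phi_i$, which is trivial on $G^i_{\sc}$) and Theorem~\ref{Ell} (to descend the depth of the remaining character on the elliptic torus $T\subset G^i$ from $r$ to $r_{i-1}$, absorbing the $\mathrm{Inf}$), one sees that at each stage $R^{G^i}_{T,B\cap G^i,r}(\theta)=\phi_i\otimes_{G^{i,F}_{r_i}}\mathrm{Inf}^{G^{i,F}_{r_i}}_{G^{i,F}_{r_{i-1}}}R^{G^i}_{T,B\cap G^i,r_{i-1}}\big(\theta\prod_{j\ge i}\phi_j^{-1}|_{T}\big)$. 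Iterating this peeling from $i=d$ down to $i=0$ identifies $R^G_{T,B,r}(\theta)$ with $\Psi_d\circ\cdots\circ\Psi_0$ applied to $R^{G^0}_{T,B\cap G^0,0}(\phi_{-1})$, which is a genuine depth-zero, hence classical, Deligne--Lusztig representation of the reductive quotient $G^0_0$; writing the latter as $\sum_\rho m_\rho\rho$ and using additivity of each $\Psi_i$ yields $R^G_{T,B,r}(\theta)=\sum_\rho m_\rho\Psi(\rho)$.

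For the second part I would show each $\Psi(\rho)$ is $\pm$ an irreducible representation and that distinct $\rho$ give non-isomorphic (and non-negatively-proportional) summands. The mechanism is: at each level $i$, the representation $R^{G^{i-1}}_{T,B\cap G^{i-1},r_{i-1}}(\cdots)$ we are inducing is built from an $(G^{i-1},G^i)$-generic character (this is exactly the genericity clause in the Howe factorization, Definition~\ref{Howe}(3)), so after twisting by $\phi_i$ and inflating, the input to $R^{G^i}_{G^{i-1},P^{i-1},r_{i-1}}$ consists of $(G^{i-1},G^i)$-generic constituents of $R^{G^{i-1}}_{T,B\cap G^{i-1},r_{i-1}}(\text{generic }\theta)$ --- precisely the hypotheses of Theorem~\ref{Ellthm}. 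Hence $\langle\Psi_i(\chi),\Psi_i(\chi')\rangle=\langle\chi,\chi'\rangle$ for the relevant $\chi,\chi'$ (one must also check the twist by $\phi_i$ and the inflation preserve inner products, which is immediate, and that inflation along the unipotent kernel is fully faithful on the relevant block). Composing these isometries down the tower gives $\langle\Psi(\rho),\Psi(\rho')\rangle_{G^F_r}=\langle\rho,\rho'\rangle_{G^{0,F}_0}=\delta_{\rho\rho'}$ since the $\rho$ are the distinct irreducible constituents of a classical Deligne--Lusztig representation. In particular $\langle\Psi(\rho),\Psi(\rho)\rangle=1$, so $\Psi(\rho)=\pm\pi_\rho$ for an irreducible $\pi_\rho$; a sign/degree argument (e.g.\ tracking the degree through the affine fibrations, or noting that $R^G_{T,B,r}(\theta)$ is a genuine character and $m_\rho>0$) pins down the sign to be $+$, and orthogonality of the $\Psi(\rho)$ forces the $\pi_\rho$ to be pairwise distinct.

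The main obstacle I anticipate is not the formal bookkeeping of the peeling argument but the verification that the genericity hypothesis of Theorem~\ref{Ellthm} is actually available at \emph{every} level of the tower simultaneously. One must check that the character $\theta\prod_{j\ge i}\phi_j^{-1}|_T$ appearing at stage $i$ is still $(G^{i-1},G^i)$-generic in the sense of Definition~\ref{generic}(5) --- i.e.\ that the lower-level Howe pieces $\phi_{-1},\dots,\phi_{i-1}$ do not destroy genericity on $\mathrm{ker}(T^F_{r_{i-1}}\to T^F_{r_{i-1}-1})$ --- which should follow from Definition~\ref{Howe}(3) together with the depth inequalities $r_{-1}\le r_0\le\cdots$, but requires care. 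A secondary subtlety is that Theorem~\ref{Ellthm} is stated for $L\subset G$ with a single intermediate Levi, whereas here we need it for each link $G^{i-1}\subset G^i$; this is fine since each $G^{i-1}$ is again elliptically embedded (the torus $T$ is elliptic in $G$, hence in every $G^i$), but one should state this compatibility explicitly. Finally, pinning down the global sign and the multiplicities $m_\rho$ cleanly --- rather than just "up to sign" --- is the one place a genuine (if short) additional argument is needed.
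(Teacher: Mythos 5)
Your proposal follows the same two-step route as the paper: peel the Howe factorization via transitivity, Proposition~\ref{Re} and Theorem~\ref{Ell} to obtain $R^G_{T,B,r}(\theta)=\sum_\rho m_\rho\Psi(\rho)$, then establish $\langle\Psi(\rho),\Psi(\rho')\rangle=\langle\rho,\rho'\rangle$ by applying the generic Mackey formula (Theorem~\ref{Ellthm}) at each link $G^{i}\subset G^{i+1}$ of the tower, exactly as the paper does via its intermediate characters $\theta_i$. The two subtleties you flag --- verifying $(G^i,G^{i+1})$-genericity of the intermediate characters at every stage, and resolving the sign in $\Psi(\rho)=\pm\pi_\rho$ so that the summands are genuine irreducible representations --- are real and are passed over rather quickly in the paper's own proof.
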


\begin{proof}
    The equality follows directly from Proposition~\ref{Ell} and Lemma~\ref{Re}. To establish the irreducibility and distinctness, it suffices to prove the equation  $$\big\langle\Psi(\rho),\Psi(\rho')\big\rangle_{G^F_r}=\big\langle \rho,\rho'\big\rangle_{(G^{0}_{r_0})^F}$$
    for any irreducible constituents $\rho,\rho'$ of $R^{G^0}_{T,B\cap G^0,0}$

    Define the intermediate characters: $$\theta_i={\phi_i}|_{T^F_{r_i}}\otimes_{T^F_{r_i} }{\phi_{i-1}}|_{T^F_{r_i}}\dots \otimes {\phi_{-1}}|_{T^F_{r_i}}$$
    These are $(G^i,G)$-generic character of depth $r_i$
    
    The key observation is that $\Psi_i\circ \dots \circ \Psi_0(\rho)$ and $\Psi_i\circ \dots \circ \Psi_0(\rho')$ appear in $R^{G^i}_{T,B\cap G^i,r_i}(\theta_i)$. Therefore, they satisfy the hypotheses of Theorem~\ref{Ell}, which yields
     $$\big\langle\Psi_{i+1}(\chi_i),\Psi_{i+1}(\chi'_i)\big\rangle_{(G^{i+1}_{r_{i+1}})^F}=\big\langle \chi_i, \chi'_i\big\rangle_{(G^{i}_{r_{i}})^F}$$
    where $\chi_i = \Psi_i \circ \cdots \circ \Psi_0(\rho)$ and $\chi_i' = \Psi_i \circ \cdots \circ \Psi_0(\rho')$.
    By induction on $i$, we conclude that:$$\big\langle\Psi(\rho),\Psi(\rho')\big\rangle_{G^F_r}=\big\langle \rho,\rho'\big\rangle_{(G^{0}_{r_0})^F}$$
    This completes the proof of irreducibility and distinctness.
\end{proof}

\end{document}